\documentclass[oneside, a4paper,11pt,reqno]{amsart}
\textheight=23cm
\textwidth=16cm
\voffset=-1.5cm
\hoffset=-1.5cm
\parskip=2mm

\usepackage{amssymb,amsmath,am  sthm,dsfont}
\usepackage{graphics,graphicx}
\usepackage[T1]{fontenc}
\usepackage{color}

\newcommand{\Z}{{Z\!\!\!Z}}

\newtheorem{hypo}{Hypothesis}

\newtheorem{prop}[hypo]{Proposition}

\newtheorem{thm}[hypo]{Theorem}

\newtheorem{lem}[hypo]{Lemma}

\def\PP{\mathbb{P}}
\def\RR{\mathbb{R}}
\def\ZZ{\mathbb{Z}}

\def\EE{\mathbb{E}}
\def\NN{\mathbb{N}}
\def\e{\varepsilon}
\def\Bb{\overline{B}}
\def\Cb{\overline{C}}
\def\Lb{\overline{L}}
\def\Vb{\overline{V}}
\def\Sb{\overline{S}}
\def\Xb{\overline{X}}

\newcommand {\floor}[1] {\left\lfloor {#1} \right\rfloor}

\title[RWRE in a stratified oriented medium]
{Random walk in random environment in a two-dimensional stratified medium with
orientations}
\date{\today}

\author{Alexis Devulder}
\address{Universit\'e de Versailles Saint-Quentin-en-Yvelines, Laboratoire de Math\'ematiques de
Versailles, CNRS UMR 8100, Bât. Fermat,
45 avenue des Etats-Unis,
78035 Versailles Cedex, France.}
\email{devulder@math.uvsq.fr }

\author{Fran\c{c}oise P\`ene}
\address{Universit\'e Europ\'eenne de Bretagne, Universit\'e de Brest,
D\'epartement de Math\'ematiques, 29238 Brest cedex, France}
\email{francoise.pene@univ-brest.fr}

\subjclass[2010]{60F17; 60G52; 60K37}
\keywords{random walk on randomly oriented lattices, random walk in random environment,
random walk in random scenery, functional limit theorem, transience.\\
This research was supported by the french ANR project MEMEMO2 2010 BLAN 0125.}

\begin{document}

\begin{abstract}
We consider a model of random walk in ${\mathbb Z}^2$ with (fixed or random) orientation of
the horizontal lines (layers) and with non constant iid probability to stay on these lines.
We prove the transience of the walk for any fixed orientations under general
hypotheses. This contrasts with the model of Campanino and Petritis \cite{CP},
in which probabilities to stay on these lines are all equal.
We also establish a result of convergence in distribution for this walk
with suitable normalizations under more precise assumptions. In particular, our model proves to be,
in many cases, even more superdiffusive than the random walks introduced by Campanino
and Petritis.
\end{abstract}
\maketitle

\section{Introduction}

In this paper we consider a random walk $(M_n)_n$ starting from $0$ on an oriented version
of ${\mathbb Z}^2$.
Let $\e=(\e_k)_{k\in \mathbb Z}$ be a sequence of random variables with values in $\{-1,1\}$
and joint distribution $\mu$.
%We make no assumption on this sequence.
%We suppose that all the even horizontal lines are oriented
%to the right, all the uneven are oriented to the left.
We assume that the $k^{th}$ horizontal line is entirely oriented to the right if $\e_k=1$,
and to the left if $\e_k=-1$.
We suppose
that the probabilities $p_k$ to stay on the $k^{th}$ horizontal line are given
by a sequence of independent identically distributed random variables
$\omega=(p_k)_{k\in\mathbb Z}$ (with values in $(0,1)$ and joint distribution $\eta$)
and that the probability to
go up or down are equal.

More precisely, given $\e$ and $\omega$, the process $(M_n=(M_n^{(1)},M_n^{(2)}))_n$
is a Markov chain satisfying $M_0=(0,0)$ with transition probabilities given by~:
$${\mathbb P}^{\e,\omega}( M_{n+1}-M_n=(\e_{M_n^{(2)}},0) \left\vert M_0,...,M_n\right. )=
         p_{M_n^{(2)}} $$
and
$$\forall y\in\{-1,1\},\ \ \ \
   {\mathbb P}^{\e,\omega}
  ( M_{n+1}-M_n=(0,y)\left\vert M_0,...,M_n\right. )=\frac{1-
    p_{M_n^{(2)}} }2.$$

\begin{center}
\includegraphics[scale=0.43,angle=90,trim=100mm 0mm 30mm 0mm,clip]{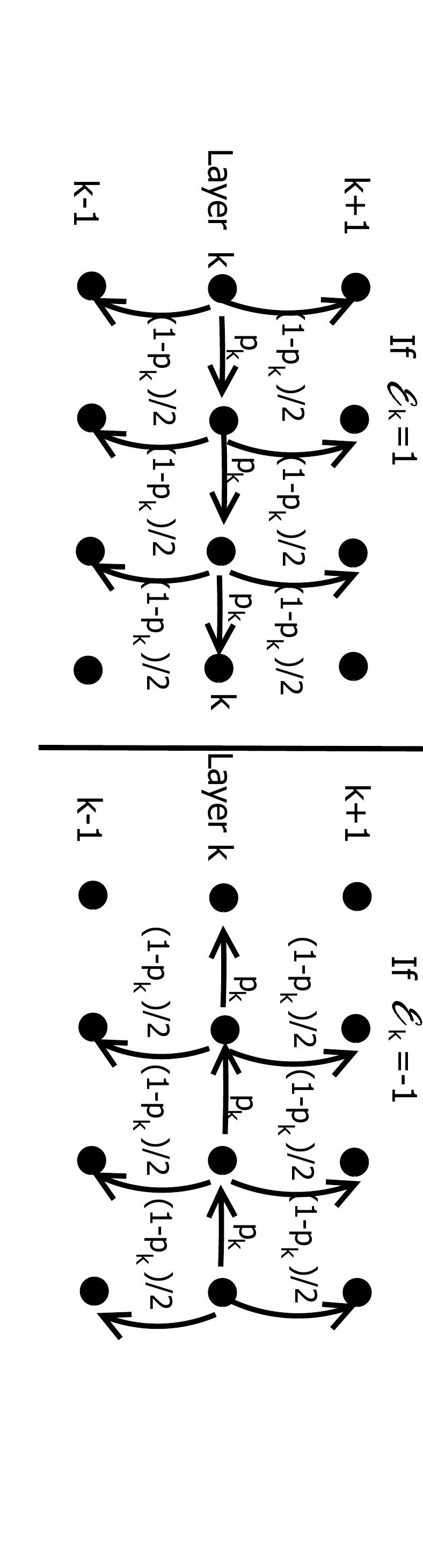}
\end{center}

We also define %${\mathbb P}^\omega$ and
the annealed probability $\mathbb P$ as follows:
$$
%{\mathbb P}^\omega(\cdot):=\int{\mathbb P}^{\e,\omega}(\cdot)\, d\mu(\e)\ \
%    \mbox{and}\ \
  \mathbb P(.):=\int\mathbb P^{\e,\omega}(.)d\eta(\omega)\, d\mu(\e).$$
We denote by $\mathbb E$ and ${\mathbb E}^{\e,\omega}$ the expectations with regard to
$\mathbb P$ and $\mathbb P^{\e,\omega}$ respectively.

Our model corresponds to a random walk in a two dimensional stratified medium
with oriented horizontal layers and
with random probability to stay on each layer.

The model with $p_k=1/2$ and with the $\varepsilon_k's$ iid and centered can be seen
as a discrete version of a model introduced by G. Matheron and G. de Marsily
in \cite{MdM} to modelize transport in a stratified porus medium.
This discrete model appears in \cite{BGKPR} to simulate the Matheron and de Marsily model.
It has also been introduced, separately, by mathematicians with motivations linked to quantum field theory
or propagation on large networks (see respectively \cite{CP} and \cite{CP2} and references therein).

In \cite{CP}, M. Campanino and D. Petritis proved that, when the $p_k$'s are all equal, the
behavior of the walk $(M_n)_n$ depends on the choice of the orientations $(\e_k)_k$.
First, they prove that the walk is recurrent
when $\e_k=(-1)^k$ (i.e. when the horizontal even lines are oriented
to the right and the uneven to the left). Second, they prove that the walk is
almost surely transient when the $\e_k's$ are iid and centered.
These results have been recently improved in \cite{CP2}.
Let us mention that extensions of this second model can be found in
\cite{GLN1,F1}, and that its Martin boundary is computed in \cite{deLoynes}.

In order to take into account the different nature of the successive layers of a stratified porus medium, it is natural to study the case where the $p_k$'s are random instead of being all equal. In this paper, we prove that taking the $p_k$'s random and i.i.d. can induce very different behaviors for the random walk.

First, we prove that under general hypotheses, the random walk is transient for every deterministic or random orientations, contrarily to the results obtained by Campanino and Petritis in \cite{CP} and \cite{CP2} for their model. Hence, even very small random perturbations of their (constant) $p_k$'s transform their recurrent walks into transient ones.

Second, it was proved in \cite{GLN2} that when the $p_k$'s are all equal, the random walk is superdiffusive, and that the horizontal position at time $n$ is, asymptotically,  of order $n^{3/4}$. This was conjectured in \cite{MdM} and was one main motivation for the introduction of this model. We prove that, depending on the law of $p_0$, our model can be even more superdiffusive, with horizontal position at time $n$ of order $n^\delta$, where $\delta$ can take all the values in $[3/4,1)$.

More precisely, our results are the following. We start by stating our theorem about transience.
\begin{thm}\label{sec:thm1}
Let $(p_k)_k$ be a sequence of independent identically distributed random variables.
Suppose here that $p_0$ is non-constant and that
${\mathbb E}[(1-p_0)^{-\alpha}]<\infty$ (for some $\alpha>1$).
Then, for every deterministic or random sequence $(\e_k)_k$, the random walk $(M_n)_n$ is transient
for almost every $\omega$.
\end{thm}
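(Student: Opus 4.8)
The plan is to exhibit, inside the horizontal coordinate, a transient one–dimensional random walk. First observe that the vertical coordinate $(M_n^{(2)})_n$ is on its own a lazy simple symmetric random walk on $\ZZ$ (the probabilities to go up and down are always equal), so it is recurrent and visits the line $\{y=0\}$ infinitely often. I decompose the trajectory according to the successive sojourns of $(M_n^{(2)})_n$ on line $0$: let $A_j$ be the value of $M^{(1)}$ at the beginning of the $j$-th such sojourn, with $A_0=0$. Writing $\rho_k:=p_k/(1-p_k)$ for the mean number of horizontal steps made during one visit to line $k$, the increment $D:=A_{j+1}-A_j$ is the total horizontal displacement accumulated between two consecutive visits to line $0$, namely $\e_0 G^{(0)}_j+\sum_{k\neq 0}\e_k\big(\sum_{i}G^{(i)}_{k}\big)$, where the $G$'s are geometric variables (of parameter $1-p_\cdot$) counting horizontal steps. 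The key structural point is that, the environment $\omega$ being frozen, the successive excursions of $(M_n^{(2)})_n$ away from $0$ are i.i.d.; hence under $\PP^{\e,\omega}$ the process $(A_j)_j$ is an ordinary random walk on $\ZZ$ with i.i.d.\ increments distributed as $D$.

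I then reduce the transience of $(M_n)_n$ to that of $(A_j)_j$. During the $j$-th sojourn on line $0$ the walk sweeps the horizontal segment between $A_j$ and $A_j+\e_0 G^{(0)}_j$ monotonically, so it meets the origin at most once, and only if that segment contains $0$. Thus the number of visits of $(M_n)_n$ to $(0,0)$ is at most $\sum_{j\ge 0}\mathbf 1\{0\text{ lies between }A_j\text{ and }A_j+\e_0G_j^{(0)}\}$, whose $\PP^{\e,\omega}$–expectation, using that $G_j^{(0)}$ is geometric of parameter $1-p_0$ and independent of $A_j$, is at most $\sum_{j}\EE^{\e,\omega}\big[p_0^{|A_j|}\big]\le \sum_{a\in\ZZ}p_0^{|a|}\,g(0,a)$, where $g$ is the Green function of $(A_j)_j$. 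As soon as $(A_j)_j$ is transient one has $g(0,a)\le g(0,0)<\infty$, so this sum is finite, $(M_n)_n$ visits the origin finitely often $\PP^{\e,\omega}$–a.s., and is transient. It therefore suffices to prove that, for a.e.\ $\omega$ and every $\e$, the walk $(A_j)_j$ is transient.

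The heart of the matter is then a quenched tail estimate for $D$: I would show that for a.e.\ $\omega$ and every orientation $\e$ one has $\PP^{\e,\omega}(|D|>t)\asymp t^{-\beta}$ with some $\beta<1$, placing $(A_j)_j$ in the domain of attraction of a stable law of index $<1$, which makes it transient. Both hypotheses enter in the lower bound. An excursion of the vertical walk reaching a far level $L$ has probability of order $1/L$ and, away from line $0$, accumulates a horizontal displacement of order $\sum_{0<|k|\le L}\e_k\rho_k\,\ell_k$ (the line-$0$ term being only a light-tailed geometric contribution), where the excursion local times $\ell_k$ are of order $L$. When the orientations do not cancel this is of order $L^2$, giving $\beta=\tfrac12$; the delicate case is a cancelling orientation such as $\e_k=(-1)^k$, where one must use that $p_0$ is \emph{non-constant}: the scenery $(\e_k\rho_k)_k$ then genuinely fluctuates, the alternating cancellation is destroyed, and the displacement becomes of polynomial order $L^{1+1/\alpha}$ (or $L^{3/2}$ when $\EE[\rho_0^2]<\infty$), again with exponent $\beta<1$. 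The assumption $\EE[(1-p_0)^{-\alpha}]<\infty$ with $\alpha>1$ ensures $\EE[\rho_0]<\infty$ and that $\rho_0$ has tail index $\alpha$, which is precisely what converts these heuristics into matching upper and lower bounds for the tail of $D$ and keeps $\beta<1$. The passage from annealed to quenched is handled through the i.i.d.\ structure of $(\rho_k)_k$ by ergodicity and Borel--Cantelli, and a random $\e$ independent of $\omega$ is treated by Fubini once the result is known for every deterministic $\e$.

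The main obstacle is exactly this quenched tail estimate for $D$, \emph{uniformly over all orientation sequences}. For biased or generic $\e$ the displacement is easily large, but for adversarial cancelling orientations one must quantify how strongly the non-constancy of $p_0$ breaks the cancellation, and control the competition between the polynomially many moderate layers and the rare very fast layers (those with $p_k$ close to $1$, whose magnitude is governed by $\alpha$ and which are heavy-tailed when $\alpha<2$). Producing an exponent $\beta$ provably below $1$ in every case, with bounds valid for almost every environment, is the technical core of the argument.
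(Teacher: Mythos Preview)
Your reduction to the one-dimensional walk $(A_j)_j$ is correct and clean: under $\PP^{\e,\omega}$ the excursions of the vertical coordinate away from level $0$ are i.i.d.\ (the environment being frozen), so $(A_j)_j$ is a genuine random walk, and the Green-function bound $\sum_a p_0^{|a|}g(0,a)\le g(0,0)(1+p_0)/(1-p_0)<\infty$ does show that transience of $(A_j)_j$ forces transience of $(M_n)_n$. (Minor slip: $(M_n^{(2)})_n$ is not a lazy \emph{simple} random walk, since the holding probability $p_k$ depends on the level; it is, however, a symmetric birth--death chain, hence recurrent, so your conclusion stands.)

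The genuine gap is the transience of $(A_j)_j$ itself, which you leave as a heuristic and rightly flag as the main obstacle. The asserted quenched two-sided tail $\PP^{\e,\omega}(|D|>t)\asymp t^{-\beta}$ is stronger than the hypotheses warrant: the theorem assumes only $\EE[(1-p_0)^{-\alpha}]<\infty$, not that $p_0/(1-p_0)$ lies in any domain of attraction, so the tail of $D$ need not be regularly varying. A one-sided quenched lower bound valid for an \emph{arbitrary} orientation sequence (your dichotomy ``non-cancelling versus alternating'' does not cover general $\e$) would already be substantial, and even then transience of a non-symmetric walk does not follow from a tail lower bound alone without control of the characteristic function near the origin. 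The paper sidesteps all of this by estimating $\sum_n\PP(S_{2n}=0,\ X_{2n}\le 0\le X_{2n+1})$ directly: writing $X_{2n}=D_{2n}+\overline S_{2n}$ with $D_{2n}=\sum_y\e_y\frac{p_y}{1-p_y}N_{2n}(y)$, the key lemma shows $|D_{2n}|>n^{3/4-\delta}$ with high (annealed in $\omega$) probability \emph{uniformly in $\e$}, because only $|\tilde\chi(\e_y N_{2n}(y)t)|=|\tilde\chi(N_{2n}(y)t)|$ enters, and the non-constancy of $p_0$ yields $|\tilde\chi(u)|\le e^{-\tilde c u^2}$ near $0$. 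This absorption of the signs $\e_y$ through the modulus of the characteristic function of $p_0/(1-p_0)$ is exactly the device that handles all orientations at once, and it is what your sketch is missing.
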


%Notice, in particular, that this result is true for any deterministic sequence of orientations $(\e_k)_k$.

We now give a functional theorem under more precise hypotheses. In particular,
we will assume that $\frac{p_0}{1-p_0}$ is integrable and that
the distribution of $\frac{p_0}{1-p_0}-{\mathbb E}\left[\frac{p_0}{1-p_0}\right]$ belongs to the
normal domain of attraction of a strictly stable distribution $G_\beta$
of index $\beta\in(1,2]$,
which means that
\begin{equation}\label{HypotheseAttraction1}
\PP\left(n^{-1/\beta}\sum_{k=1}^n \left(\frac{p_k}{1-p_k}-\EE\left[\frac{p_0}{1-p_0}
     \right]\right)\leq x\right)\to_{n\to+\infty}G_{\beta}(x),\qquad x\in\RR,
\end{equation}
the characteristic function $\zeta_\beta$ of $G_\beta$ being of the form
\begin{equation}\label{HypotheseAttraction2}
\zeta_\beta(\theta):=\exp[-|\theta|^{\beta}(A_1+i A_2\textnormal{sgn}(\theta))],\qquad \theta\in\RR,
\end{equation}
with $A_1>0$ and $|A_1^{-1} A_2|\leq |\tan(\pi \beta/2)|$.
Notice that this is possible iff $A_2=A_1\tan(\pi \beta/2)$
(since $\frac{p_0}{1-p_0}\geq 0$ a.s., see e.g. \cite[thm 2.6.7]{IL}).

If $\beta\in(1,2)$, we consider two independent right continuous stable processes
$(Z_x,\ x\geq 0)$ and $(Z_{-x},\ x\geq 0)$,  with characteristic functions
$$\EE(e^{i \theta Z_t})  =\exp[-A_1|t||\theta|^{\beta}], \qquad t\in\RR, \theta\in\RR.
$$
If $\beta=2$, we denote by $Z$ a two-sided standard Brownian motion.
We also introduce a standard Brownian motion $(B_t,\ t\ge 0)$,
and denote by $(L_t(x),\ x\in\RR,\  t\geq 0)$ the jointly continuous version
of its local time. We assume that $Z$ and $B$ are defined in the same probability space and are independent processes. We now define, as in \cite{KestenSpitzer}, the continuous process
$$\Delta_t:=\int_{\mathbb R}L_t(x)\, dZ_x, \qquad t\geq 0.$$
We prove the following result.

\begin{thm}\label{sec:TL}
Let $(p_k)_k$ be a sequence of independent identically distributed random variables
with values in $(0,1)$.
Suppose here that
%$p_0$ is non-constant,
${\mathbb E}\left[\frac{p_0}{1-p_0}\right]<\infty$
and that the distribution of
$\frac{p_0}{1-p_0}-{\mathbb E}\left[\frac{p_0}{1-p_0}\right]$
belongs to the normal domain of attraction of a strictly
stable distribution of index $\beta\in(1,2]$
(i.e. that we have \eqref{HypotheseAttraction1} and \eqref{HypotheseAttraction2}).

We also assume that $(\e_k)_k$ satisfies one of the following hypotheses~:
\begin{itemize}
\item[(a)] for every $k$, $\e_k= (-1)^k$,
\item[(b)] $(\e_k)_k$ is a sequence of independent identically distributed centered random
variables
with values in $\{\pm 1\}$; $(\e_k)_k$ is independent of $(p_k)_k$.
\end{itemize}
Then, setting $\delta:=\frac{1}{2}+\frac{1}{2\beta}$, the sequence of processes
$$\left(\left(n^{-\delta} M_{\floor {nt}}^{(1)},n^{-1/2} M_{\floor{nt}}^{(2)}\right)_{t\ge 0}\right)_n$$
converges in distribution under the probability $\PP$ (in the space of Skorokhod ${\mathcal D}([0;+\infty),{\mathbb
  R}^2)$) to
$(\gamma^{-\delta}\sigma\Delta_t,\gamma^{-1/2}B_t)_{t\ge 0}$ with $\gamma:=1+{\mathbb E}
\left[\frac{p_0}{1-p_0}\right]$ and with~:
\begin{itemize}
\item[*] $\sigma=\left(Var
\left(\frac{p_0}{1-p_0}\right)\right)^{1/2}$ in case {\it (a)} with $\beta=2$,
\item[*]$\sigma=\left({\mathbb E}\left[\left(\frac{p_0}{1-p_0}\right)^2\right]\right)^{1/2}$
in case {\it (b)} with $\beta=2$,
\item[*] $\sigma=1$ in cases (a) or (b) with $\beta\in(1,2)$.
\end{itemize}
\end{thm}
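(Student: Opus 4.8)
The plan is to reduce the walk to a random walk in random scenery driven by the embedded sequence of vertical moves, and then to invoke a functional Kesten--Spitzer theorem. First I would record the crucial structural fact that, conditionally on moving vertically, the walk goes up or down with probability $1/2$ each regardless of the environment; hence the \emph{embedded vertical walk} $(\tilde Y_m)_m$, defined by $\tilde Y_m:=M^{(2)}_{\tau_m}$ where $\tau_1<\tau_2<\cdots$ are the successive times of vertical moves, is a simple symmetric random walk on $\ZZ$ that is \emph{independent} of the environment $(p_k,\e_k)_k$. Writing $G_j$ for the number of horizontal steps performed during the $j$-th sojourn (so that, given $\tilde Y$ and $\omega$, the $G_j$ are independent and $G_j$ is geometric of parameter $1-p_{\tilde Y_{j-1}}$), and $\ell_m(k):=\#\{0\le j\le m-1:\ \tilde Y_j=k\}$ for the local time of $\tilde Y$, I would obtain the two basic identities
\[
M^{(1)}_{\tau_m}=\sum_{k\in\ZZ}\e_k\!\!\sum_{j:\,\tilde Y_{j-1}=k}\!\!G_j,\qquad
\tau_m=m+\sum_{j=1}^{m}G_j .
\]
Taking conditional expectations of the $G_j$ suggests the approximations $M^{(1)}_{\tau_m}\approx\sum_k\xi_k\,\ell_m(k)$ with scenery $\xi_k:=\e_k\frac{p_k}{1-p_k}$, and $\tau_m\approx \gamma\, m$ with $\gamma=1+\EE[\frac{p_0}{1-p_0}]=\EE[\frac1{1-p_0}]$.

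Next I would establish the two limit inputs. For the scenery, I would show that $(n^{-1/\beta}\sum_{k=1}^{\floor{nx}}\xi_k)_{x}$ converges to the two-sided process $Z$ of the statement. In case (b) the $\xi_k$ are i.i.d., symmetric (as $\e_k$ is centered and independent of $p_k$) and, by the domain-of-attraction hypothesis on $\frac{p_0}{1-p_0}$, lie in the normal domain of attraction of the symmetric stable law of index $\beta$; in case (a) the alternating signs $(-1)^k$ turn the maximally skewed attraction of $\frac{p_0}{1-p_0}$ into a symmetric one, the bounded drift coming from centering being negligible. This is where the constant $\sigma$ is read off: for $\beta=2$ one gets $\sigma^2=\var(\frac{p_0}{1-p_0})$ in case (a) and $\sigma^2=\EE[(\frac{p_0}{1-p_0})^2]$ in case (b), while for $\beta\in(1,2)$ the stable normalization is matched exactly by $A_1$, so $\sigma=1$. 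For the walk, $n^{-1/2}\tilde Y_{\floor{n\cdot}}$ converges to a Brownian motion $B$ by Donsker, and its rescaled local times converge to the jointly continuous local time $(L_t(x))$ of $B$. Since $\tilde Y$ is independent of the scenery, I would then apply the functional Kesten--Spitzer theorem to get the \emph{joint} convergence
\[
\Big(n^{-1/2}\tilde Y_{\floor{ns}},\ n^{-\delta}\!\sum_{k\in\ZZ}\xi_k\,\ell_{\floor{ns}}(k)\Big)_{s\ge0}
\ \Longrightarrow\ \big(B_s,\ \sigma\,\Delta_s\big)_{s\ge0},
\]
with $\delta=\frac12+\frac1{2\beta}$ the Kesten--Spitzer exponent for a diffusive walk and scenery of index $\beta$; here $\Delta_s=\int_\RR L_s(x)\,dZ_x$ is built from the local time of the very same $B$ and from $Z$, which is independent of $B$ precisely because the scenery is independent of $\tilde Y$.

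The third and most delicate step is the error control that justifies replacing $M^{(1)}_{\tau_m}$ by the scenery sum and $\tau_m$ by $\gamma m$. Writing $M^{(1)}_{\tau_m}-\sum_k\xi_k\ell_m(k)=\sum_k\e_k\sum_{j:\tilde Y_{j-1}=k}(G_j-\frac{p_k}{1-p_k})$, the right-hand side is, conditionally on $(\tilde Y,\omega)$, a sum of centered independent terms coming from the \emph{resampling} of the geometric sojourns; I would bound its contribution at scale $n^\delta$ by a second-moment (martingale) estimate over the $\approx\sqrt m$ visited sites. The main obstacle is precisely that the sojourns are heavy-tailed: when $\beta\in(1,2)$ the variable $\frac{p_0}{1-p_0}$ has infinite variance, so a geometric $G$ of parameter $1-p_0$ has an even heavier tail and a plain $L^2$ bound fails. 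I expect to circumvent this by a truncation argument (splitting each $G_j$ at a level $n^{a}$, treating the bounded part by the variance bound and the exceedances by a first-moment/maximal estimate based on the heavy tail of $\frac{p_0}{1-p_0}$ of index $\beta$ supplied by \eqref{HypotheseAttraction1}), showing that both the resampling fluctuation and a single leftover sojourn are $o_{\PP}(n^\delta)$ and $o_\PP(n^{1/2})$ respectively. The same truncation controls $\sum_{j\le m}G_j-\EE[\frac{p_0}{1-p_0}]\,m$, yielding the functional law of large numbers $\tau_{\floor{ns}}/n\to \gamma s$ and hence, for $N_n:=\max\{m:\tau_m\le n\}$, the time change $N_{\floor{ns}}/n\to s/\gamma$, uniformly on compacts in probability.

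Finally I would combine everything through the time change. Since $M^{(2)}_{\floor{ns}}=\tilde Y_{N_{\floor{ns}}}$ and $M^{(1)}_{\floor{ns}}=M^{(1)}_{\tau_{N_{\floor{ns}}}}+O(G_{N_{\floor{ns}}+1})$, composing the joint convergence above with the asymptotically linear time change $N_{\floor{ns}}/n\to s/\gamma$ (composition being continuous at the limiting continuous paths) gives
\[
\Big(n^{-\delta}M^{(1)}_{\floor{ns}},\ n^{-1/2}M^{(2)}_{\floor{ns}}\Big)_{s\ge0}
\ \Longrightarrow\ \big(\gamma^{-\delta}\sigma\,\Delta_s,\ \gamma^{-1/2}B_s\big)_{s\ge0}
\]
in $\mathcal D([0,\infty),\RR^2)$, where the factors $\gamma^{-\delta}$ and $\gamma^{-1/2}$ come from rescaling the sojourn index $m\approx n/\gamma$ back to real time together with the scaling properties of $\Delta$ and $B$. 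Tightness in the Skorokhod space would follow from the tightness already provided by the Kesten--Spitzer and Donsker inputs together with the uniform smallness of the error terms, completing the proof. The heavy-tail fluctuation bound is the crux; the remaining ingredients are adaptations of Kesten--Spitzer and standard Skorokhod-continuity arguments.
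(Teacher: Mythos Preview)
Your proposal follows essentially the same architecture as the paper: decompose $M_{T_m}^{(1)}$ into a scenery sum $\sum_y \e_y\frac{p_y}{1-p_y}N_m(y)$ plus a ``resampling'' fluctuation from the centered geometric sojourns, apply a Kesten--Spitzer functional theorem to the scenery sum jointly with the vertical walk, and then undo the time change $T_m\approx\gamma m$. Two points deserve comment.

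First, the resampling error is easier than you expect. You worry that a plain $L^2$ bound fails when $\beta<2$ because $1/(1-p_0)^2$ has infinite mean, and propose truncation of the $G_j$. The paper instead conditions on $(S,\omega)$: the conditional variance of the resampling martingale after $N$ steps is $\sum_{k<N}\frac{p_{S_k}}{(1-p_{S_k})^2}\le\sum_{k<N}\frac1{(1-p_{S_k})^2}$, itself a random-walk-in-random-scenery quantity. A Markov inequality with exponent $\alpha/2$ for any $\alpha<\beta$, using only $\EE[(1-p_0)^{-\alpha}]<\infty$ together with the local-time bounds on the event $A_N$, shows that this sum is $\le N^{1/2+1/\beta+v}$ with probability tending to $1$; after this, Doob's maximal inequality gives $o_{\PP}(N^\delta)$ directly since $2\delta=1+1/\beta$. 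No truncation is needed, and the same argument handles the $o_{\PP}(n)$ control of $T_n-\gamma n$.

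Second, in case~(a) your scenery $\xi_y=(-1)^y\frac{p_y}{1-p_y}$ is independent but not identically distributed (even and odd sites have different laws), so the Kesten--Spitzer theorem you invoke does not apply as stated; saying that the alternating signs ``turn the skewed attraction into a symmetric one'' addresses only the marginal tail, not the applicability of the functional theorem. For $\beta\in(1,2)$ the paper repairs this by exploiting the parity of the simple walk: since $(-1)^{S_k}=(-1)^k$, one has $\sum_y(-1)^y a_y N_n(y)-\sum_y(a_{2y}-a_{2y-1})N_n(2y)=-\sum_y a_{2y-1}\bigl(N_n(2y)-N_n(2y-1)\bigr)$, and a local-time increment estimate (moments of $N_n(y)-N_n(y-1)$ of order $n^{1/4}$) shows this is $o_{\PP}(n^\delta)$. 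The paired scenery $\tilde\e_y:=\frac{p_{2y}}{1-p_{2y}}-\frac{p_{2y-1}}{1-p_{2y-1}}$ \emph{is} i.i.d.\ and symmetric, and Kesten--Spitzer applies to it over the half-walk $(S_{2k}/2)_k$. For $\beta=2$ the paper instead centers the scenery and runs the characteristic-function computation of Guillotin-Plantard--Le~Ny directly, using that $\lambda(\e_y t)$ and $\lambda(t)$ agree to second order at $0$. Your one-line treatment of case~(a) glosses over this non-trivial reduction.
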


We remind that $\frac{p_0}{1-p_0}$ has a finite variance if $\beta=2$ (see e.g. \cite[Thm 2.6.6.]{IL}), hence $\sigma$ is finite in all cases.

%We notice in particular that the random walk $(M_n)_n$ can be even more superdiffusive
%than the model with $(p_k)_k$ constant (see \cite{GLN2}),
%since $\delta$ can take all the values in $[3/4,1)$.

The proof of this second result is based on the proof
of the functional limit theorem established by N.
Guillotin and A. Le Ny \cite{GLN2} for the walk of
M. Campanino and D. Petritis (with $(p_k)_k$ constant
and $(\e_k)_k$ centered, independent and identically distributed).

It may be possible that the transience remains true for every non degenerate distribution of the $p_k$'s on $(0,1)$. Indeed, roughly speaking, taking the $p_k$'s closer to one should make the random walk even more transient; however this is just an intuition and not a mathematical evidence.
We prove our Theorem \ref{sec:thm1} under a very general moment condition, which covers all the cases of our Theorem \ref{sec:TL}. In particular, the most superdiffusive cases, with $\delta>3/4$, are obtained when the support of  $1/(1-p_0)$ is not compact.

The proof of our first result is built from the proof of \cite[Thm 1.8]{CP}
with many adaptations. The idea is to prove that, when $(\e_k)_{k\in\Z}$ is a fixed sequence of orientations, that is when $\mu$ is a Dirac measure,
\begin{equation}
\sum_{k\ge 1} {\mathbb P} (M_k=(0,0))<+\infty.
\end{equation}
In the model we consider here, contrarily to the models envisaged in \cite{CP},
the second coordinate of $(M_n)_n$ is not a random walk but it is a random walk
in a random environment, since the probability
to stay on a horizontal line depends on the line, which complicates the model.
Even if
a central limit theorem and functional limit theorem have been established
in \cite{HWW} and in \cite{H} for $M_n^{(2)}$, the local limit theorem for
$M_n^{(2)}$ has not already been proved, to the extent of our knowledge.
Moreover, in Theorem \ref{sec:thm1} we do not assume that the distribution of
$\frac{p_0}{1-p_0}$ belongs
to the domain of attraction of a stable distribution. For these reasons, it does not seam
simple to make a precise estimation
of ${\mathbb P}(M_n=(0,0))$ as it has been done in \cite{BFFN}.
We also mention that the random walk $(M_n)_n$ is not reversible.

It will be useful to observe that under $\PP^{\e,\omega}$ and $\PP$, $(M_{T_n}^{(2)})_n$
is a simple random walk $(S_n)_n$ on $\mathbb Z$,
where the $T_n$'s are the times
of vertical displacement~:
$$T_0:=0; \ \forall n\ge 1,\ \ T_{n}:=\inf\{k>T_{n-1}\ :\ M_k^{(2)}\ne M_{k-1}^{(2)}\}. $$
We will use several times the fact that there exists $M>0$ such that, for
every $n\ge 1$, we have ${\mathbb P}(S_n=0)\le M n^{-\frac 12}$.
Now, let us write $X_n$ the first coordinate of $M_{T_n}$.
We observe that
$$X_{n+1}-X_{n}= \e_{S_n}\xi_n,$$
where $\xi_n:=T_{n+1}-T_n-1$ corresponds to the duration of the stay
on the horizontal line $S_n$ after the $n$-th change
of line.
Moreover, given $\omega=(p_k)_{k\in\mathbb Z}$, $\e=(\e_k)_{k\in\Z}$ and $S=(S_k)_k$,
the $\xi_k$'s are independent and with distribution given by
${\mathbb P}^{\e,\omega}(\xi_k=m|S)=(1-p_{S_k})p_{S_k}^m$ for every
$k\ge 0$ and $m\ge 0$.
With these notations, we have
$$X_{n}=\sum_{k=0}^{n-1}\e_{S_k}\xi_{k}.$$
%$$X_{2n}=\sum_{k=0}^{n-1}\left(\xi_{S_{2k}}^{(2k)}- \xi_{S_{2k+1}}^{(2k+1)}\right).$$

This representation of $(M_{T_{n}})_n$ will be very useful in the proof of both  the results.

\section{Estimate of the variance}
To point out the difference between our model and the model with $(p_k)_k$ constant
considered by M. Campanino and D. Petritis in \cite{CP}, we start by estimating
the variance of $X_{2n}$ under the probability $\PP$ for these two models
in the particular case when $\e_k=(-1)^k$ for every $k\in\mathbb Z$
and when $(1-p_0)^{-1}$ is square integrable.
\begin{prop}\label{sec:variance}
Let $\e_k=(-1)^k$ for every $k\in\mathbb Z$.
\begin{enumerate}
\item If the $p_k$'s do not depend on $k$, then $Var(X_{2n})={\mathbb E}
\left[\frac {2p_0}{(1-p_0)^2}\right]n$.
\item If the $(1-p_k)^{-1}$'s are iid, square integrable with positive variance,
then there exists $C>0$ such that $Var (X_{2n})\sim_{n\rightarrow +\infty} Cn^{3/2}$.
\end{enumerate}
\end{prop}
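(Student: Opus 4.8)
The plan is to work entirely with the representation $X_{2n}=\sum_{k=0}^{2n-1}(-1)^{S_k}\xi_k$ and with the conditional law of the durations: given $\omega$ and $S=(S_k)_k$ the variables $\xi_k$ are independent and geometric, so that $\EE^{\e,\omega}[\xi_k\mid S]=\frac{p_{S_k}}{1-p_{S_k}}$ and $\EE^{\e,\omega}[\xi_k^2\mid S]=\frac{p_{S_k}(1+p_{S_k})}{(1-p_{S_k})^2}$; here $S$ is a simple symmetric random walk independent of $\omega$. First I would record that $X_{2n}$ is centered. Conditioning on $S$ and $\omega$ and then integrating, $\EE[(-1)^{S_k}\xi_k]=\EE\!\left[\tfrac{p_0}{1-p_0}\right]\EE_S[(-1)^{S_k}]$, and since each increment $Y$ of the vertical walk satisfies $\EE[(-1)^{Y}]=-1$ one gets $\EE_S[(-1)^{S_k}]=(-1)^k$; summing over $0\le k\le 2n-1$ gives $\EE[X_{2n}]=0$, so that $\var(X_{2n})=\EE[X_{2n}^2]$.

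Next I would expand $\EE[X_{2n}^2]=\sum_{k,l=0}^{2n-1}\EE[(-1)^{S_k+S_l}\xi_k\xi_l]$ and separate the diagonal $k=l$, which contributes $2n\,\EE\!\left[\tfrac{p_0(1+p_0)}{(1-p_0)^2}\right]$, from the off-diagonal terms. For $k\ne l$ conditional independence gives $\EE^{\e,\omega}[\xi_k\xi_l\mid S]=\frac{p_{S_k}}{1-p_{S_k}}\frac{p_{S_l}}{1-p_{S_l}}$, and the whole distinction between the two models sits in the $\omega$-average of this product. In case (1) the same random $p_0$ sits on every line, so this average is always $q:=\EE\!\left[\tfrac{p_0^2}{(1-p_0)^2}\right]$; in case (2) the $p_k$ are i.i.d., so it equals $m^2$ with $m:=\EE\!\left[\tfrac{p_0}{1-p_0}\right]$ when $S_k\ne S_l$ and $q$ when $S_k=S_l$. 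Using $(-1)^{S_k+S_l}=(-1)^{S_l-S_k}$ (equal to $1$ on $\{S_k=S_l\}$ and of $S$-mean $(-1)^{l-k}$) together with $\sum_{k\ne l}(-1)^{k+l}=-2n$ reduces everything to elementary sums.

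In case (1) the off-diagonal part is $q\sum_{k\ne l}(-1)^{k+l}=-2nq$; combining with the diagonal, the $q$-contributions cancel and leave $2n\,\EE\!\left[\tfrac{p_0}{(1-p_0)^2}\right]=\EE\!\left[\tfrac{2p_0}{(1-p_0)^2}\right]n$, which is exactly the claimed linear variance. In case (2) the diagonal together with the baseline $m^2$-terms contributes only $O(n)$, and the surviving leading contribution is $(q-m^2)\sum_{k\ne l}\PP(S_{|l-k|}=0)$, where $q-m^2=\var\!\left(\tfrac{p_0}{1-p_0}\right)=\var\!\left((1-p_0)^{-1}\right)$, finite by square integrability and strictly positive by the positive-variance hypothesis. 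It remains to evaluate $\sum_{k\ne l}\PP(S_{|l-k|}=0)=2\sum_{d=1}^{2n-1}(2n-d)\PP(S_d=0)$ asymptotically. Using the local limit estimate $\PP(S_{2m}=0)\sim(\pi m)^{-1/2}$ and a Riemann-sum argument on $\sum_{m}(n-m)m^{-1/2}\sim\tfrac{4}{3}n^{3/2}$, this self-intersection sum is of order $n^{3/2}$, whence $\var(X_{2n})\sim C n^{3/2}$ with $C=\tfrac{16}{3\sqrt{\pi}}\var\!\left((1-p_0)^{-1}\right)>0$.

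The main obstacle is this last step: extracting the precise $n^{3/2}$ growth, and in particular a strictly positive constant, from the weighted self-intersection sum. This needs the exact leading constant in $\PP(S_{2m}=0)$ rather than only the upper bound $\PP(S_n=0)\le Mn^{-1/2}$ quoted above, together with a careful passage to the limit in $\sum_{m=1}^{n-1}(n-m)\PP(S_{2m}=0)$ checking that the $O(n)$ terms are genuinely negligible. The conceptual heart, by contrast, is the observation that in the i.i.d. model the environment produces the extra correlation $\var((1-p_0)^{-1})$ exactly when the vertical walk revisits a line, i.e. on $\{S_k=S_l\}$; it is this mechanism that turns the linear variance of the constant model into the superdiffusive order $n^{3/2}$.
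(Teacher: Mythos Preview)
Your proof is correct and follows essentially the same route as the paper: a direct second-moment computation via conditioning on $(S,\omega)$, using the i.i.d.\ structure of the $p_y$'s to reduce the cross terms to $\var\!\big(\tfrac{p_0}{1-p_0}\big)\PP(S_{|l-k|}=0)$, and then the asymptotics $\PP(S_{2k}=0)\sim c\,k^{-1/2}$ to get the $n^{3/2}$ growth. The only cosmetic difference is that the paper first groups $X_{2n}=\sum_{k=0}^{n-1}(\xi_{2k}-\xi_{2k+1})$ (using $(-1)^{S_{2k}}=1$, $(-1)^{S_{2k+1}}=-1$) before expanding the square, whereas you keep the individual terms and exploit $(-1)^{S_k}=(-1)^k$ directly; your version has the bonus of making the constant $C=\tfrac{16}{3\sqrt{\pi}}\var\!\big((1-p_0)^{-1}\big)$ explicit.
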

\begin{proof}[Proof of Proposition \ref{sec:variance}]
We observe that
$${\mathbb E}^{\e,\omega}[\xi_{k}|S]=\frac{p_{S_k}}{1-p_{S_k}}
\ \mbox{and}\ Var^{\e,\omega}(\xi_k|S)=\frac{p_{S_k}}{(1-p_{S_k})^2}.$$
Moreover, $S$ is independent of $\omega$ under $\PP$, hence ${\mathbb E}(X_{2n})=0$.
We have

$\displaystyle    Var(X_{2n})=\sum_{k,\ell=0}^{n-1}
{\mathbb E}\left[ \left( \xi_{2k}- \xi_{2k+1}  \right)
   \left( \xi_{2\ell}- \xi_{2\ell+1}  \right)
    \right]$
\begin{eqnarray*}
&=& \sum_{k=0}^{n-1}
{\mathbb E}\left[ \left( \xi_{2k}- \xi_{2k+1} \right)^2 \right]
   + 2\sum_{0\le k< \ell\le n-1}
{\mathbb E}\left[ \left( \xi_{2k}- \xi_{2k+1}  \right)
   \left( \xi_{2\ell}- \xi_{2\ell+1}  \right)
    \right]\\
&=&  \sum_{k=0}^{n-1}
{\mathbb E}\left[ \frac{p_0}{(1-p_0)^2}  +   \frac{p_1}{(1-p_1)^2}
         +\left(\frac {p_0}{1-p_0}-\frac {p_1}{1-p_1}\right)^2 \right]
   + 2\sum_{k=1}^{n-1} (n-k)
{\mathbb E}  \left[ \left( \xi_{0}- \xi_{1}  \right)
   \left( \xi_{2k}- \xi_{2k+1}  \right)
    \right]\\
&=& Cn    + 2\sum_{k=1}^{n-1} (n-k)
{\mathbb E}  \left[ \left( \frac {p_{S_{0}}}{1-p_{S_0}} - \frac { p_{S_{1}}}{1-p_{S_1}}  \right)
          \left( \frac {p_{S_{2k}}} {1-p_{S_{2k}} } - \frac { p_{S_{2k+1}}}{1-p_{S_{2k+1}}}  \right)
    \right].
\end{eqnarray*}
This gives the result in case (1). Now, to prove the result in case (2), we notice that,
since $p_y$ and $p_{y'}$ are independent as soon as $y\ne y'$, we have

$\displaystyle{\mathbb E}  \left[ \left( \frac {p_0}{1-p_0} - \frac {p_{S_1}}{1-p_{S_1}}  \right)
          \left( \frac {p_{S_{2k}} }{1-p_{S_{2k}} } - \frac{ p_{S_{2k+1}} }{1-p_{S_{2k+1}}}  \right)\right]$
\begin{eqnarray*}
&=& {\mathbb E}  \left[ \frac {p_0}{1-p_0}\frac {p_{S_{2k}}}{1-p_{S_{2k}}}+
       \frac {p_{S_1}} {1-p_{S_1}}
         \frac {p_{S_{2k+1}}}{1-p_{S_{2k+1}}} \right] -
         2 {\mathbb E}\left[\frac {p_0}{1-p_0}\right]^2\\
&=& 2\left( {\mathbb E}  \left[  \frac {p_0}{1-p_0}\frac {p_{S_{2k}}}{1-p_{S_{2k}}}\right]
     -{\mathbb E}\left[\frac {p_0}{1-p_0}\right]^2   \right)\\
&=&  2\left( {\mathbb E}  \left[  \frac {p_0^2}{(1-p_0)^2} \right]
     -{\mathbb E}\left[\frac {p_0}{1-p_0}\right]^2   \right){\mathbb P}(S_{2k}=0)\\
\end{eqnarray*}
\begin{eqnarray*}
&=&  2 Var\left( \frac {p_0}{1-p_0} \right)  {\mathbb P}(S_{2k}=0).
\end{eqnarray*}
We conclude as H. Kesten and F. Spitzer did in
\cite[p. 6]{KestenSpitzer}, using the fact that
${\mathbb P}(S_{2k}=0)\sim c k^{-1/2}$ (as $k$ goes to infinity) for some $c>0$.
\end{proof}
\section{Proof of Theorem \ref{sec:thm1} (transience)}
We come back to the general case.
It is enough to prove the result for any fixed $(\e_k)_{k}$.
Let $(\e_k)_{k\in\mathbb{Z}}$ be some fixed sequence of orientations. Hence $\mu$ is a Dirac measure on $\{-1,1\}^\Z$.
%and ${\mathbb P}^{\omega}={\mathbb P}^{\e,\omega}$.
Without any loss of generality, we assume throughout the proof of Theorem
\ref{sec:thm1} that $\e_0=1$ and $\alpha\leq 2$.
We have
$$ \sum_{k\ge 1} {\mathbb P} (M_k=(0,0))
    = \sum_{n\ge 1} {\mathbb P} (S_{2n}=0\ \mbox{and}
      \  X_{2n} \le 0\le X_{2n+1}  ).$$
Hence, to prove the transience, it is enough to prove that
\begin{equation}\label{eqSommePourTheoremeTransience}
\sum_{n\ge 1} {\mathbb P} (S_{2n}=0\ \mbox{and}
      \  X_{2n} \le 0\le X_{2n+1}  )<+\infty.
\end{equation}
This sum is divided into 8 terms which are separately estimated in
Lemmas \ref{lemA_n}, \ref{lemE_0}, \ref{lem}, \ref{lemE_2}, \ref{lemE_3},
\ref{lemB_n}, \ref{lemE_4} and \ref{lemE_1} provided $\delta_0,\delta_1,\delta_2,\delta_3$
are well chosen. One way to choose these $\delta_i$ so that they satisfy simultaneously the hypotheses of all these lemmas is given at the end of this section.

For every $y\in\mathbb Z$ and $m\in\NN$, we define $N_{m}(y):=\#\{k=0,...,m-1\ :\ S_k=y\}$.
We will use the fact that $X_{2n}=\overline{S}_{2n}+D_{2n}$ with
$$D_{2n}:=\sum_{y\in\mathbb Z}\frac{\e_y p_y} {1-p_y} N_{2n}(y) \ \ \mbox{and}\ \
  \overline{S}_{2n}:=\sum_{k=0}^{2n-1} \e_{S_k}\left(\xi_k-\frac{p_{S_k}}{1-p_{S_k}}\right) .$$
Roughly speaking, the idea of the proof is that
$X_{2n} \le 0\le X_{2n+1}$ implies that $X_{2n}$ cannot be very far away from 0,
which means
that $D_{2n}$ and $\overline{S}_{2n}$ should be of the same order, but this is false
with a large probability.
More precisely, we will prove that, with a large probability,
we have $|D_{2n}|>n^{\frac 3 4-\delta_3}$
and $|\overline{S}_{2n}|<n^{\frac 1 4+\frac 1 {2\alpha} + \upsilon}$ for small
$\delta_3>0$ and $\upsilon>0$ (see the definition of $B_n$ and the end of
the proof of Lemma \ref{lem}).
Now let us carry out carefully this idea.

Let $n\ge 1$.
Following \cite{CP}, we consider $\delta_1>0$ and $\delta_2>0$
and we define~:
$$\boxed{A_n:=\left\{ \max_{0\le k\le 2n}\vert S_{k}\vert\le n^{\frac 12 +\delta_1}
\ \ \mbox{and}\ \ \max_{y\in\mathbb Z}N_{2n}(y)<n^{\frac 12+\delta_2}\right\}}. $$
Our first lemma is standard, we give a proof for the sake of completeness.
\begin{lem}\label{lemA_n}
\begin{equation}\label{sec:EQ1}
\sum_{n\ge 0} {\mathbb P}(A_n^c)<+\infty.
\end{equation}
\end{lem}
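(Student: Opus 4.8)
The plan is to bound ${\mathbb P}(A_n^c)$ by splitting $A_n^c$ according to which of the two defining constraints fails, and to show each piece is summable in $n$. I write $R_n:=\{\max_{0\le k\le 2n}|S_k|> n^{1/2+\delta_1}\}$ and $L_n:=\{\max_{y}N_{2n}(y)\ge n^{1/2+\delta_2}\}$, so that $A_n^c\subseteq R_n\cup L_n$ and hence ${\mathbb P}(A_n^c)\le {\mathbb P}(R_n)+{\mathbb P}(L_n)$. For the local-time event I would further intersect with $R_n^c$: on $R_n^c$ the walk never leaves the interval $[-n^{1/2+\delta_1},n^{1/2+\delta_1}]$ during $[0,2n)$, so $N_{2n}(y)=0$ for every $y$ outside this interval, and therefore ${\mathbb P}(L_n)\le {\mathbb P}(R_n)+\sum_{|y|\le n^{1/2+\delta_1}}{\mathbb P}(N_{2n}(y)\ge n^{1/2+\delta_2})$. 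Thus everything reduces to two estimates: a bound on ${\mathbb P}(R_n)$, and a bound on the single-site local-time tail ${\mathbb P}(N_{2n}(y)\ge m)$ with $m=\ceil{n^{1/2+\delta_2}}$, the latter then summed over the $O(n^{1/2+\delta_1})$ relevant sites.

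For the range term I would use the reflection principle for the simple random walk $(S_k)_k$, namely ${\mathbb P}(\max_{0\le k\le 2n}S_k\ge a)\le 2{\mathbb P}(S_{2n}\ge a)$ together with the symmetric estimate for $-S$, so that ${\mathbb P}(R_n)\le 4{\mathbb P}(S_{2n}\ge n^{1/2+\delta_1})$. Since $S_{2n}$ is a sum of $2n$ i.i.d. $\{\pm1\}$ variables, Hoeffding's inequality gives ${\mathbb P}(S_{2n}\ge a)\le \exp(-a^2/(4n))$, whence ${\mathbb P}(R_n)\le 4\exp(-n^{2\delta_1}/4)$. This decays faster than any power of $n$, so $\sum_n{\mathbb P}(R_n)<\infty$.

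For the local-time tail I would count the successive visits to a fixed site $y$. Let $V_1<V_2<\cdots$ be the visit times of $(S_k)_k$ to $y$, and observe $\{N_{2n}(y)\ge j\}=\{V_j<2n\}$. Since the increments $V_{i+1}-V_i$ are, by the strong Markov property, independent copies of the first return time $\tau_y^+$ to $y$, and $V_j<2n$ forces every increment to be $<2n$, I obtain ${\mathbb P}(N_{2n}(y)\ge j)\le [{\mathbb P}^y(\tau_y^+<2n)]^{j-1}$. The decisive quantitative input is the lower bound ${\mathbb P}^y(\tau_y^+\ge 2n)\ge {\mathbb P}^y(\tau_y^+> 2n)={\mathbb P}(S_{2n}=0)\ge c\,n^{-1/2}$, combining the classical return identity for the simple walk with Stirling's formula; this yields ${\mathbb P}(N_{2n}(y)\ge j)\le (1-c\,n^{-1/2})^{j-1}\le \exp(-c(j-1)n^{-1/2})$. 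Taking $j=\ceil{n^{1/2+\delta_2}}$ makes the exponent of order $-c\,n^{\delta_2}$, so each site contributes at most $\exp(-c'n^{\delta_2})$; multiplying by the $O(n^{1/2+\delta_1})$ admissible sites still leaves a quantity decaying faster than any power of $n$. Adding the three contributions gives $\sum_n{\mathbb P}(A_n^c)<\infty$.

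The step I expect to be the main obstacle is the local-time estimate: one must convert the recurrence of $(S_k)_k$ — for which a return to $y$ within time $2n$ happens with probability tending to $1$ — into usable geometric decay in the number of visits. The key is that the per-return escape probability ${\mathbb P}^y(\tau_y^+\ge 2n)$ is of the exact order $n^{-1/2}$, so that demanding $n^{1/2+\delta_2}$ returns costs a factor $(1-c\,n^{-1/2})^{n^{1/2+\delta_2}}\approx e^{-c\,n^{\delta_2}}$. Securing this lower bound cleanly and checking that the polynomially many sites are absorbed by the stretched-exponential decay is where the care is required.
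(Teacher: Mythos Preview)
Your proof is correct but follows a different route from the paper's. The paper uses a moment method: for any $p>1$, Doob's maximal inequality together with $\EE[|S_{2n}|^p]=O(n^{p/2})$ gives $\PP(R_n)=O(n^{-p\delta_1})$, while Kesten--Spitzer's bound $\max_y\EE[N_{2n}(y)^p]=O(n^{p/2})$ combined with a union bound over $|y|\le 2n$ gives $\PP(L_n)=O(n^{1-p\delta_2})$; taking $p$ large enough makes both summable. Your argument instead exploits the specific structure of the simple symmetric walk --- reflection plus Hoeffding for $R_n$, and the classical return-time identity $\PP^0(\tau_0^+>2n)=\PP(S_{2n}=0)\asymp n^{-1/2}$ together with the strong Markov property for $L_n$ --- to obtain stretched-exponential bounds $e^{-c\,n^{2\delta_1}}$ and $n^{1/2+\delta_1}e^{-c\,n^{\delta_2}}$ directly. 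Your bounds are sharper and the argument is self-contained (no appeal to the Kesten--Spitzer local-time moment lemma), but it is tied to the simple walk; the paper's moment approach is softer and would transfer unchanged to more general increment distributions.
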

\begin{proof}
Let $p>1$.
Thanks to Doob's maximal inequality and since $\EE(|S_n|^p)=O(n^{p/2})$, we have ${\mathbb E}[\max_{0\le k\le 2n}|S_k|^p]
=O(n^{\frac p 2})$ and so, by the Chebychev inequality,
$${\mathbb P}\left(\max_{0\le k\le 2n}|S_k|>n^{\frac 12+\delta_1}\right)\le
    \frac{{\mathbb E}[\max_{0\le k\le 2n}|S_k|^p]}{n^{p(\frac 12 +\delta_1)}}
    =O(n^{-p\delta_1}). $$
According to \cite[Lem. 1]{KestenSpitzer}, we also have
$\max_y{\mathbb E}[N_{2n}(y)^p]=O(n^{\frac p 2})$ and hence
$${\mathbb P}\left(\max_y N_{2n}(y)>n^{\frac 12+\delta_2}\right)\le
   \sum_{y=-2n}^{2n} {\mathbb P}( N_{2n}(y)>n^{\frac 12+\delta_2})
    =O(n^{1-p\delta_2}).$$
The result follows by taking $p$ large enough.
\end{proof}
Let
$\delta_0>0$ and set
$$\boxed{E_0(n):=\{p_0\leq 1-1/n^{\frac{1}{2\alpha}+\delta_0}\}}.$$
We have
\begin{lem}\label{lemE_0}
\begin{equation}
\sum_{n\ge 0}\PP(S_{2n}=0, E_0(n)^c) <+\infty.
\end{equation}
\end{lem}
\begin{proof}
Indeed, since $S$ is independent of $(p_k)_{k\in\Z}$, we have
\begin{equation*}
\PP(S_{2n}=0, E_0(n)^c)\leq \frac{M}{\sqrt{n}}\PP\left(\frac{1}{1-p_0}
>n^{\frac{1}{2\alpha}+\delta_0}\right)
   \leq \frac{M}{n^{1+\delta_0\alpha}}\EE\left[\left(\frac{1}{1-p_0}\right)^{\alpha}\right]
\end{equation*}
whose sum is finite.
\end{proof}
We also consider the conditional expectation of $X_{2n}$ with respect
to $(\omega,(S_p)_p)$ which is equal to
$D_{2n}=\sum_{y\in\mathbb Z}\frac{\e_y p_y} {1-p_y} N_{2n}(y)$.
We introduce $\delta_3>0$ and
$$
\boxed{B_n:=\left\{\left\vert D_{2n}
\right\vert> n^{\frac 34-\delta_3} \right\}}.
$$
Let $c_n:=n^{\frac{1}{\alpha}(\frac{1}{2}+\delta_1)+\delta_0}$ and
$$\boxed{E_1(n):=\left\{\forall y\in\{-n^{1/2+\delta_1}, n^{1/2+\delta_1}\},\
\frac{1}{1-p_y}\leq c_n\right\}}.$$
Since $p_0\in(0,1)$ a.s.,
there exist $0<a<b<1$ such that $\PP(a<p_0<b)=:\gamma_0>0$. Let
\begin{equation*}
\Lambda_n:=
\{k\in\{0,\dots,2n-1\},\ a<p_{S_k}<b\},
\end{equation*}
$P:=\{y\in\Z, a<p_y<b\}$, and $\zeta_y:=\mathbf{1}_{\{a<p_y<b\}}$, $y\in\Z$. We have
$\#\Lambda_n=\sum_{y\in\Z}\zeta_yN_{2n}(y)=\sum_{y\in P} N_{2n}(y)$.
Let
$$\boxed{ E_2(n):=\{ \#\Lambda_n \geq \gamma_0 n \} }.$$
Define $\Vb_{2n}:=\left(\sum_{k=0}^{2n-1} \left(\xi_{k}-\frac{p_{S_k}}{1-p_{S_k}}\right)
     ^2\right)^{1/2}$ and
$$\boxed{E_3(n):=\left\{\Vb_{2n}^2\leq n^{d+\delta_0}\right\}},$$
with $d:=\frac 12+\frac 1\alpha+3\delta_0+\frac{2\delta_1}\alpha+\delta_2$ and
$$\boxed{E_4(n):=\left\{\sum_{k=0}^{2n-1}\frac{1}{(1-p_{S_k})^2}\leq n^{d}\right\}}.$$

\begin{lem}\label{lem}
If
%$\frac{\delta_1}\alpha+3\delta_0< \frac 34-\frac 1{2\alpha}$ and
$\delta_3+\frac{\delta_1}\alpha+\frac{\delta_2}2+3\delta_0<\frac 12-\frac 1{2\alpha}$,
then we have
\begin{equation} \label{sec:EQ2}
\sum_{n\in\mathbb{N}}{\mathbb P}\left(
S_{2n}=0; \
X_{2n}\le 0\le X_{2n+1},\ A_n,B_n, \cap_{i=0}^3 E_i(n) \right)<\infty.
\end{equation}
\end{lem}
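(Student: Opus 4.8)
The plan is to exploit the decomposition $X_{2n}=\overline{S}_{2n}+D_{2n}$ together with the fact that, on $B_n$, the conditional mean $D_{2n}$ is large, so that forcing $X_{2n}$ back near $0$ forces the centred fluctuation $\overline{S}_{2n}$ into its far tail, an event that is controlled by $E_3(n)$. First I would record the geometric constraint coming from the return. Since we may assume $\e_0=1$ and we are on $\{S_{2n}=0\}$, the single step is $X_{2n+1}-X_{2n}=\e_{S_{2n}}\xi_{2n}=\xi_{2n}\ge 0$, so $X_{2n}\le 0\le X_{2n+1}$ is equivalent to $-\xi_{2n}\le X_{2n}\le 0$, whence $|X_{2n}|\le\xi_{2n}$. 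Combining this with $X_{2n}=\overline{S}_{2n}+D_{2n}$ and with $B_n$, on the whole event we obtain
$$|\overline{S}_{2n}|\ge|D_{2n}|-|X_{2n}|>n^{3/4-\delta_3}-\xi_{2n}.$$
Hence the event in \eqref{sec:EQ2} is contained in the union of $\{S_{2n}=0\}\cap E_0(n)\cap\{\xi_{2n}>\tfrac12 n^{3/4-\delta_3}\}$ and $\{S_{2n}=0\}\cap E_3(n)\cap\{|\overline{S}_{2n}|>\tfrac12 n^{3/4-\delta_3}\}$; the remaining events only shrink the probability and can be dropped (in particular $E_2(n)$ is not needed here). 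I would bound these two pieces separately.

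For the first piece I would condition on $(\omega,S)$. On $\{S_{2n}=0\}$ the variable $\xi_{2n}$ is geometric of parameter $p_{S_{2n}}=p_0$, and on $E_0(n)$ we have $p_0\le 1-n^{-(1/(2\alpha)+\delta_0)}$, so $\PP^{\e,\omega}(\xi_{2n}>t\mid S)=p_0^{\lceil t\rceil}\le\exp(-t\,n^{-(1/(2\alpha)+\delta_0)})$. With $t=\tfrac12 n^{3/4-\delta_3}$ and $\PP(S_{2n}=0)\le Mn^{-1/2}$, and since $3/4-\delta_3>1/(2\alpha)+\delta_0$, this piece is at most $Mn^{-1/2}\exp(-c\,n^{3/4-\delta_3-1/(2\alpha)-\delta_0})$, which is summable.

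The second piece is the heart of the matter. Conditionally on $(\omega,S)$, the sum $\overline{S}_{2n}=\sum_{k=0}^{2n-1}\e_{S_k}(\xi_k-\frac{p_{S_k}}{1-p_{S_k}})$ is a sum of independent centred variables whose observed (empirical) quadratic variation is exactly $\overline{V}_{2n}^2$. On $E_3(n)$ one has $\overline{V}_{2n}^2\le n^{d+\delta_0}$, and in particular each increment is bounded, $|\xi_k-\frac{p_{S_k}}{1-p_{S_k}}|\le R:=n^{(d+\delta_0)/2}$. The hypothesis $\delta_3+\frac{\delta_1}{\alpha}+\frac{\delta_2}{2}+3\delta_0<\frac12-\frac1{2\alpha}$ is precisely what makes $t:=\tfrac12 n^{3/4-\delta_3}$ a large deviation for $\overline{S}_{2n}$: it yields $(d+\delta_0)/2<3/4-\delta_3$, i.e. the fluctuation scale $R=n^{(d+\delta_0)/2}$ is of smaller order than the threshold $t$. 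I would then apply a Bernstein/Freedman-type martingale deviation inequality formulated through the observed quadratic variation (after truncating the increments at level $R$ and recentring, where $A_n$ and $E_1(n)$ enter, since $\frac1{1-p_{S_k}}\le c_n\ll R$ makes the truncation correction negligible). This gives, on $E_3(n)$,
$$\PP^{\e,\omega}\!\left(|\overline{S}_{2n}|>t,\ \overline{V}_{2n}^2\le n^{d+\delta_0}\,\Big|\,S\right)\le 2\exp\!\left(-c\,\frac{t^2}{n^{d+\delta_0}+R\,t}\right).$$
Because $(d+\delta_0)/2<3/4-\delta_3$, both exponents $t^2/n^{d+\delta_0}$ and $t/R$ are positive powers of $n$, so the right-hand side is at most $2\exp(-c'n^{\rho})$ for some $\rho>0$. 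Multiplying by $\PP(S_{2n}=0)\le Mn^{-1/2}$ and summing over $n$ then yields a finite sum, which together with the first piece proves \eqref{sec:EQ2}.

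The main obstacle is exactly this last concentration step. The textbook Bernstein/Freedman inequality is stated in terms of the predictable quadratic variation $\sum_{k}\frac{p_{S_k}}{(1-p_{S_k})^2}$, which we cannot bound inside the present intersection, this being the role of $E_4(n)$, which is absent here, and whose conditional mean may even be infinite when $\alpha<2$ (only $\EE[(1-p_0)^{-\alpha}]<\infty$ is assumed). Only the observed quadratic variation is under control, through $E_3(n)$. I would therefore need a martingale deviation inequality expressed with the observed quadratic variation, together with the truncation argument supplying the almost sure increment bound that such an inequality requires; verifying that the truncation and recentring corrections are negligible (this is where $A_n$ and $E_1(n)$ are used) and that the resulting exponent is a genuine positive power of $n$ under the stated condition on $\delta_0,\delta_1,\delta_2,\delta_3$ is the remaining bookkeeping.
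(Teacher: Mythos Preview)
Your global strategy is exactly the paper's: use $E_0(n)$ to reduce $\{X_{2n}\le 0\le X_{2n+1},\,S_{2n}=0\}$ to $\{|X_{2n}|\le n^{1/(2\alpha)+2\delta_0}\}$ up to a summable error, then use $B_n$ to force $|\overline S_{2n}|\gtrsim n^{3/4-\delta_3}$, and finally use $E_3(n)$ to show this is a large self-normalised deviation. Your treatment of the first piece (the $\xi_{2n}$ tail via $E_0(n)$) is correct and essentially the same as the paper's.

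The gap is precisely the one you flag in your last paragraph, and your proposed fix does not close it. Truncating the increments at $R=n^{(d+\delta_0)/2}$ and recentring does not turn Bernstein/Freedman into an inequality in the \emph{observed} quadratic variation: after truncation the exponent still involves the \emph{predictable} variance $\sum_k p_{S_k}/(1-p_{S_k})^2$, which on $A_n\cap E_1(n)$ you can only bound by $2n\,c_n^2=2n^{\,1+(1+2\delta_1)/\alpha+2\delta_0}$. With $t=\tfrac12 n^{3/4-\delta_3}$ one gets $t^2/(2nc_n^2)\asymp n^{\,1/2-1/\alpha-2\delta_3-2\delta_1/\alpha-2\delta_0}$, whose exponent is \emph{negative} for every $\alpha\in(1,2)$, so no stretched-exponential decay follows. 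The same obstruction kills the $Rt$ term. In short, truncation does not shrink the conditional second moments of a (near-)geometric variable enough to rescue Bernstein here.

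What the paper does instead is to invoke a genuinely self-normalised large-deviation inequality (Nagaev), which bounds $\PP^{\e,\omega}(|\overline S_{2n}|\ge x\,\overline V_{2n}\mid S)$ directly, with an error governed by the Lyapunov ratio $\overline L_{2n}=\overline C(2n)/\overline B_{2n}^{\,3}$. To make $\overline L_{2n}\le n^{-2\delta_0}$ one needs both an \emph{upper} bound on third moments, supplied by $E_1(n)$, and a \emph{lower} bound on $\overline B_{2n}^2$, supplied by $E_2(n)$ (at least $\gamma_0 n$ steps land in $\{a<p_y<b\}$, forcing $\overline B_{2n}^2\gtrsim n$). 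So your assertion that ``$E_2(n)$ is not needed here'' is where the argument actually breaks: without $E_2(n)$ the Lyapunov ratio is uncontrolled and Nagaev gives nothing. Once $\overline L_{2n}\le n^{-2\delta_0}$, Nagaev yields $\PP^{\e,\omega}(|\overline S_{2n}|\ge n^{\delta_0}\overline V_{2n}\mid S)=O(e^{-n^{\delta_0}})$, and then the hypothesis $\delta_3+\delta_1/\alpha+\delta_2/2+3\delta_0<\tfrac12-\tfrac1{2\alpha}$ is exactly what makes $n^{\delta_0}n^{(d+\delta_0)/2}<n^{3/4-\delta_3}-n^{1/(2\alpha)+2\delta_0}$, closing the loop.
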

\begin{proof}
Uniformly on $E_0(n)\cap E_1(n)$, we have

$\displaystyle {\mathbb P}^{\e,\omega} (S_{2n}=0\ \mbox{and}
      \  X_{2n} \le 0\le X_{2n+1},A_n,B_n, E_2(n), E_3(n))$
\begin{eqnarray}
&\le &  \sum_{k\ge 0}  {\mathbb P}^{\e,\omega}(S_{2n}=0\
    \mbox{and}\ X_{2n}= -k,A_n,B_n, E_2(n), E_3(n))  (1-n^{-1/(2\alpha)-\delta_0})^k\nonumber\\
  &\le&  \sum_{k= 0}^{ n^{1/(2\alpha)+2\delta_0} }  {\mathbb P}^{\e,\omega}(S_{2n}=0\
    \mbox{and}\ X_{2n}= -k,A_n,B_n, E_2(n), E_3(n) )  + O(n^{-2})\nonumber\\
  &\le& {\mathbb P}^{\e,\omega}(S_{2n}=0\
    \mbox{and}\ -n^{1/(2\alpha)+2\delta_0}\le X_{2n}\le 0,A_n,B_n, E_2(n), E_3(n) )
 + O(n^{-2}).\label{eqEncadrementX}
\end{eqnarray}
In order to apply an inequality due to S.V. Nagaev \cite{Nagaev}, we define
$\Xb_k:=\e_{S_k}\left(\xi_k-\frac{p_{S_k}}{1-p_{S_k}}\right)$, recall that
$\Sb_{2n}=\sum_{k=0}^{2n-1} \Xb_k$, and introduce $\Bb_{2n}:=\left(\EE^{\e,\omega}\left[\Sb_{2n}^2\vert
S\right]
\right)^{1/2}=
\left(\sum_{j=0}^{2n-1} \frac{p_{S_j}}{(1-p_{S_j})^2}\right)^{1/2}$.
We have
$$\Bb_{2n}^2\ge a\sum_{y\,:\, p_y\geq a} \frac{N_{2n}(y)}{(1-p_{y})^2}.$$
Let
$\Cb(2n):=\sum_{k=0}^{2n-1}\EE^{\e,\omega}\left[\left|\Xb_k\right|^3\vert S\right]$.
On  $A_n\cap E_1(n)\cap E_2(n)$, we have
\begin{equation}\label{eqLdebut}
\sum_{y\ :\ p_y<a} \frac{ N_{2n}(y)}{(1-p_y)^2}  \le
    \frac{2n-\# \Lambda_n} {(1-a)^2}
   \le   \frac{2-\gamma_0}{\gamma_0} \frac{\# \Lambda_n} {(1-a)^2}
   \le \frac{2-\gamma_0}{\gamma_0} \sum_{y\ :\ p_y\geq a} \frac{ N_{2n}(y)}{(1-p_y)^2}
\end{equation}
and so
$$
\Cb(2n)
\leq \sum_{y} \frac{16 N_{2n}(y)}{(1-p_{y})^3}
\leq \sum_{y} \frac{16 N_{2n}(y)}{(1-p_{y})^2} c_n
\leq\frac{32}{\gamma_0} \sum_{y\ :\ p_y\geq a} \frac{N_{2n}(y)}{(1-p_y)^2}c_n.
$$
Let $\Lb_{2n}:=\Cb(2n)/\Bb_{2n}^3$.
On $A_n\cap E_1(n)\cap E_2(n)$,
we have
\begin{eqnarray}
\Lb_{2n} &\leq& \frac{32}{\gamma_0 a^{3/2}}\left(
    \sum_{y\ :\ p_y\geq a} \frac{ N_{2n}(y)}{(1-p_y)^2}  \right)^{-1/2} c_n
  \leq  \frac{32}{\gamma_0 a^{3/2}} \frac{1-a}{\sqrt{\gamma_0 n}} c_n\nonumber\\
  &\le& \frac{32(1-a)}{(\gamma_0 a)^{3/2}} n^{-\frac 12+\frac 1{2\alpha}+\frac
     {\delta_1}\alpha+\delta_0}\le n^{-2\delta_0},\label{eqLfin}
\end{eqnarray}
if $n$ is large enough,
since $\frac{\delta_1}\alpha+3\delta_0< \frac 12-\frac 1{2\alpha}$.

Let us recall that $\Vb_{2n}=\left(\sum_{k=0}^{2n-1}\Xb_k^2\right)^{1/2}$.
We can now apply Nagaev (\cite{Nagaev}, Thm 1),
which gives uniformly on $A_n\cap E_1(n)\cap E_2(n)$,
\begin{eqnarray}
{\mathbb P}^{\e,\omega}(|\Sb_{2n}|\geq n^{\delta_0} \Vb_{2n}\vert S)
& \leq &  2\left(\frac{n^{2\delta_0}}{4\log 2}+1\right)\exp\left(-\frac{n^{2\delta_0}}{4}(1-c' \Lb_{2n} n^{\delta_0})\right)+2\exp\left(-\frac{c''}{\Lb_{2n}^2}\right)
\nonumber\\
&=&O(\exp(-n^{\delta_0}))\label{InegNagaev}
\end{eqnarray}
where $c'>0$ and $c''>0$ are universal constants.
We recall that
$X_{2n}=\sum_{k=0}^{2n-1} \e_{S_k} \xi_k=\Sb_{2n}+D_{2n}$.
We have, for large $n$, on $A_n \cap B_n\cap E_1(n)\cap E_2(n)$,
\begin{eqnarray*}
{\mathbb P}^{\e,\omega}( -n^{\frac{1}{2\alpha}+2\delta_0} \le X_{2n}\le 0, E_3(n)\vert S)
&\le& {\mathbb P}^{\e,\omega}\left(\vert X_{2n}-D_{2n}\vert
   \ge n^{\frac 34-\delta_3}-n^{\frac{1}{2\alpha}+2\delta_0},E_3(n) \vert S\right)\\
&\leq&{\mathbb P}^{\e,\omega}(|\Sb_{2n}|\geq n^{\delta_0} n^{\left(d+\delta_0\right)/2}, E_3(n)\vert S)\\
&\leq&{\mathbb P}^{\e,\omega}(|\Sb_{2n}|\geq n^{\delta_0} \Vb_{2n}\vert S),
\end{eqnarray*}
since $\frac 1{2\alpha}+2\delta_0<\frac 34-\delta_3$ and since
$\delta_3+\frac{\delta_1}\alpha+\frac{\delta_2}2+3\delta_0<\frac 12-\frac
   1{2\alpha}$.
Integrating this proves the lemma, by \eqref{eqEncadrementX} and \eqref{InegNagaev}.
\end{proof}

\begin{lem}\label{lemE_2}
\begin{equation}
\sum_{n\ge 0}{\mathbb P}(E_2(n)^c)<+\infty.
\end{equation}
\end{lem}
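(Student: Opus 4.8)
The plan is to condition on the vertical walk $S$ and exploit the independence between $S$ and $\omega$ under $\PP$. Recall that $\#\Lambda_n=\sum_{y\in\Z}\zeta_y N_{2n}(y)$, where the indicators $\zeta_y=\mathbf{1}_{\{a<p_y<b\}}$ are i.i.d. Bernoulli random variables of parameter $\gamma_0=\PP(a<p_0<b)>0$ and are independent of $S$. Hence, conditionally on $S$, the variables $\zeta_y N_{2n}(y)$ (indexed by $y$) are independent, take values in $[0,N_{2n}(y)]$, and since $\sum_y N_{2n}(y)=2n$,
$$\EE\bigl[\#\Lambda_n\,\big|\,S\bigr]=\gamma_0\sum_{y\in\Z}N_{2n}(y)=2\gamma_0 n.$$
Thus $E_2(n)^c=\{\#\Lambda_n<\gamma_0 n\}$ is a lower deviation of $\#\Lambda_n$ below one half of its conditional mean, and the whole point is to show that this deviation is exponentially unlikely.

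First I would observe that a naive second moment bound does not suffice. Conditionally on $S$ one has $\var(\#\Lambda_n\mid S)=\gamma_0(1-\gamma_0)\sum_y N_{2n}(y)^2$, and since $\sum_y N_{2n}(y)^2=\sum_{k,\ell=0}^{2n-1}\mathbf{1}_{\{S_k=S_\ell\}}$ has expectation $\sum_{k,\ell}\PP(S_{|k-\ell|}=0)=O(n^{3/2})$ (using the fact, recalled in the introduction, that $\PP(S_m=0)\le M m^{-1/2}$), Chebyshev's inequality would only give $\PP(E_2(n)^c)=O(n^{-1/2})$, which is not summable. The remedy is to replace Chebyshev by an exponential (Hoeffding-type) concentration inequality for the conditionally independent bounded sum $\#\Lambda_n=\sum_y \zeta_y N_{2n}(y)$: this turns control of $\sum_y N_{2n}(y)^2$ into a summable bound, \emph{provided} this quantity is not too large, which is exactly what the event $A_n$ guarantees.

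Accordingly, I would split
$$\PP\bigl(E_2(n)^c\bigr)\le \PP(A_n^c)+\PP\bigl(E_2(n)^c\cap A_n\bigr),$$
the first term being summable by Lemma \ref{lemA_n}. Since $A_n$ is measurable with respect to $S$, on $A_n$ one has the deterministic bound $\sum_y N_{2n}(y)^2\le \bigl(\max_y N_{2n}(y)\bigr)\sum_y N_{2n}(y)< n^{\frac12+\delta_2}\cdot 2n=2n^{\frac32+\delta_2}$. Applying Hoeffding's inequality conditionally on $S$, with deviation $t=\gamma_0 n=2\gamma_0 n-\gamma_0 n$, then yields on $A_n$
$$\PP\bigl(\#\Lambda_n<\gamma_0 n\,\big|\,S\bigr)\le \exp\left(-\frac{2(\gamma_0 n)^2}{\sum_y N_{2n}(y)^2}\right)\le \exp\bigl(-\gamma_0^2\,n^{\frac12-\delta_2}\bigr),$$
and integrating over $A_n\in\sigma(S)$ gives the same bound for $\PP(E_2(n)^c\cap A_n)$. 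This is summable as soon as $\delta_2<\tfrac12$, which is ensured by the eventual choice of the parameters: indeed the constraint of Lemma \ref{lem}, together with $\alpha\le 2$, forces $\tfrac{\delta_2}2<\tfrac12-\tfrac1{2\alpha}\le\tfrac14$.

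The main obstacle, as flagged above, is precisely that the natural second moment estimate falls short by a factor $n^{1/2}$, owing to the size $\sum_y N_{2n}(y)^2\asymp n^{3/2}$ of the self-intersection local time of the one-dimensional walk $S$. Overcoming it requires both the exponential concentration of the conditionally independent Bernoulli sum and the a priori control of $\max_y N_{2n}(y)$ provided by the event $A_n$.
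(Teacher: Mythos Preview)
Your proof is correct and takes a genuinely different route from the paper. The paper invokes \cite[Thm 1.3]{GKSZ} (an annealed large-deviation estimate for random walk in random scenery, applied separately along the even and odd sublattices to cope with the periodicity of the simple walk) and obtains directly $\PP(E_2(n)^c)\le \exp(-c_1 n^{1/3})$, with no restriction on $\delta_2$. You instead condition on $S$, apply Hoeffding's inequality to the independent bounded summands $\zeta_y N_{2n}(y)$, and control $\sum_y N_{2n}(y)^2$ on the good event $A_n$ via $\max_y N_{2n}(y)\le n^{1/2+\delta_2}$; the remaining contribution $\PP(A_n^c)$ is already summable by Lemma~\ref{lemA_n}.

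What each approach buys: yours is entirely self-contained (no external black box), reuses Lemma~\ref{lemA_n}, and actually gives the sharper exponent $n^{1/2-\delta_2}$ when $\delta_2<1/6$, at the price of the mild extra constraint $\delta_2<1/2$ (which you correctly observe is implied by the hypotheses of Lemma~\ref{lem} together with $\alpha\le 2$, and is in any case enforced by the final choice $\delta_2<1/6$). The paper's approach is shorter and keeps the lemma free of any hypothesis on $\delta_2$, but relies on a rather specific result from the random-walk-in-random-scenery literature. Both arguments ultimately hinge on the same structural fact that the self-intersection local time $\sum_y N_{2n}(y)^2$ is only of order $n^{3/2}$, which is what makes the deviation exponentially small rather than merely polynomially small.
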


\begin{proof}
According to \cite[Thm 1.3]{GKSZ}
applied twice with $u=\gamma_0/2$~: first with the scenery
$(\gamma_0-{\mathbf 1}_{\{a<p_{2y}<b\}})_{y\in\mathbb Z}$
and with the strongly aperiodic Markov chain $(S_{2n}/2)_{n\ge 0}$,
and second with the scenery $(\gamma_0-{\mathbf 1}_{\{a<p_{2y+S_1}<b\}})_{y\in\mathbb Z}$
and with the strongly aperiodic Markov chain $((S_{2n+1}-S_1)/2)_{n\ge 0}$ conditionally on $S_1$, we get
the existence of
$c_1>0$ such that, for every $n\ge 1$, we have
$${\mathbb P}(E_2(n)^c)\le \exp\left(-c_1n^{\frac 13}\right).$$

\end{proof}

\begin{lem}\label{lemE_3}
We have
\begin{equation}
\sum_{n\in\mathbb{N}}\PP( E_4(n)\setminus E_3(n))<\infty.
\end{equation}
\end{lem}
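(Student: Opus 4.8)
The plan is to condition on $(\e,\omega,S)$ and to use the fact, already recorded in the excerpt, that $\Vb_{2n}^2=\sum_{k=0}^{2n-1}\Xb_k^2$ is then a sum of independent nonnegative random variables with $\EE^{\e,\omega}[\Xb_k^2\vert S]=\var^{\e,\omega}(\xi_k\vert S)=\frac{p_{S_k}}{(1-p_{S_k})^2}\le (1-p_{S_k})^{-2}$. Since $E_4(n)$ is $\sigma(\omega,S)$-measurable and forces $\sum_{k=0}^{2n-1}(1-p_{S_k})^{-2}\le n^{d}$, while $E_3(n)^c=\{\Vb_{2n}^2>n^{d+\delta_0}\}$, the estimate of $\PP(E_4(n)\setminus E_3(n))=\PP(E_4(n)\cap E_3(n)^c)$ reduces, after conditioning on $S$ under $\PP^{\e,\omega}$, to a tail bound for $\Vb_{2n}^2$ above $n^{d+\delta_0}$, while its conditional mean $\sum_k\frac{p_{S_k}}{(1-p_{S_k})^2}$ is at most $n^d$ on $E_4(n)$.

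Because the exponents of $E_4(n)$ and $E_3(n)$ differ only by $\delta_0$, a first-moment Markov inequality would yield only the bound $n^{-\delta_0}$, which is not summable for small $\delta_0$. I would therefore apply Markov's inequality to the $p$-th conditional moment for an integer $p$ to be chosen later. The essential ingredient is a moment estimate for the conditionally geometric variables $\xi_k$: since the law of parameter $1-p$ satisfies $\EE[\xi^{a}]\le a!\,(1-p)^{-a}$ for every $a\ge1$ and every $p\in(0,1)$, one gets $\EE^{\e,\omega}[\Xb_k^{2a}\vert S]\le C_a\,(1-p_{S_k})^{-2a}$ with $C_a$ depending only on $a$. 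Expanding $\left(\sum_k \Xb_k^2\right)^p$ by the multinomial formula, each monomial factorizes by conditional independence; at most $p$ indices carry a nonzero exponent, so the product of the constants $C_{a_k}$ is bounded by a constant $\hat C_p$ depending only on $p$, and the multinomial theorem gives
$$\EE^{\e,\omega}\left[(\Vb_{2n}^2)^p\vert S\right]\le \hat C_p\left(\sum_{k=0}^{2n-1}(1-p_{S_k})^{-2}\right)^{p}.$$

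On the event $E_4(n)$ the right-hand side is at most $\hat C_p\, n^{dp}$, so the conditional Markov inequality yields
$$\mathbf 1_{E_4(n)}\,\PP^{\e,\omega}\left(\Vb_{2n}^2>n^{d+\delta_0}\vert S\right)\le \hat C_p\,\frac{n^{dp}}{n^{p(d+\delta_0)}}=\hat C_p\,n^{-p\delta_0}.$$
Taking $\EE^{\e,\omega}$ (integration over $S$) and then integrating over the environment $\omega$ gives $\PP(E_4(n)\setminus E_3(n))\le \hat C_p\,n^{-p\delta_0}$, and choosing any integer $p>1/\delta_0$ makes the series converge, which is the claim. The only genuinely delicate step is the moment bound of the previous paragraph; everything else is a routine conditioning-and-Markov computation. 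I expect the combinatorial control of $\EE^{\e,\omega}[(\Vb_{2n}^2)^p\vert S]$ — namely checking that the repeated indices in the multinomial expansion contribute only a $p$-dependent factor — to be the main (if mild) obstacle, since it is exactly this step that upgrades the non-summable first-moment bound to a summable one.
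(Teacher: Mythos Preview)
Your proposal is correct and is essentially the same argument as the paper's: both condition on $(\omega,S)$, bound $\EE^{\e,\omega}[\Xb_k^{2a}\mid S]\le C_a(1-p_{S_k})^{-2a}$, expand $(\Vb_{2n}^2)^p$ multinomially to get $\EE^{\e,\omega}[(\Vb_{2n}^2)^p\mid S]\le \hat C_p\bigl(\sum_k(1-p_{S_k})^{-2}\bigr)^p\le \hat C_p\,n^{dp}$ on $E_4(n)$, and then apply Markov's inequality with $p$ large enough that $n^{-p\delta_0}$ is summable. The paper writes the multinomial expansion via the multiplicities $\theta_j(k_1,\dots,k_N)$ and normalizes the constants slightly differently (taking a single $C_N$ with $\EE^{\e,\omega}[\Xb_k^{\nu}\mid S]\le C_N^{\nu}(1-p_{S_k})^{-\nu}$ for all $2\le\nu\le 2N$), but this is only a cosmetic difference.
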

\begin{proof}
We recall that, taken $\omega$ and $S$, $\left(\xi_k-\frac{p_{S_k}}{1-p_{S_k}}\right)_{y,k}$
is a sequence of independent, centered random variables.
For every integer $\nu\ge 2$, there exists a constant $\tilde{C}_\nu>0$ such that
$\left|{\mathbb E}^{\e,\omega}\left[(\xi_{k}-\frac{p_{S_k}}{1-p_{S_k}})^{\nu}|S\right]\right|
\leq \tilde{C}_\nu \left(\frac{1}{1-p_{S_k}}\right)^{\nu}$
$\mathbb P$-almost surely. Consequently, for every $N\ge 1$, there exists a constant $C_N>0$ such that for all
$2\leq \nu\leq 2N$,
$\left|{\mathbb E}^{\e,\omega}\left[(\xi_{k}-\frac{p_{S_k}}{1-p_{S_k}})^{\nu}|S\right]\right|
\leq \left(\frac{C_N}{1-p_{S_k}}\right)^{\nu}$.
Hence, for every $n\ge 1$ and $N\ge 1$, we have on $E_4(n)$~:
\begin{eqnarray*}
{\mathbb E}^{\e,\omega}[(\Vb_{2n}^{2})^{N}\vert S]
 &=&\sum_{k_1=0}^{2n-1}\sum_{k_2=0}^{2n-1}\dots \sum_{k_{N}=0}^{2n-1}
 \EE^{\e,\omega}\left[\prod_{i=1}^N\Xb_{k_i}^2\vert S\right]
\\
 &=&\sum_{k_1=0}^{2n-1}\sum_{k_2=0}^{2n-1}\dots \sum_{k_{N}=0}^{2n-1}
 \EE^{\e,\omega}\left[\prod_{j=0}^{2n-1}\Xb_{j}^{2\theta_j(k_1,\dots k_N)}\vert S\right]
\\
 &\leq&\sum_{k_1=0}^{2n-1}\sum_{k_2=0}^{2n-1}\dots \sum_{k_{N}=0}^{2n-1}
 \prod_{j=0}^{2n-1}\left(\frac{C_N}{1-p_{S_j}}\right)^{2\theta_j(k_1,\dots,k_N)}
\\
   &=&  (C_N)^{2N} \left(\sum_{k=0}^{2n-1}\frac{1}{(1-p_{S_k})^{2}}\right)^{N}
   \leq (C_N)^{2N}n^{d N}
\end{eqnarray*}
where $\theta_j(k_1,k_2,\dots,k_{N}):=\#\{1\leq i\leq N,\ k_i=j\}$.
Consequently, on $E_4(n)$,
$${\mathbb P}^{\e,\omega}(\Vb_{2n}^2> n^{d+\delta_0}\vert S)\leq n^{-(d+\delta_0)N}\EE^{\e,\omega}
\left[(\Vb_{2n}^2)^N\vert S\right]\leq (C_N)^{2N}n^{-\delta_0 N}=O(n^{-2})
$$
by taking $N$ large enough. Integrating this on $E_4(n)$ yields the result.
\end{proof}
\begin{lem}\label{LemmaProbaQueXVautK}
We have on $E_2(n)$, uniformly on $\omega$, $S$  and on $k\in\mathbb Z$:
\begin{equation}
{\mathbb P}^{\e,\omega}\left(X_{2n}=-k\left\vert \, S\right.\right) = O\left(
     \sqrt{\ln(n)n^{-1}}\right).
\end{equation}
\end{lem}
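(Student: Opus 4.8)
The plan is to bound the point probability by a Fourier (local-limit) argument, using the event $E_2(n)$ to produce order-$n$ well-behaved increments. Since $X_{2n}=\sum_{k=0}^{2n-1}\e_{S_k}\xi_k$ is integer valued, Fourier inversion on $[-\pi,\pi]$ gives, uniformly in $k$,
\[
{\mathbb P}^{\e,\omega}(X_{2n}=-k\mid S)=\frac1{2\pi}\int_{-\pi}^{\pi}e^{ik\theta}\,\varphi_n(\theta)\,d\theta
\le \frac1{2\pi}\int_{-\pi}^{\pi}|\varphi_n(\theta)|\,d\theta ,
\]
where $\varphi_n(\theta):={\mathbb E}^{\e,\omega}[e^{i\theta X_{2n}}\mid S]$. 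So it suffices to estimate $\int_{-\pi}^{\pi}|\varphi_n(\theta)|\,d\theta$ uniformly on $E_2(n)$, which automatically yields the bound for every $k$ at once.

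Next I would exploit that, given $\omega$ and $S$, the $\xi_k$ are independent geometric variables, so $\varphi_n(\theta)=\prod_{k=0}^{2n-1}{\mathbb E}^{\e,\omega}[e^{i\theta\e_{S_k}\xi_k}\mid S]$. Because $\e_{S_k}=\pm1$ and $\xi_k$ is geometric of parameter $p_{S_k}$, one computes
\[
\bigl|{\mathbb E}^{\e,\omega}[e^{i\theta\e_{S_k}\xi_k}\mid S]\bigr|^2=\frac{(1-p_{S_k})^2}{(1-p_{S_k})^2+2p_{S_k}(1-\cos\theta)}=\Bigl(1+\frac{2p_{S_k}}{(1-p_{S_k})^2}(1-\cos\theta)\Bigr)^{-1},
\]
the orientation $\e_{S_k}$ disappearing under the modulus. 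For the indices $k\in\Lambda_n$ (those with $a<p_{S_k}<b$) the coefficient $2p_{S_k}/(1-p_{S_k})^2$ is bounded below by $c_0:=2a/(1-a)^2>0$, while all remaining factors are $\le1$. Hence on $E_2(n)$, where $\#\Lambda_n\ge\gamma_0 n$,
\[
|\varphi_n(\theta)|\le\bigl(1+c_0(1-\cos\theta)\bigr)^{-\#\Lambda_n/2}\le\bigl(1+c_0(1-\cos\theta)\bigr)^{-\gamma_0 n/2}.
\]

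It then remains to estimate $\int_{-\pi}^{\pi}(1+c_0(1-\cos\theta))^{-\gamma_0 n/2}\,d\theta$. Using $1-\cos\theta\ge 2\theta^2/\pi^2$ on $[-\pi,\pi]$ and $\log(1+x)\ge\kappa x$ on the bounded range $x\in[0,2c_0]$, the integrand is dominated by a Gaussian $e^{-c\,n\,\theta^2}$, so the integral is $O(n^{-1/2})$ with a constant depending only on $a$ and $\gamma_0$; this is uniform in $k$ and in every $(\omega,S)$ realizing $E_2(n)$. Since $n^{-1/2}=O(\sqrt{\ln(n)\,n^{-1}})$, the claimed bound follows (in fact with the stronger rate $n^{-1/2}$). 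The conceptual content is simply that $E_2(n)$ guarantees a linear-in-$n$ number of increments each carrying a uniform spectral gap $|\varphi_n|<1$ away from $\theta=0$, which forces the $n^{-1/2}$ decay. The only delicate points are technical: checking that the modulus removes the orientations $\e_{S_k}$, and the elementary near-$0$ estimate of the lattice Fourier integral. The one thing to watch carefully is that all constants ($a,b,\gamma_0,c_0,\kappa$) are fixed and independent of $\omega$, $S$ and $k$, so that the resulting estimate is genuinely uniform as stated.
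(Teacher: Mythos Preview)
Your argument is correct and follows essentially the same Fourier/local-limit approach as the paper: bound the point probability by the integral of $|\varphi_n|$, then use that on $E_2(n)$ at least $\gamma_0 n$ of the geometric factors carry a uniform spectral gap determined by $a<p_y<b$. Your handling of the integral is in fact cleaner than the paper's---by using $1-\cos\theta\ge 2\theta^2/\pi^2$ on all of $[-\pi,\pi]$ together with $\log(1+x)\ge\kappa x$ on $[0,2c_0]$, you obtain a single Gaussian majorant and hence the sharper rate $O(n^{-1/2})$, whereas the paper splits $[0,\pi]$ into three pieces (near~$0$, a middle range, and a tail) and picks up a spurious $\sqrt{\ln n}$ from a suboptimal treatment of the near-zero part.
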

\begin{proof}
On $E_2(n)$, we have~:
\begin{eqnarray*}
{\mathbb P}^{\e,\omega}\left(X_{2n}=-k\vert\, S\right)
    &=& \frac 1{2\pi}
    \int_{-\pi}^\pi{\mathbb E}^{\e,\omega}\left[ e^{it X_{2n}}\vert\, S\right]
      e^{ikt}\, dt\\
  &\le& \frac 1{2\pi}
    \int_{-\pi}^\pi  \left\vert
    {\mathbb E}^{\e,\omega}\left[ e^{it X_{2n}}\vert\, S\right]
      \right\vert\, dt\\
  &\le&\frac 1{\pi}
    \int_{0}^\pi \prod_{y\in P} (\chi_{p_y}(\e_y t))^{N_{2n}(y)}
\end{eqnarray*}
with
$$ \chi_p(t):=\left\vert    {\mathbb E}^{\e,\omega}[e^{i t \xi_0}|p_0=p]
\right\vert=
\left\vert  \frac{1-p}{1-pe^{it}}\right\vert
    = \frac{1-p}{(1+p^2-2p\cos(t))^{\frac{1}2} }.
$$
Since $\chi_p(t)$ is decreasing in $p$ and since $0<a<p_y<b<1$ for $y\in P$,
there exist $0<\beta <\pi/2$ and $c>0$ such that
$$\mbox{for a.e.}\ \omega,\ \
\forall y\in P,\ \ \forall t\in[0,\beta],\ \
\chi_{p_y}(t)\leq \frac{1-a}{(1+a^2-2a\cos(t))^{\frac{1}{2}}}
    \le  \exp(-ct^2).$$
Let us define $a_n:=\sqrt{2\ln(n)/(c \gamma_0 n)}$.
Since $\#\Lambda_n=\sum_{y\in P} N_{2n}(y)\geq \gamma_0 n$ on $E_2(n)$, we have
$$
 \int_{a_n}^{\beta} \prod_{y\in P} (\chi_{p_y}(t))^{N_{2n}(y)}\, dt
\leq  \int _{a_n}^{\beta} \exp(-c t^2 \#\Lambda_n)\ dt
\leq   \int_{a_n}^{\beta} \exp(-c t^2 \gamma_0 n )\ dt
     \leq n^{- 1}
$$
 on $E_2(n)$.
Moreover,
$$ \int_0^{a_n} \prod_{y\in  P} (\chi_{p_y}(t))^{N_{2n}(y)}\, dt\le a_n$$
and
$$ \int_\beta^\pi \prod_{y\in P} (\chi_{p_y}(t))^{N_{2n}(y)}\, dt\le
   \int_\beta^\pi \prod_{y\in P}
         \left(\frac{1-p_y}{1-p_y\cos(\beta) }\right)^{N_{2n}(y)}\, dt
         \le  \pi \left(\frac {1-a}{1-a\cos(\beta)}\right)^{\gamma_0 n/2},$$
since $p>a>0$ for $p\in P$.
\end{proof}

\begin{lem} \label{LemmeProbaAn}
Suppose that
 $\delta':=\delta_3-\frac{\delta_2}2-{\delta_1}>0$ and
$\delta_3+\delta_2<\frac 14$.
Then, uniformly on $p_0$ and $(S_k)_k$,
$${\mathbb P}(A_n\setminus B_n \vert S,p_0) =O\left( n^{-\delta'}\right).$$
\end{lem}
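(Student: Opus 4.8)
The plan is to reduce the statement to a conditional anti-concentration estimate for $D_{2n}$ and then apply Esseen's concentration function inequality. Since $A_n$ depends only on $S$ while $B_n^c=\{|D_{2n}|\le n^{3/4-\delta_3}\}$, I would first write $\PP(A_n\setminus B_n\mid S,p_0)=\mathbf{1}_{A_n}\,\PP(|D_{2n}|\le n^{3/4-\delta_3}\mid S,p_0)$. Decomposing $D_{2n}=\frac{p_0}{1-p_0}N_{2n}(0)+W$ with $W:=\sum_{y\ne 0}\e_y\frac{p_y}{1-p_y}N_{2n}(y)$, the first summand is deterministic given $(S,p_0)$, and given $(S,p_0)$ the variables $(p_y)_{y\ne 0}$ are i.i.d. and independent of $p_0$, so $W$ is a sum of independent terms. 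Writing $Q(U;\lambda):=\sup_{x\in\RR}\PP(x\le U\le x+\lambda)$ for the (conditional) L\'evy concentration function, translation invariance reduces the task to bounding $Q(W;2n^{3/4-\delta_3})$ uniformly on $A_n$, in $p_0$ and $S$.

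I would then use Esseen's inequality $Q(W;\lambda)\le C\lambda\int_0^{1/\lambda}|\phi_W(t)|\,dt$, where $\phi_W$ is the conditional characteristic function of $W$ given $(S,p_0)$. As the $\frac{p_y}{1-p_y}$ share the characteristic function $g$ of $\frac{p_0}{1-p_0}$, one gets $|\phi_W(t)|=\prod_{y\ne 0}|g(t\,N_{2n}(y))|$. On $A_n$ we have $N_{2n}(y)\le n^{1/2+\delta_2}$, so for $t\le\frac12 n^{-3/4+\delta_3}$ each argument satisfies $t\,N_{2n}(y)\le\frac12 n^{-1/4+\delta_2+\delta_3}\to 0$ (here the hypothesis $\delta_2+\delta_3<1/4$ enters), so that only the behaviour of $g$ near $0$ is relevant.

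The delicate point, and what I expect to be the \emph{main obstacle}, is a quadratic bound $|g(s)|\le\exp(-cs^2)$ for small $s$, with $c>0$ depending only on the law of $p_0$ and valid without any finite variance assumption (recall that in Theorem \ref{sec:thm1} only $\EE[(1-p_0)^{-\alpha}]<\infty$ is assumed, possibly with $\alpha<2$). I would obtain it by symmetrisation: for $R:=\frac{p_0}{1-p_0}-\frac{p_0'}{1-p_0'}$ the difference of two independent copies, $1-|g(s)|^2=\EE[1-\cos(sR)]\ge c_0 s^2\,\EE[R^2\mathbf{1}_{\{|R|\le 1/s\}}]$, using $1-\cos x\ge c_0 x^2$ on $[-1,1]$. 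Since $p_0$ is non-constant, $R$ is non-degenerate, so $\EE[R^2\mathbf{1}_{\{|R|\le T_0\}}]>0$ for some $T_0$, which gives $1-|g(s)|^2\ge c_1 s^2$ for all $s\le 1/T_0$, uniformly. Hence $|\phi_W(t)|\le\exp(-c\,t^2\sum_y N_{2n}(y)^2)$ on the relevant range of $t$.

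It remains to lower bound $\sum_y N_{2n}(y)^2$ on $A_n$: since $\max_{k\le 2n}|S_k|\le n^{1/2+\delta_1}$, the walk visits at most $Cn^{1/2+\delta_1}$ sites, so Cauchy--Schwarz yields $\sum_y N_{2n}(y)^2\ge (2n)^2/(Cn^{1/2+\delta_1})\ge c'n^{3/2-\delta_1}$ (the discarded $y=0$ term being negligible). Feeding this Gaussian estimate into Esseen's integral gives $\int_0^{1/\lambda}|\phi_W(t)|\,dt=O(n^{-3/4+\delta_1/2})$, and therefore $Q(W;2n^{3/4-\delta_3})=O(n^{3/4-\delta_3}\cdot n^{-3/4+\delta_1/2})=O(n^{-(\delta_3-\delta_1/2)})$. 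As $\delta_3-\delta_1/2\ge\delta_3-\frac{\delta_2}2-\delta_1=\delta'$, this is $O(n^{-\delta'})$, with all constants depending only on the law of $p_0$, which is exactly the claimed uniform bound.
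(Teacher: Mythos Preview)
Your argument is correct and follows the same overall Fourier-analytic strategy as the paper: bound the concentration of $D_{2n}$ via the modulus of the (conditional) characteristic function, and control the latter using a quadratic upper bound $|g(s)|\le e^{-cs^2}$ near $0$ obtained by symmetrisation (exactly the argument the paper gives in a footnote, citing Lukacs). Your identification of this step as the main obstacle is spot on.

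The routes differ in two packaging choices. First, the paper smooths $D_{2n}$ by adding an independent Gaussian of variance $n^{3/2-2\delta_3}$ and then writes the probability by Fourier inversion; you invoke Esseen's inequality directly, which is the same mechanism in a cleaner wrapper. Second, and more substantively, the lower bound on the ``effective variance'' is handled differently: the paper introduces the set $F_n=\{y\ne 0:\ N_{2n}(y)\ge \tfrac12 n^{1/2-\delta_1}\}$, shows $\#F_n\ge \tfrac12 n^{1/2-\delta_2}$ on $A_n$, and combines this with a H\"older argument to bound the integral, ending with the exponent $\delta_3-\delta_1-\tfrac{\delta_2}{2}$. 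Your Cauchy--Schwarz bound $\sum_{y\ne 0}N_{2n}(y)^2\ge c\,n^{3/2-\delta_1}$ on $A_n$ is more direct and yields the sharper exponent $\delta_3-\tfrac{\delta_1}{2}\ge \delta'$. Both are uniform in $p_0$ and $S$, as required. In short: same idea, your execution is a bit more elementary and gives a slightly stronger bound, at no extra cost.
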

\begin{proof}
Up to an enlargement of the probability space, we consider a centered gaussian random variable
$G$ with variance $n^{ \frac 32-2\delta_3}$ independent of $(\omega,S)$.
We have
$${\mathbb P}(|D_{2n}| \le n^{\frac 34-\delta_3}\vert S,p_0)
{\mathbb P}(|G|\le n^{\frac 34-\delta_3})
   \le {\mathbb P}\left(|D_{2n}+G|\le 2n^{\frac 34-\delta_3}
\vert S,p_0 \right) $$
and so
$${\mathbb P}(|D_{2n}| \le n^{\frac 34-\delta_3}\vert S,p_0) \le
  {\mathbb P}\left(|D_{2n}+G|\le 2n^{\frac 34-\delta_3} \vert S,p_0
\right) /0,6.$$
Let $\tilde\chi$ be the characteristic function of $\frac {p_0}{1-p_0}$.
Since $p_0$ is non-constant, there exist $\tilde\beta>0$ and $\tilde{c}>0$ such that
\footnote{Applying \cite[Lemma 3.7.5, p. 58]{Lukacs} to the random variable $Y:=
\frac{p_0}{1-p_0}-\frac{p_1}{1-p_1}$ which is not identically equal to 0
and whose characteristic function is
$|\tilde\chi|^2$, we get that for every $r>0$ and every $t\in\left[-\frac 1r;\frac 1r\right]$,
$|1-|\tilde\chi(t)|^2|\ge \frac{t^2}3 {\mathbb E}[Y^2{\mathbf 1}_{\{|Y|\le r\}}]$.
We take $\tilde\beta$ such that
$\tilde{c}:= \frac 16 {\mathbb E}[Y^2{\mathbf 1}_{\{|Y|\le \tilde\beta^{-1}\}}]>0$.
For every $u\in[- \tilde\beta;\tilde\beta]$, we have
$|1-|\tilde\chi(u)||\ge\frac 12 |1-|\tilde\chi(u)|^2|\ge  {\tilde{c}} u^2$  and so
$|\tilde\chi(u)|\le 1- {\tilde{c}} u^2$.}
$$\forall u\in[- \tilde\beta;\tilde\beta],\ \  \vert\tilde\chi(u)\vert\le e^{-\tilde{c}u^2}.$$
Consequently,

$\displaystyle {\mathbb P}\left(|D_{2n}+G|\le 2n^{\frac 34-\delta_3}\vert S,p_0
\right)
=  \frac{2n^{\frac 34-\delta_3}}\pi\int_{\mathbb R}
  \frac{\sin(2tn^{\frac 34-\delta_3})}
   { 2tn^{\frac 34-\delta_3}}
{\mathbb E}[e^{itD_{2n}}\vert S,p_0]{\mathbb E}[e^{itG}]\, dt$
\begin{eqnarray*}
&=&  \frac{2n^{\frac 34-\delta_3}}\pi\int_{\mathbb R}
   \frac{\sin(2tn^{\frac 34-\delta_3})}
   { 2tn^{\frac 34-\delta_3}}
e^{it\frac{p_0}{1-p_0}N_{2n}(0)}\prod_{y\ne 0} \tilde\chi
\left(\e_y N_{2n}(y)t\right)e^{-\frac{t^2}2n^{\frac 32-2\delta_3}}\, dt\\
&\le&\frac{2n^{\frac 34-\delta_3}}\pi \int_{\mathbb R}
\prod_{y\ne 0} \left\vert\tilde\chi
\left( \e_y N_{2n}(y)t\right)\right\vert e^{-\frac{t^2}2n^{\frac 32-2\delta_3}}\, dt.
\end{eqnarray*}
Let $\delta_4>0$ be such that $\delta_5:=\frac 14-\delta_3-\delta_2-\delta_4>0$
and let $b_n:=n^{\delta_4+\delta_3-\frac 34 }$.
On the one hand, we have
\begin{eqnarray*}
I_1:=\int_{\{|t|>\tilde\beta b_n\}}
\prod_{y\ne 0} \left\vert\tilde\chi
\left(  \e_y N_{2n}(y)t\right)\right\vert e^{-\frac{t^2}2n^{\frac 32-2\delta_3}}\, dt
&\le& \int_{\{|t|>\tilde\beta b_n\}} e^{-\frac{t^2}2n^{\frac 32-2\delta_3}}\, dt\\
&\le& n^{\delta_3-\frac 34 }\int_{\{|s|>\tilde\beta n^{\delta_4}\}} e^{-s^2/2}\, ds\\
&\le& 2 n^{\delta_3-\frac 34 } e^{-\tilde\beta^2n^{2\delta_4}/2}.
\end{eqnarray*}
On the other hand, we will estimate the following quantity on $A_n$~:

$\displaystyle I_2:=\int_{\{|t|\le \tilde\beta b_n\}}
\prod_{y\ne 0} \left\vert\tilde\chi
\left( \e_y N_{2n}(y)t\right)\right\vert e^{-\frac{t^2}2n^{\frac 32-2\delta_3}}\, dt.$

Let us define $F_n:=\{y\ne 0\, :\, N_{2n}(y)\ge n^{1/2-\delta_1}/2\}$
and $\rho_n:=\# F_n$.
On $A_n$, we have $2n-n^{1/2+\delta_2}\le\sum_{y\ne 0}N_{2n}(y)\le
    \rho_nn^{1/2+\delta_2}+(2n^{1/2+\delta_1}-\rho_n)\frac{n^{1/2-\delta_1}}2$
and hence $\rho_n\ge n^{1/2-\delta_2}/2$ (if $n$ is large enough). Therefore, on $A_n$, we have
$\alpha_n:=\sum_{y\in F_n}N_{2n}(y)\ge n^{1-\delta_2-\delta_1}/4$.
Now, using the H\"older inequality, we have
\begin{eqnarray*}
I_2&\le& \prod_{y\in F_n} \left(\int_{\{|t|\le \tilde\beta b_n\}}
 \left\vert\tilde\chi
\left( \e_y N_{2n}(y)t\right)\right\vert^{\frac{\alpha_n}{N_{2n}(y)}} \, dt\right)
       ^{\frac{N_{2n}(y)}{\alpha_n}}\\
&\le&\sup_{y\in F_n}
\left(\int_{\{|t|\le \tilde\beta b_n\}}
 \left\vert\tilde\chi
\left(\e_y N_{2n}(y)t\right)\right\vert^{\frac{\alpha_n}{N_{2n}(y)}} \, dt\right)\\
&\le&b_n \sup_{y\in F_n}
\left(\int_{|v|\le \tilde\beta}
 \left\vert\tilde\chi
\left(\e_y  N_{2n}(y)vb_n\right)\right\vert^{\frac{\alpha_n}{N_{2n}(y)}} \, dv\right).
\end{eqnarray*}
Let us notice that, if $|v|\le \tilde\beta$,  we have on $A_n$,
$$|\e_y N_{2n}(y)vb_n |\le \tilde\beta n^{1/2+\delta_2}n^{\delta_4+\delta_3-\frac 34 }
   =\tilde\beta  n^{-\delta_5}\le\tilde\beta,$$
since $\delta_5>0$.
Hence, on $A_n$, we have
\begin{eqnarray*}
I_2
&\le&b_n  \sup_{y\in F_n}
\left(\int_{\{|v|\le \tilde\beta\}}e^{-\tilde{c} (N_{2n}(y))^2v^2 n^{2\delta_4+2\delta_3-
     \frac 32 }
\frac{\alpha_n}{N_{2n}(y)}} \, dv\right)\\
&\le&b_n \sup_{y\in F_n}
\left(\int_{\{|v|\le \tilde\beta\}}e^{-\tilde{c} N_{2n}(y)v^2 n^{2\delta_4+2\delta_3
-\frac 12 -\delta_2-\delta_1}
        /4}\, dv\right)\\
&\le& \sup_{y\in F_n} \frac {b_nn^{-\delta_3-\delta_4+\frac{\delta_2+\delta_1}2
     +\frac 14 }}
     {\sqrt{N_{2n}(y)}}
\left(\int_{\mathbb R}e^{-\tilde{c} s^2/4}\, ds\right)\\
&\le& \sqrt{2}n^{-\frac 3 4 +\delta_1+\frac{\delta_2}2} \int_{\mathbb R}e^{-\tilde{c} s^2/4}\, ds.
\end{eqnarray*}
Hence, uniformly on $A_n$ and on $p_0$, we have
$${\mathbb P}(A_n\setminus B_n \vert (S_k)_k,p_0) =
  O(n^{\delta_1+\frac{\delta_2}2-\delta_3}).$$
\end{proof}
\begin{lem}\label{lemB_n}
Under the same hypotheses, we have
$$\sum_n{\mathbb P}(S_{2n}=0,\, X_{2n}\le 0\le X_{2n+1};A_n\cap E_2(n)\setminus B_n)<\infty.$$
\end{lem}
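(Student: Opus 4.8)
\emph{Proof proposal.} The plan is to reduce the bound to the two preceding lemmas by a single conditioning argument. First I would exploit the geometric structure of the last sojourn: since $\e_0=1$ and we work on $\{S_{2n}=0\}$, we have $X_{2n+1}=X_{2n}+\e_{S_{2n}}\xi_{2n}=X_{2n}+\xi_{2n}$, and conditionally on $(S,\omega)$ the variable $\xi_{2n}$ is geometric with ${\mathbb P}^{\e,\omega}(\xi_{2n}\ge k\,|\,S)=p_0^k$ and independent of $X_{2n}$ (which is a function of $\xi_0,\dots,\xi_{2n-1}$ only). Decomposing over the value $-k$ of $X_{2n}$ then gives, on $\{S_{2n}=0\}$,
$${\mathbb P}^{\e,\omega}(X_{2n}\le 0\le X_{2n+1}\,|\,S)=\sum_{k\ge 0}{\mathbb P}^{\e,\omega}(X_{2n}=-k\,|\,S)\,p_0^k.$$

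Next I would insert the local estimate of Lemma \ref{LemmaProbaQueXVautK}: on $E_2(n)$ each term is $O(\sqrt{\ln(n)/n})$ uniformly in $k$, so summing the geometric series produces, on $E_2(n)$, a bound $O(\sqrt{\ln(n)/n}\,(1-p_0)^{-1})$ that depends on $\omega$ only through $p_0$. Taking the annealed expectation, bounding ${\mathbf 1}_{E_2(n)}\le 1$, and recalling that $A_n$ is a function of $S$ alone, I obtain
$${\mathbb P}(S_{2n}=0,X_{2n}\le 0\le X_{2n+1};A_n\cap E_2(n)\setminus B_n)\le C\sqrt{\frac{\ln n}{n}}\;{\mathbb E}\left[{\mathbf 1}_{\{S_{2n}=0\}}\,{\mathbf 1}_{A_n\setminus B_n}\,\frac{1}{1-p_0}\right].$$

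Finally I would condition on $(S,p_0)$: every factor except ${\mathbf 1}_{A_n\setminus B_n}$ is $(S,p_0)$-measurable, so Lemma \ref{LemmeProbaAn} yields ${\mathbb P}(A_n\setminus B_n\,|\,S,p_0)=O(n^{-\delta'})$ uniformly, and then the independence of $S$ and $\omega$, together with ${\mathbb P}(S_{2n}=0)=O(n^{-1/2})$ and ${\mathbb E}[(1-p_0)^{-1}]\le({\mathbb E}[(1-p_0)^{-\alpha}])^{1/\alpha}<\infty$ (Jensen, since $\alpha>1$), give a general term $O(\sqrt{\ln n}\,n^{-1-\delta'})$, which is summable because $\delta'>0$. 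The only real subtlety, and the step I would be most careful about, is the decoupling in the second paragraph: the local estimate must be turned into a quantity depending on $\omega$ solely through $p_0$, so that after conditioning on $(S,p_0)$ the residual event is exactly $A_n\setminus B_n$, for which Lemma \ref{LemmeProbaAn} (which averages out the $(p_y)_{y\ne 0}$) is tailor-made. The finiteness of ${\mathbb E}[(1-p_0)^{-1}]$, inherited from the moment hypothesis of Theorem \ref{sec:thm1}, is precisely what keeps the geometric factor $(1-p_0)^{-1}$ harmless.
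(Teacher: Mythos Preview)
Your proposal is correct and follows essentially the same approach as the paper: decompose the event $\{X_{2n}\le 0\le X_{2n+1}\}$ via the geometric law of $\xi_{2n}$, apply the local estimate of Lemma~\ref{LemmaProbaQueXVautK} on $E_2(n)$ to produce the factor $(1-p_0)^{-1}$, then condition on $(S,p_0)$ and invoke Lemma~\ref{LemmeProbaAn} together with ${\mathbb P}(S_{2n}=0)=O(n^{-1/2})$ and ${\mathbb E}[(1-p_0)^{-1}]<\infty$. Your write-up is in fact more explicit than the paper's (which compresses these steps into a single chain of inequalities), in particular by spelling out the independence of $\xi_{2n}$ from $X_{2n}$ and by deriving the integrability of $(1-p_0)^{-1}$ from the moment hypothesis via Jensen.
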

\begin{proof}
According to Lemma \ref{LemmaProbaQueXVautK}, Lemma \ref{LemmeProbaAn} and since
${\mathbb P}(S_{2n}=0)=O(n^{-1/2})$ and $\EE[1/(1-p_0)]<\infty$,
we have

$\displaystyle \EE\left[\sum_{k=0}^\infty \PP^{\e,\omega}(S_{2n}=0,
X_{2n}=-k, A_n\cap E_2(n)\setminus B_n)p_0^k\right]
$
\begin{eqnarray}
&=& \EE\left[\sum_{k=0}^\infty p_0^k \mathbf{1}_{\{S_{2n}=0\}}
\mathbf{1}_{(A_n\setminus B_n)\cap E_2(n)}\PP^{\e,\omega}(X_{2n}=-k\vert S)\right]\nonumber\\
&\leq & C \sqrt{(\ln n)n^{-1}}\EE\left[\frac 1{1-p_0} \mathbf{1}_{\{S_{2n}=0\}}
      \PP(A_n\setminus B_n\vert S,p_0)\right]\nonumber\\
& = & O(n^{- 1-\delta'}\sqrt{\ln n})\label{InegProbaSerieAB}.
\end{eqnarray}
\end{proof}
\begin{lem}\label{lemE_4}
If $\delta_0\alpha<\delta_1$, we have
\begin{equation}
\sum_{n}\PP\left(S_{2n}=0, X_{2n}\le 0 \le X_{2n+1}, E_4(n)^c, A_n, E_2(n), E_0(n)\right)
<+\infty.
\end{equation}
\end{lem}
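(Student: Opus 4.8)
The plan is to follow the scheme of Lemma~\ref{lemB_n}, replacing the event $A_n\setminus B_n$ by $E_4(n)^c$ and replacing the input from Lemma~\ref{LemmeProbaAn} by a \emph{fractional-moment} estimate tailored to the heavy-tailed sum $\sum_y N_{2n}(y)(1-p_y)^{-2}$. First I would exploit the constraint $X_{2n}\le 0\le X_{2n+1}$ exactly as there: since $\e_0=1$ and $S_{2n}=0$, the $(2n)$-th increment of $X$ equals $\xi_{2n}$, so on $\{X_{2n}=-k\}$ the condition $X_{2n+1}\ge 0$ amounts to $\xi_{2n}\ge k$, an event of conditional probability $p_0^{k}$ independent of $(S,\xi_0,\dots,\xi_{2n-1})$ given $\omega$. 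As all the events $E_4(n)^c,A_n,E_2(n),E_0(n)$ and $\{S_{2n}=0\}$ are measurable with respect to $(\omega,(S_j)_{j\le 2n},\xi_0,\dots,\xi_{2n-1})$, this gives
\[
\PP(\cdots)=\EE\Big[\mathbf 1_{\{S_{2n}=0\}}\,\mathbf 1_{E_4(n)^c\cap A_n\cap E_2(n)\cap E_0(n)}\sum_{k\ge 0}\PP^{\e,\omega}(X_{2n}=-k\mid S)\,p_0^{k}\Big].
\]
On $E_2(n)$, Lemma~\ref{LemmaProbaQueXVautK} bounds $\PP^{\e,\omega}(X_{2n}=-k\mid S)$ by $O(\sqrt{\ln(n)/n})$ uniformly in $k$, and $\sum_{k\ge 0}p_0^{k}=(1-p_0)^{-1}$; hence the quantity to estimate is at most a constant times $\sqrt{\ln(n)/n}\;\EE\big[(1-p_0)^{-1}\mathbf 1_{\{S_{2n}=0\}}\mathbf 1_{A_n\cap E_0(n)}\mathbf 1_{E_4(n)^c}\big]$.

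The core step is a tail bound for $E_4(n)^c$ that does not rely on integrability of $(1-p_0)^{-2}$. Writing $\sum_{k=0}^{2n-1}(1-p_{S_k})^{-2}=\sum_y N_{2n}(y)(1-p_y)^{-2}$, I would isolate the site $y=0$: on $E_0(n)\cap A_n$ one has $(1-p_0)^{-2}\le n^{1/\alpha+2\delta_0}$ and $N_{2n}(0)\le n^{1/2+\delta_2}$, so the term $y=0$ is at most $n^{1/2+1/\alpha+2\delta_0+\delta_2}$, which by the definition of $d$ is $\le\tfrac12 n^{d}$ for large $n$ (the gap being $n^{\delta_0+2\delta_1/\alpha}$). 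Thus $E_4(n)^c$ forces $\sum_{y\ne 0}N_{2n}(y)(1-p_y)^{-2}>\tfrac12 n^{d}$, a sum that is independent of $p_0$ and, given $S$, consists of independent terms whose common law has finite moment only of order $\alpha/2\le 1$. Using subadditivity of $x\mapsto x^{\alpha/2}$ and independence,
\[
\EE^{\e,\omega}\Big[\Big(\sum_{y\ne 0}N_{2n}(y)(1-p_y)^{-2}\Big)^{\alpha/2}\,\Big|\,S\Big]\le \EE\big[(1-p_0)^{-\alpha}\big]\sum_{y\ne 0}N_{2n}(y)^{\alpha/2},
\]
and then I would bound $\sum_y N_{2n}(y)^{\alpha/2}\le(\#\{y:N_{2n}(y)>0\})^{1-\alpha/2}(2n)^{\alpha/2}$ by the power-mean inequality, using that on $A_n$ at most $2n^{1/2+\delta_1}+1$ sites are visited. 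Markov's inequality at the exponent $\alpha/2$ then yields, uniformly on $E_0(n)\cap A_n$,
\[
\PP^{\e,\omega}\big(E_4(n)^c\mid S\big)\le C\,n^{-\frac\alpha2(3\delta_0+\delta_1+\delta_2)}.
\]

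Finally I would assemble the pieces. The last bound is uniform in $p_0$ on $E_0(n)\cap A_n$ and depends only on $S$, while $(1-p_0)^{-1}$ depends only on $\omega$ and is independent of $S$; together with $\EE[(1-p_0)^{-1}]<\infty$ (which follows from $\EE[(1-p_0)^{-\alpha}]<\infty$ and $\alpha>1$) and $\PP(S_{2n}=0)=O(n^{-1/2})$, the term under study is
\[
O\!\Big(\sqrt{\ln(n)/n}\cdot n^{-\frac\alpha2(3\delta_0+\delta_1+\delta_2)}\cdot n^{-1/2}\Big)=O\!\big(\sqrt{\ln n}\;n^{-1-\frac\alpha2(3\delta_0+\delta_1+\delta_2)}\big),
\]
which is summable in $n$. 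The main obstacle is exactly the heavy tail of $(1-p_0)^{-2}$: under the sole assumption $\EE[(1-p_0)^{-\alpha}]<\infty$ with $\alpha\le 2$ this variable need not be integrable, so $\sum_y N_{2n}(y)(1-p_y)^{-2}$ is genuinely heavy-tailed and cannot be handled by a first- or second-moment argument; the whole gain must come from the fractional moment of order $\alpha/2$, the decisive input being the geometric fact, furnished by $A_n$, that only $O(n^{1/2+\delta_1})$ distinct sites are visited. A secondary difficulty is that the weight $(1-p_0)^{-1}$ created by the constraint $X_{2n}\le 0\le X_{2n+1}$ and the event $E_4(n)^c$ both involve $p_0$; this is resolved by treating the site $0$ separately through $E_0(n)$ — which is where the hypothesis $\delta_0\alpha<\delta_1$ is used, to calibrate the $E_0$-truncation scale against the $\delta_1$-scale so that the site-$0$ term stays subcritical — after which $E_4(n)^c$ reduces to an event on the sites $y\ne 0$ that is independent of $p_0$.
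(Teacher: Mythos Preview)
Your proof is correct and follows essentially the same approach as the paper: both reduce the constraint $X_{2n}\le 0\le X_{2n+1}$ via Lemma~\ref{LemmaProbaQueXVautK} and the geometric series in $p_0$, then control $E_4(n)^c$ by a Markov inequality at the fractional exponent $\alpha/2$ combined with the subadditivity of $x\mapsto x^{\alpha/2}$, isolating the site $y=0$ through $E_0(n)$. The only differences are organizational: the paper conditions on $(S,p_0)$ and bounds $\sum_y N_{2n}(y)^{\alpha/2}$ crudely by $(\text{number of sites})\cdot(\max_y N_{2n}(y))^{\alpha/2}\le C\,n^{1/2+\delta_1}n^{(1/2+\delta_2)\alpha/2}$, obtaining the exponent $-\tfrac{3}{2}\delta_0\alpha$, whereas you first remove the $y=0$ term deterministically and then use the power-mean inequality $\sum_y N_{2n}(y)^{\alpha/2}\le (\#\{y\})^{1-\alpha/2}(2n)^{\alpha/2}$, which yields the slightly sharper exponent $-\tfrac{\alpha}{2}(3\delta_0+\delta_1+\delta_2)$.

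One small inaccuracy in your commentary: you say the hypothesis $\delta_0\alpha<\delta_1$ is what makes the site-$0$ contribution subcritical, but in fact the gap you compute, $\delta_0+2\delta_1/\alpha$, is positive regardless. In the paper's organization the hypothesis is used to make $n^{1/2+\delta_0\alpha}\le n^{1/2+\delta_1}$ so that the $y=0$ term does not dominate the $y\ne 0$ sum in the final bound; in your version the hypothesis is actually not needed, since your exponent $-\tfrac{\alpha}{2}(3\delta_0+\delta_1+\delta_2)$ is negative in any case. This does not affect the validity of your argument.
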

\begin{proof}
%Let $d:=\frac{1}{\alpha}+\frac{1}{2} +\frac{2}{\alpha}(\delta_1+\delta_2/2)+c\delta_0$.
We notice that on $E_0(n)\cap A_n$,
%we have for $p>2/\alpha$ but such that $\frac{1}{2}-\frac{1}{\alpha p}<\delta_0$ :

$\displaystyle \PP(E_4(n)^c \vert S,p_0)$
\begin{eqnarray}
& \leq &
   n^{-d\alpha/2}\EE\left[\left(\sum_{y=-n^{1/2+\delta_1}}^{n^{1/2+\delta_1}}\frac{1}
   {(1-p_y)^2}N_{2n}(y)\right)^{\alpha/2}\Big\vert S,p_0\right]\nonumber\\
&\leq&
   n^{-d\alpha/2}\EE\left[\sum_{|y|\leq n^{1/2+\delta_1}, y\neq 0}\frac{1}{(1-p_y)^{\alpha}}
   N_{2n}^{\alpha/2}(y)
   +\frac{1}{(1-p_0)^{\alpha}}N_{2n}^{\alpha/2}(0)\Big\vert S,p_0\right]\label{InegE4a}\\
& \leq & n^{-d\alpha/2} \left(2n^{1/2+\delta_1}\EE\left[\frac{1}{(1-p_0)^{\alpha}}\right]
+n^{\frac 12+\delta_0\alpha}\right)n^{(1/2+\delta_2)\alpha/2}=O(n^{-3\delta_0\alpha/2}),
\label{InegE4}
\end{eqnarray}
since $\alpha\leq 2$, $\delta_0\alpha<\delta_1$ and $d=\frac 12+\frac 1\alpha+\frac{2\delta_1}\alpha
 +3\delta_0+\delta_2$.
Similarly as in \eqref{InegProbaSerieAB}, this yields
\begin{equation*}
\EE\left[\sum_{k=0}^\infty \PP^{\e,\omega}(S_{2n}=0, X_{2n}=-k, E_4(n)^c\cap A_n
\cap E_2(n)\cap E_0(n))p_0^k\right]
=O(n^{-1-3\delta_0\alpha/2}\sqrt{\ln n}).
\end{equation*}
Hence we have
\begin{equation*}
\PP\left(S_{2n}=0, X_{2n}\le 0 \le X_{2n+1}, E_4(n)^c, A_n, E_2(n), E_0(n)\right)
=O(n^{-1-\delta_0\alpha/2}\sqrt{\ln n}).
\end{equation*}
\end{proof}
\begin{lem}\label{lemE_1} If $\delta_0<\frac{1}{2}(1-\frac{1}{\alpha})$, we have
$$\sum_n{\mathbb P}\left(S_{2n}=0,\, X_{2n}\le 0\le X_{2n+1},\, E_2(n)\setminus E_1(n)\right)
    <\infty. $$
\end{lem}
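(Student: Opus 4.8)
The plan is to reproduce the scheme of the proof of Lemma~\ref{lemB_n}, isolating the last sojourn and then reducing everything to a moment estimate for $\frac1{1-p_0}$. Since $\e_0=1$, on $\{S_{2n}=0\}$ the increment $X_{2n+1}-X_{2n}=\e_{S_{2n}}\xi_{2n}=\xi_{2n}$ is nonnegative, so the event $\{X_{2n}\le 0\le X_{2n+1}\}\cap\{X_{2n}=-k\}$ forces $\xi_{2n}\ge k$; as $\xi_{2n}$ is geometric of parameter $p_0$ and, given $(S,\omega)$, independent of $X_{2n}=\sum_{k<2n}\e_{S_k}\xi_k$, its conditional probability is $p_0^k$. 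Exactly as in \refeq{InegProbaSerieAB} this gives
$$\PP(S_{2n}=0,\ X_{2n}\le 0\le X_{2n+1},\ E_2(n)\setminus E_1(n))=\EE\left[\sum_{k\ge 0}\PP^{\e,\omega}(S_{2n}=0,\ X_{2n}=-k,\ E_2(n)\setminus E_1(n))\,p_0^k\right].$$
The event $E_2(n)\setminus E_1(n)$ is measurable with respect to $(\omega,S)$, hence may be pulled out of the inner conditional probability; and on $E_2(n)$ Lemma~\ref{LemmaProbaQueXVautK} bounds $\PP^{\e,\omega}(X_{2n}=-k\mid S)$ by $O(\sqrt{(\ln n)/n})$ uniformly in $k$ and $\omega$ (this estimate is insensitive to $p_0$ being close to $1$, since in its proof the factor attached to the site $y=0$ is simply bounded by $1$ and dropped). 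Summing the geometric series $\sum_{k\ge0}p_0^k=1/(1-p_0)$, then using that $S$ is independent of $\omega$ and that $\PP(S_{2n}=0)=O(n^{-1/2})$, I reduce the quantity to be estimated to
$$O\left(n^{-1}\sqrt{\ln n}\right)\,\EE\left[\frac{1}{1-p_0}\,\mathbf 1_{E_1(n)^c}\right].$$

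Everything then rests on showing that $\EE[\frac1{1-p_0}\mathbf 1_{E_1(n)^c}]$ is a negative power of $n$. I would split $E_1(n)^c$ according to the site $y$, $|y|\le n^{1/2+\delta_1}$, at which $1/(1-p_y)>c_n$. The contribution of $y=0$ is the delicate one, because there the weight $1/(1-p_0)$ is the very quantity that blew up the geometric sum; I would control it with the sharpened moment bound $\EE[\frac1{1-p_0}\mathbf 1_{\{1/(1-p_0)>c_n\}}]\le c_n^{1-\alpha}\EE[(1-p_0)^{-\alpha}]=O(c_n^{1-\alpha})$, which is exactly where the hypothesis $\EE[(1-p_0)^{-\alpha}]<\infty$ enters. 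For the $O(n^{1/2+\delta_1})$ sites $y\ne 0$, the factor $1/(1-p_0)$ is independent of the violating event, so that contribution factorises as $\EE[\frac1{1-p_0}]\,\PP(\exists\,y\ne0:\ 1/(1-p_y)>c_n)$, and a union bound together with $\PP(1/(1-p_0)>c_n)=O(c_n^{-\alpha})$ and $c_n=n^{\frac1\alpha(\frac12+\delta_1)+\delta_0}$ gives $O(n^{1/2+\delta_1}c_n^{-\alpha})=O(n^{-\alpha\delta_0})$.

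Collecting the two pieces, $\EE[\frac1{1-p_0}\mathbf 1_{E_1(n)^c}]=O(c_n^{1-\alpha}+n^{-\alpha\delta_0})$, and the full term is $O(n^{-1}\sqrt{\ln n}\,(c_n^{1-\alpha}+n^{-\alpha\delta_0}))$, whose exponent is strictly below $-1$ for $\alpha>1$ and $\delta_0>0$; the bound $\delta_0<\frac12(1-\frac1\alpha)$ is the binding constraint of the global parameter choice (it appears, strengthened, in Lemma~\ref{lem}) and is comfortably compatible with convergence here. I expect the main obstacle to be conceptual rather than computational: one must observe that the single site $y=0$ simultaneously carries the divergence of the geometric series and a possible violation of $E_1(n)$, so the naive bound $1/(1-p_0)\le c_n$ available on $E_1(n)$ cannot be used; the $\alpha$-th moment hypothesis is precisely what absorbs this correlated large factor, while the independence of the remaining sites from $p_0$ disposes of the $O(n^{1/2+\delta_1})$-wide window that $E_1(n)$ constrains.
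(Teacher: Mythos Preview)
Your proof is correct and follows the same route as the paper's: convert the event $\{X_{2n}\le 0\le X_{2n+1}\}$ into a geometric tail in $p_0$, apply Lemma~\ref{LemmaProbaQueXVautK} on $E_2(n)$, factor out $\PP(S_{2n}=0)$, and then split according to whether the violation of $E_1(n)$ occurs at the site $y=0$ or at some $y\ne 0$; this is exactly the paper's decomposition into $\{(1-p_0)^{-1}>c_n\}$ versus $\{(1-p_0)^{-1}\le c_n\}$. The only cosmetic difference is that the paper invokes H\"older for the $y=0$ term whereas you use the direct moment bound $\frac{1}{1-p_0}\mathbf 1_{\{1/(1-p_0)>c_n\}}\le c_n^{1-\alpha}(1-p_0)^{-\alpha}$, which yields the same estimate $O(c_n^{1-\alpha})$; your observation that the hypothesis $\delta_0<\tfrac12(1-\tfrac1\alpha)$ is not actually binding for summability here is also accurate.
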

\begin{proof}
Notice that on $\{\frac{1}{1-p_0}\leq c_n\}$, we have
$$\PP(E_1(n)^c\vert p_0)\leq 2n^{1/2+\delta_1}\PP\left(\frac{1}{1-p_0}> c_n\right)\leq
   \frac{2n^{1/2+\delta_1}}{c_n^{\alpha}}\EE\left[\left(\frac{1}{1-p_0}\right)^{\alpha}\right]
=O(n^{-\delta_0\alpha}) .$$
Similarly as in \eqref{InegProbaSerieAB}, since ${\mathbb E} [1/(1-p_0)]<\infty$,
for $\delta_0$ small enough, we have

$\displaystyle
 \EE\left[\sum_{k=0}^\infty \PP^{\e,\omega}(S_{2n}=0,
 X_{2n}=-k, E_2(n)\setminus E_1(n))p_0^k\right]$
\begin{eqnarray*}
&=& \EE\left[\sum_{k=0}^\infty p_0^k \mathbf{1}_{\{S_{2n}=0\}}
   \mathbf{1}_{E_2(n)\setminus E_1(n)}\PP^{\e,\omega}(X_{2n}=-k\vert S)\right]\\
&\leq & C \sqrt{(\ln n)n^{-1}}
 n^{-1/2}\left(\EE\left[\sum_{k=0}^\infty   \PP(E_1(n)^c\vert p_0)
   \mathbf{1}_{\{(1-p_0)^{-1}\leq c_n\}}p_0^k\right]
+\EE\left[\frac{1}{1-p_0}\mathbf{1}_{\{(1-p_0)^{-1}> c_n\}}\right] \right)\\
& = & O(n^{-1-c\delta_0}\sqrt{\ln n}),
\end{eqnarray*}
where we can use H\"{o}lder's inequality,  to deal with the second term of the third line, since $\alpha>1$ and $\delta_0<\frac{1}{2}(1-\frac{1}{\alpha})$.
\end{proof}

We take $\delta_3\in\left(0,\frac 12-\frac 1{2\alpha}\right)$ (since $\alpha>1$)
and then $\delta_1>0$ and $\delta_2>0$ such that
$$\delta_1<\frac 16,\ \delta_2<\frac 16,\ \delta_2<\frac 14-\delta_3,\
  \frac{\delta_1}\alpha+\frac{\delta_2}2<\frac 12-\frac 1{2\alpha}-\delta_3,\
  \delta_1+\frac{\delta_2}2<\delta_3 $$
and finally $\delta_0$ such that
$$\delta_0<\frac 18,\ \delta_0\alpha<\delta_1\ \mbox{and}\
\  \frac{\delta_1}\alpha+\frac{\delta_2}2+3\delta_0<\frac 12-\frac 1{2\alpha}-\delta_3.$$
%\mbox{and}\ \frac {\delta_1}\alpha+3\delta_0<\frac 34-\frac 1{2\alpha}. $$
Combining all the previous lemmas with these choices
for $\delta_0,\delta_1,\delta_2,\delta_3$, we get \eqref{eqSommePourTheoremeTransience},
which proves Theorem \ref{sec:thm1}.

\section{Proof of Theorem \ref{sec:TL} (functional limit theorem)}
We assume that $(p_k)_k$ satisfies the conditions of Theorem \ref{sec:TL}.
\begin{lem}\label{cvps}
Let $(\e_k)_k$ be a (fixed or random) sequence with values in $\{-1;1\}$.
Let $(p_k)_k$ be as in Theorem \ref{sec:TL}.
Then, under $\PP$, the sequence of random variables
$$\left(\left(n^{-\delta}\sum_{k=0}^{\lfloor n t\rfloor-1}\e_{S_k}\left(\xi_k- \frac{p_{S_k}}
    {1- p_{S_k}} \right), 0\right)_{t\geq 0}\right)_n $$
converges in distribution (in the space of Skorokhod ${\mathcal D}([0;+\infty),{\mathbb
  R}^2)$) to $(0,0)_{t\geq 0}$.
\end{lem}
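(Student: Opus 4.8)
The plan is to exploit that the candidate limit is the deterministic constant process $(0,0)_{t\ge 0}$. Since this limit is continuous (indeed constant), convergence in distribution in $\mathcal{D}([0,+\infty),\RR^2)$ is equivalent to convergence in probability, uniformly on every compact time interval. As the second coordinate is identically zero, it suffices to prove that for every $T>0$,
$$n^{-\delta}\max_{0\le m\le \floor{nT}}\left|\Sb_m\right|\longrightarrow 0 \quad\text{in }\PP\text{-probability},$$
where I write $\Xb_k:=\e_{S_k}\big(\xi_k-\frac{p_{S_k}}{1-p_{S_k}}\big)$ and $\Sb_m:=\sum_{k=0}^{m-1}\Xb_k$ (extending the notation of Section 3), so that $\sup_{t\in[0,T]}|\Sb_{\floor{nt}}|=\max_{0\le m\le\floor{nT}}|\Sb_m|$.

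The key structural observation is that, conditionally on $(\omega,\e,S)$, the $\xi_k$ are independent with $\EE^{\e,\omega}[\xi_k\mid S]=\frac{p_{S_k}}{1-p_{S_k}}$, so $(\Sb_m)_{m\ge 0}$ is a martingale with independent, centered increments. I would therefore control it by an $L^\nu$ maximal inequality for a well-chosen exponent $\nu$ rather than by an $L^2$ bound, because the conditional variance per step is $\frac{p_{S_k}}{(1-p_{S_k})^2}$, whose $\PP$-expectation $\EE[\frac{p_0}{(1-p_0)^2}]$ is infinite when $\beta<2$. Concretely, fix $\nu$ with $\frac{2\beta}{\beta+1}<\nu<\beta$ for $\beta\in(1,2)$ (and $\nu=2$ for $\beta=2$); this range is non-empty precisely because $\beta>1$, and for such $\nu$ one has $\EE[(1-p_0)^{-\nu}]<\infty$, since $\frac{1}{1-p_0}=1+\frac{p_0}{1-p_0}$ has the same power tail of index $\beta$ as $\frac{p_0}{1-p_0}$. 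Applying Doob's $L^\nu$ maximal inequality to the conditional martingale (valid as $\nu>1$) and then the von Bahr--Esseen inequality ($\EE|\sum_k Y_k|^\nu\le 2\sum_k\EE|Y_k|^\nu$ for independent centered $Y_k$ and $1\le\nu\le 2$), together with the elementary bound $\EE^{\e,\omega}[|\Xb_k|^\nu\mid S]\le(\EE^{\e,\omega}[\Xb_k^2\mid S])^{\nu/2}=(\frac{p_{S_k}}{(1-p_{S_k})^2})^{\nu/2}\le(1-p_{S_k})^{-\nu}$ (Jensen, using $\nu\le 2$), gives
$$\EE^{\e,\omega}\!\left[\max_{0\le m\le N}|\Sb_m|^\nu\,\Big|\,S\right]\le C_\nu\sum_{y\in\ZZ}\frac{N_N(y)}{(1-p_y)^\nu},\qquad N:=\floor{nT}.$$

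I would then integrate out the environment. Since $S$ is independent of $\omega=(p_y)_y$ under $\PP$ and $\sum_y N_N(y)=N$, taking $\PP$-expectation yields
$$\EE\!\left[\max_{0\le m\le N}|\Sb_m|^\nu\right]\le C_\nu\,\EE\!\left[(1-p_0)^{-\nu}\right]\,N.$$
A Markov inequality then bounds $\PP\big(n^{-\delta}\max_{0\le m\le N}|\Sb_m|>\varepsilon\big)$ by a constant times $N\,n^{-\nu\delta}=O\big(n^{\,1-\nu\delta}\big)$, and since $\delta=\frac12+\frac1{2\beta}$ the exponent $1-\nu\delta$ is negative exactly when $\nu>\frac1\delta=\frac{2\beta}{\beta+1}$, which holds by the choice of $\nu$. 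This gives the claimed convergence to $0$ for each $T$, hence the lemma. The main obstacle --- and the only place where the heavy-tailed case $\beta<2$ really bites --- is the non-integrability of the per-step conditional variance; the whole point is to replace the unavailable $L^2$ estimate by an $L^\nu$ estimate with $\frac{2\beta}{\beta+1}<\nu<\beta$, the compatibility of these two constraints being exactly the assumption $\beta>1$.
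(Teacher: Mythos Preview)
Your proof is correct, and it takes a somewhat different route from the paper's. Both arguments start from the same reduction --- uniform convergence in probability on compacts --- and both exploit that, given $(\omega,\e,S)$, the partial sums $(\Sb_m)_m$ form a martingale with independent centered increments. The difference is in how the maximal inequality is applied. The paper works in $L^2$: since $\EE\big[(1-p_0)^{-2}\big]$ is infinite for $\beta<2$, it first restricts to a high-probability event $\tilde E_4(N,v)=\big\{\sum_{k<N}(1-p_{S_k})^{-2}\le N^{1/2+1/\beta+v}\big\}$ (established via the moment estimate behind Lemma~\ref{lemE_4}), and then uses the conditional Doob $L^2$ inequality on that event. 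You instead choose an exponent $\nu\in\big(\tfrac{2\beta}{\beta+1},\beta\big)$, apply Doob's $L^\nu$ inequality together with von~Bahr--Esseen, and integrate directly, using only $\EE\big[(1-p_0)^{-\nu}\big]<\infty$.

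Your approach is more self-contained and avoids the good-event conditioning and the reference back to the Section~3 machinery; the paper's approach reuses an estimate already in hand and keeps everything in $L^2$. Either way the constraint $\beta>1$ is exactly what makes the argument close, and you identify this correctly as the place where the heavy-tailed case bites.
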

\begin{proof}
We first notice that it is enough to prove that
$$N^{-\delta}\sup_{0\leq n\leq N}\left|\sum_{k=0}^{n-1}\e_{S_k}\left(\xi_k- \frac{p_{S_k}}
    {1- p_{S_k}} \right)\right|\to_{N\to +\infty}0$$
in probability.

Let us define
$$\tilde E_4(N,v):= \left\{ \sum_{k=0}^{N-1}\frac 1{(1-p_{S_k})^2}\le N^{\frac 12
    +\frac 1\beta +v} \right\}.$$

%We introduce ${\tilde{\mathcal A}}_N:=\{\max_{k\leq N}|S_k|\leq N^{1/2+\delta_0}, \max_{y\in\Z}|N_N(y)|\leq %N^{1/2+\delta_0}\}$.
We proceed as in formula (\ref{InegE4a}) (with a conditioning with respect to $S$ only,
%$\delta_0$ well chosen
and $\alpha<\beta$ but close enough to $\beta$)  to prove that ${\mathbb P}[(\tilde E_4(N,v))^c | S] \leq  N^{-c v }$ on
$A_N$ for $c>0$ and $N$ large enough. Moreover, ${\mathbb P}( A_N^c)\to_{N\to+\infty} 0$ by Lemma \ref{lemA_n}, which gives
%to the proof of Lemma \ref{lemA_n}
%(with a conditioning with respect to $S$ only),
%we know that, for every $v>0$ small enough (and so for every $v>0$), we have
$$\lim_{N\rightarrow +\infty}{\mathbb P}\left(\tilde E_4(N,v)\right) =1.$$
Now, taken $(\e, S,\omega)$, $\left(\sum_{k=0}^{n-1}\e_{S_k}\left(\xi_k-\frac{p_{S_k}}{1-p_{S_k}}
\right)
\right)_n$
is a martingale. Hence, according to the maximal inequality for martingales we have, for every $\theta>0$,
\begin{eqnarray*}
{\mathbb P}^{\e,\omega}\left(\sup\limits_{n\le N}\left|\left. \left(
\sum_{k=0}^{n-1}\e_{S_k}\left(\xi_k-\frac{p_{S_k}}{1-p_{S_k}}\right)\right)^2
   \right|\ge
     \theta^2 N^{2\delta}
  \right\vert S\right)
  &\le&\frac{ 2 \sup\limits_{n\le N}  {\mathbb E}^{\e,\omega}\left[\sum_{k=0}^{n-1}
     \left(\xi_k-\frac{p_{S_k}}{1-p_{S_k}}
       \right)^2|S\right]}{\theta^2N^{2\delta}}\\
  &\le& \frac{ 2 \sum_{k=0}^{N-1}\frac 1{(1-p_{S_k})^2}}  {\theta^2N^{2\delta}}\\
  &\le& \frac{ 2  N^{\frac 12  +\frac 1\beta +v}  }  {\theta^2N^{2\delta}}
     =2 N^{-\frac 12+v}\theta^{-2},
\end{eqnarray*}
on $\tilde E_4(n,v)$, since $\delta=\frac{1}{2}+\frac{1}{2\beta}$.
Hence, we get
$${\mathbb P}\left(\sup_{n\le N}\left|
\sum_{k=0}^{n-1}\e_{S_k}\left(\xi_k-\frac{p_{S_k}}{1-p_{S_k}}\right)\right|\ge \theta N^\delta
  \right)\le 1-{\mathbb P}(\tilde E_4(N,v))+ 2 N^{-\frac 12+v}\theta^{-2}.
$$
From this we conclude that
$\lim_{n\rightarrow+\infty }{\mathbb P}\left(\sup_{n\le N}\left|
\sum_{k=0}^{n-1}\e_{S_k}\left(\xi_k-\frac{p_{S_k}}{1-p_{S_k}}\right)\right|\ge
   \theta N^\delta
  \right)=0$.
\end{proof}
The next lemma follows from the proof of \cite[Thm 4]{GLN2} when $\beta=2$.
The proof of the general case $\beta\in(1,2]$ is postponed to Section
\ref{SectionPreuvDuLemmeCV}.
\begin{lem}\label{LemmaCVdansDenDimension2}
Let $\beta\in(1,2]$.
Let $S=(S_n)_{n\ge 0}$ be a random walk on $\mathbb Z$ starting from $S_0=0$, with
iid
centered square integrable and non-constant increments and such that
$gcd\{k\, :\, {\mathbb P}(S_1=k)>0\}=1$.
Let $(\tilde\e_y)_{y\in\mathbb Z}$ be a sequence of iid random variables independent of $S$
with symmetric distribution and such that
$(n^{-\frac 1\beta}\sum_{k=1}^n\tilde\e_k)_n $ converges in distribution to a random variable
$Y$ with stable distribution of index $\beta$.
Then, the following convergence holds in distribution in $\mathcal{D}([0,+\infty),\RR^2)$
\begin{equation*}
\left(n^{-\delta}\sum_{k=0}^{\lfloor nt\rfloor-1}\tilde\e_{S_k},
   n^{-\frac 12}S_{\lfloor n t \rfloor}\right)_{t\geq 0}\longrightarrow_{n\to+\infty}
    (\tilde\Delta_t, \tilde B_t)_{t\geq 0},
\end{equation*}
with $\delta=\frac 12+\frac 1{2\beta}$,
where $( \tilde B_t)_t$ is a Brownian motion such that $Var(\tilde B_1)=Var(S_1)$ and with
$(\tilde L_t(x))_{t,x}$ the jointly continuous version
of its local time and where
$$\tilde\Delta_t:=\int_{\mathbb R}\tilde L_t(x)\, d\tilde Z_x ,$$
with $\tilde Z$ independent of $\tilde B$ given by two independent right continuous
stable processes $(\tilde Z_x)_{x\ge 0}$ and $(\tilde Z_{-x})_{x\ge 0}$
with stationary independent increments
such that $\tilde Z_1$, $\tilde Z_{-1}$
have the same distribution as $Y$.
\end{lem}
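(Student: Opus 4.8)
This statement is a Kesten--Spitzer type limit theorem for a random walk in random scenery, the new features with respect to \cite[Thm 4]{GLN2} being the \emph{joint} convergence with the walk $S$ itself and the general index $\beta\in(1,2]$ (the case $\beta=2$ being treated in \cite{GLN2}). The plan is to follow the method of \cite{KestenSpitzer}, reducing the functional convergence to the convergence of the finite-dimensional distributions together with tightness in $\mathcal{D}([0,+\infty),\RR^2)$. We write the scenery sum through the local times $N_m(y)=\#\{0\le k<m:S_k=y\}$, namely $\sum_{k=0}^{\floor{nt}-1}\tilde\e_{S_k}=\sum_{y\in\ZZ}\tilde\e_yN_{\floor{nt}}(y)$. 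The second coordinate is dealt with once and for all by Donsker's theorem, which gives both the convergence $n^{-1/2}S_{\floor{n\cdot}}\to\tilde B$ and its tightness.

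For the finite-dimensional distributions, fix $0\le t_1<\dots<t_m$ together with reals $\theta_1,\dots,\theta_m$ and $\lambda_1,\dots,\lambda_m$, and condition on $S$. Since $n^{-1/2}S_{\floor{nt_j}}$ is $S$-measurable while, given $S$, the quantities $n^{-\delta}\sum_y\tilde\e_yN_{\floor{nt_j}}(y)$ are linear images of the independent symmetric scenery, the conditional characteristic function factorizes as $\prod_{y\in\ZZ}\phi(w_y)$, where $\phi$ is the characteristic function of $\tilde\e_0$ and $w_y:=n^{-\delta}\sum_{j=1}^m\theta_jN_{\floor{nt_j}}(y)$. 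As $\tilde\e_0$ lies in the normal domain of attraction of the symmetric $\beta$-stable law with characteristic function $e^{-A_1|\theta|^\beta}$, one has $\log\phi(u)=-A_1|u|^\beta+o(|u|^\beta)$ as $u\to0$; and since $\max_yN_{\floor{nt}}(y)=O(n^{1/2+\upsilon})$ (by the local-time moment bound \cite[Lem.\ 1]{KestenSpitzer}) forces $\max_y|w_y|=O(n^{-1/(2\beta)+\upsilon})\to0$, this expansion applies uniformly in $y$. Using $\delta\beta=\frac\beta2+\frac12$ we obtain
$$\log\prod_{y\in\ZZ}\phi(w_y)=-A_1\,n^{-\delta\beta}\sum_{y\in\ZZ}\Big|\sum_{j=1}^m\theta_jN_{\floor{nt_j}}(y)\Big|^\beta+o(1).$$
The crucial input is the joint functional convergence of the walk and its local times: coupling $S$ with $\tilde B$ through a strong invariance principle (or the Skorokhod representation), one has $n^{-1/2}N_{\floor{nt}}(\floor{\sqrt n\,x})\to\tilde L_t(x)$ jointly with $n^{-1/2}S_{\floor{n\cdot}}\to\tilde B$, whence
$$n^{-\delta\beta}\sum_{y\in\ZZ}\Big|\sum_{j=1}^m\theta_jN_{\floor{nt_j}}(y)\Big|^\beta\longrightarrow\int_{\RR}\Big|\sum_{j=1}^m\theta_j\tilde L_{t_j}(x)\Big|^\beta\,dx.$$
The conditional characteristic function thus converges to $\exp\big(-A_1\int_\RR|\sum_j\theta_j\tilde L_{t_j}(x)|^\beta\,dx\big)$, which is exactly the conditional characteristic function, given $\tilde B$, of the vector $(\tilde\Delta_{t_j})_j=(\int_\RR\tilde L_{t_j}\,d\tilde Z)_j$. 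Since $\tilde L$ is a continuous functional of $\tilde B$, multiplying by the walk factor $e^{i\sum_j\lambda_jn^{-1/2}S_{\floor{nt_j}}}$ and integrating (bounded convergence) yields the joint convergence of the finite-dimensional distributions to those of $(\tilde\Delta,\tilde B)$.

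It remains to prove tightness of the first coordinate, which is where the two regimes differ. For $\beta=2$ the scenery has finite variance and one uses the moment estimate of \cite[Thm 4]{GLN2}, bounding the increments $n^{-\delta}\sum_y(N_{\floor{nt}}(y)-N_{\floor{ns}}(y))\tilde\e_y$ through the occupation sums $\sum_y(N_{\floor{nt}}(y)-N_{\floor{ns}}(y))^2$. For $\beta\in(1,2)$ the scenery has infinite variance and this argument breaks down; here I would follow the truncation device of \cite{KestenSpitzer}, splitting $\tilde\e_y$ at a level of order $n^{1/(2\beta)}$ (the natural scenery scale), controlling the truncated part by its now finite variance via the same occupation sums, and showing that the contribution of the large values is uniformly negligible because, on the $O(\sqrt n)$ visited sites, such values are too rare to accumulate under the $n^{-\delta}$ normalization. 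This provides an increment bound of Kesten--Spitzer type sufficient for tightness in $\mathcal{D}([0,+\infty),\RR^2)$.

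The main obstacle is precisely this tightness in the infinite-variance regime $\beta\in(1,2)$: the moment method available for $\beta=2$ no longer applies, and the truncation level must be calibrated so that neither the truncated nor the tail part spoils the increment estimate uniformly in $n$. A secondary technical point, already present in the finite-dimensional step, is the uniform control over all visited sites of the error $o(|u|^\beta)$ in the expansion of $\log\phi$, which rests on the a priori bound $\max_yN_{\floor{nt}}(y)=O(n^{1/2+\upsilon})$.
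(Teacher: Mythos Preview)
Your plan follows the same Kesten--Spitzer scheme as the paper, and the finite-dimensional step is essentially identical: rewrite the scenery sum through local times, condition on $S$, use $\log\phi(u)=-A_1|u|^\beta+o(|u|^\beta)$ together with $\sup_y|w_y|\to 0$ in probability (the paper cites \cite[Lem.~4]{KestenSpitzer} rather than a moment bound), and then pass to the limit in the exponent. Two differences are worth pointing out.

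First, you are making tightness much harder than it is. What you call ``the main obstacle'' --- tightness of the first coordinate when $\beta\in(1,2)$ --- is not something you have to prove: it is the content of \cite{KestenSpitzer} itself, which already establishes the full functional convergence of $\big(n^{-\delta}\sum_{k<\lfloor nt\rfloor}\tilde\e_{S_k}\big)_t$ in $\mathcal D$ for every $\beta\in(1,2]$. The paper therefore bypasses the truncation entirely: each coordinate is separately tight (Kesten--Spitzer for the first, Donsker for the second), both limits are continuous, and a standard Billingsley argument (\cite[Thms 13.2, 13.4 and Cor.\ p.~142]{Bil}) promotes marginal tightness plus fidi convergence to joint tightness in $\mathcal D([0,\infty),\RR^2)$. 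Your proposed reproof by truncation would work, but it duplicates \cite{KestenSpitzer}.

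Second, for the joint convergence of $n^{-\delta\beta}\sum_y\big|\sum_j\theta_j N_{\lfloor nt_j\rfloor}(y)\big|^\beta$ together with $n^{-1/2}S_{\lfloor nt_j\rfloor}$, you appeal to a strong invariance principle for local times. That is a legitimate route but heavier than needed. The paper (Lemma~\ref{LemmeCVjointe}) proceeds more elementarily: it discretizes space at mesh $\tau$, replaces $N_{\lfloor nt\rfloor}(y)$ by occupation times $T_t^n(\ell\tau,(\ell+1)\tau)$, uses the joint weak convergence of these finitely many occupation times with $S_{\lfloor nt_j\rfloor}/\sqrt n$ (a direct consequence of Donsker and the continuous mapping theorem, as in \cite[Lem.~2]{GLN2}), and controls the discretization error uniformly in $n$ via the estimates of \cite{KestenSpitzer}. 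This avoids any almost-sure coupling and stays within weak-convergence tools.
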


%\begin{proof}
%Let us prove the convergence taken $(\omega,(S_k)_k)$, in the case $\beta=2$.
%Taken $(\omega,(S_k)_k)$, the random variables $\xi_k$'s are mutually independent,
%with expectations $\frac{p_{S_k}} {1- p_{S_k}} $ and variances
% $\frac{p_{S_k}} {(1- p_{S_k})^2} $.
%Hence, according to the Kolmogorov strong law of large number, it is enough
%to see that $\sum_{k\ge 0}\frac{Var^\omega(\xi_{S_k}^{(k)} |(S_p)_p) } {(k+1)^{3/2}}<+\infty$.
%We have
%$$ \sum_{k\ge 0}\frac{Var^\omega(\xi_{S_k}^{(k)} |(S_p)_p) } {(k+1)^{3/2}}
%       =\sum_{k\ge 0}\frac {p_{S_k}}{(k+1)^{3/2}(1-p_{S_k})^2}.$$
%This quantity is almost surely finite since its expectation is
%equal to ${\mathbb E}[p_0(1-p_0)^{-2}]\sum_{k\ge 1}k^{-3/2}$
%which is finite.
%\end{proof}
Now, we prove a functional limit theorem for $(X_{\floor{nt}},S_{\floor{nt}})$
from which we will deduce our theorem \ref{sec:TL}.
\begin{prop}\label{sec:TLF}
Under the assumptions and with the notations of Theorem \ref{sec:TL}, the sequence of processes
$$\left(\left(n^{-\delta}X_{\floor {nt}},n^{-1/2}S_{\floor {nt}}\right)_{t\ge 0}\right)_n$$
converges in distribution under $\PP$ (in the space of Skorokhod ${\mathcal D}([0;+\infty),{\mathbb
  R}^2)$) to
$(\sigma\Delta_t,B_t)_{t\ge 0}$.
\end{prop}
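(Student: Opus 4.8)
The plan is to combine the two preceding lemmas through the additive decomposition $X_m = D_m + \Sb_m$, where $D_m = \sum_{y\in\ZZ}\frac{\e_y p_y}{1-p_y}N_m(y)$ is the conditional expectation of $X_m$ given $(\omega,S)$ and $\Sb_m = \sum_{k=0}^{m-1}\e_{S_k}\pare{\xi_k-\frac{p_{S_k}}{1-p_{S_k}}}$ is the associated martingale part. By Lemma \ref{cvps} the rescaled martingale part $(n^{-\delta}\Sb_{\floor{nt}},0)_t$ converges to $0$ in $\mathcal D$, so it suffices to prove that $(n^{-\delta}D_{\floor{nt}},n^{-1/2}S_{\floor{nt}})_t$ converges in distribution to $(\sigma\Delta_t,B_t)_t$. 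Since $(S_k)_k$ is the simple random walk with $\pm1$ steps, it has centered, square-integrable, aperiodic increments, so the walk hypothesis of Lemma \ref{LemmaCVdansDenDimension2} is satisfied; the real work is to exhibit $D_{\floor{nt}}$, up to a negligible error, as a random walk in an iid \emph{symmetric} scenery driven by $S$ (or by a coarsening of $S$), and then to identify the constant $\sigma$.

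In case \textit{(b)} this is immediate. Set $\tilde\e_y := \e_y\frac{p_y}{1-p_y}$; these are iid, independent of $S$, and symmetric because $\e_y$ is a symmetric sign independent of $p_y$, and $D_m = \sum_{k=0}^{m-1}\tilde\e_{S_k}$. Since $\frac{p_0}{1-p_0}\ge 0$ is maximally skewed (that is the content of $A_2=A_1\tan(\pi\beta/2)$ in \eqref{HypotheseAttraction2}), its symmetrization $\tilde\e_0$ lies in the normal domain of attraction of the symmetric $\beta$-stable law with characteristic function $\exp(-A_1|\theta|^\beta)$, i.e. of $Z_1$, the tail constant being preserved under symmetrization. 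Hence $(n^{-1/\beta}\sum_{k=1}^n\tilde\e_k)_n$ converges to such a $Y$ and Lemma \ref{LemmaCVdansDenDimension2} applies directly: for $\beta\in(1,2)$ one gets $\tilde Z=Z$ and $\sigma=1$, while for $\beta=2$ the limit $\tilde Z$ is Gaussian of variance $\EE[(\frac{p_0}{1-p_0})^2]$, so that $\sigma=(\EE[(\frac{p_0}{1-p_0})^2])^{1/2}$.

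Case \textit{(a)}, where $\e_y=(-1)^y$, is the main obstacle, since $\tilde\e_y=(-1)^y\frac{p_y}{1-p_y}$ is neither identically distributed nor symmetric. Here I would first remove the mean: writing $m:=\EE[\frac{p_0}{1-p_0}]$ and using that $(-1)^{S_k}=(-1)^k$ for the $\pm1$ walk, the deterministic part $m\sum_y(-1)^yN_m(y)=m\sum_{k=0}^{m-1}(-1)^k$ is bounded, whence $D_m=\sum_y(-1)^y(\frac{p_y}{1-p_y}-m)N_m(y)+O(1)$. I would then reorganise even and odd sites through the identity
\begin{equation*}
\sum_y(-1)^y\pare{\tfrac{p_y}{1-p_y}-m}N_m(y)=\sum_y \tilde\e_y\, N_m(2y)+\sum_y\pare{\tfrac{p_{2y+1}}{1-p_{2y+1}}-m}\cro{N_m(2y)-N_m(2y+1)},
\end{equation*}
with the iid symmetric scenery $\tilde\e_y:=\frac{p_{2y}}{1-p_{2y}}-\frac{p_{2y+1}}{1-p_{2y+1}}$. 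The first term is a random walk in an iid symmetric scenery driven by the coarsened walk $R_j:=S_{2j}/2$ (a lazy random walk on $\ZZ$ with centered, square-integrable, aperiodic increments, since $N_m(2y)$ is exactly its occupation time at $y$), to which Lemma \ref{LemmaCVdansDenDimension2} applies.

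The delicate points are the error term and the constant. I expect the second sum above to be $o(n^\delta)$ in $L^2(\PP)$: conditionally on $S$ its variance is (for $\beta=2$) of order $\mathrm{Var}(\frac{p_0}{1-p_0})\sum_y[N_m(2y)-N_m(2y+1)]^2$, and since neighbouring occupation times of the $\pm1$ walk differ by $O(n^{1/4+o(1)})$ while only $O(n^{1/2})$ sites are visited, this sum is $O(n^{1+o(1)})=o(n^{2\delta})$, the analogous moment bound covering $\beta\in(1,2)$. Finally I would identify $\sigma$ by matching characteristic functions and by relating the local time of the coarsened walk $R$ to that of $B$ (the limit of $n^{-1/2}S$); the scenery scale $2\,\mathrm{Var}(\frac{p_0}{1-p_0})$ and the coarsening factors then combine to give exactly $\sigma=(\mathrm{Var}(\frac{p_0}{1-p_0}))^{1/2}$ when $\beta=2$ and $\sigma=1$ when $\beta\in(1,2)$, as claimed. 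The bookkeeping of these constants, together with the negligibility of the local-time-difference error, is where I expect the bulk of the technical effort to lie.
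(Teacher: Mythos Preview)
Your decomposition $X_m=D_m+\Sb_m$ and the reduction via Lemma \ref{cvps} match the paper exactly, as does your treatment of case (b). For case (a) with $\beta\in(1,2)$, your coarsening identity is precisely the content of the paper's Lemma \ref{souslem} (the paper pairs site $2y$ with $2y-1$ rather than $2y+1$, which is immaterial), and the paper then applies Lemma \ref{LemmaCVdansDenDimension2} to $\tilde S_k=S_{2k}/2$ and tracks the constants just as you outline.

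The one genuine difference is case (a) with $\beta=2$. The paper does \emph{not} coarsen here: it observes that $\sum_{k<n}(-1)^{S_k}\in\{0,1\}$, centers the scenery, and then follows the characteristic-function proof of \cite[Prop.~1]{GLN2} directly, using only that the characteristic function $\lambda$ of $\tfrac{p_0}{1-p_0}-m$ satisfies $\lambda(\e_y t)=1-\tfrac12\sigma^2 t^2+o(t^2)$ regardless of the sign $\e_y$. Your unified coarsening approach is also viable, and your $L^2$ bound on the local-time-difference error is the right refinement (the first-moment bound used in Lemma \ref{souslem} would fail at $\beta=2$ since then $\delta=3/4$). Note however that your $L^2$ estimate controls the error only at a fixed $t$; for convergence in $\mathcal D$ you still need either $n^{-3/4}\sup_{m\le n}|E_m|\to 0$ in probability or a separate tightness argument for $(n^{-3/4}D_{\floor{nt}})_t$. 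The latter goes through via the Kesten--Spitzer increment bound $\EE[(D_n-D_m)^2\mid S]=\mathrm{Var}(\tfrac{p_0}{1-p_0})\sum_y(N_n(y)-N_m(y))^2=O((n-m)^{3/2})$, since the alternating sign does not affect second moments; the paper's route simply inherits this tightness from \cite{KestenSpitzer}.
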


\begin{proof}[Proof of Proposition \ref{sec:TLF}]
We observe that $X_n$ can be rewritten
$$X_n= \sum_{k=0}^{n-1}\e_{S_k}\left(\xi_{k}-\frac{p_{S_k}}
     {1-p_{S_k}}\right) + \sum_{k=0}^{n-1}\e_{S_k}\frac{p_{S_k}}{1-p_{S_k}} .$$

%In the case $\beta=2$,
%let us write $E_n:=\sum_{k=0}^{n-1}\e_{S_k}\left(\xi_{S_k}^{(k)}-\frac{p_{S_k}}
%     {1-p_{S_k}}\right)$.
%According to Lemma \ref{cvps},  the sequence of processes
%$\left(\left(n^{-3/4} E_{\floor{nt}}   \right)_{t\ge 0}\right)_n$
%converges almost surely (in  ${\mathcal D}([0;+\infty),{\mathbb
%  R})$) to 0.
%Indeed, let $T>0$,
%on the event $\{n^{-3/4}E_n\rightarrow 0\}$, for every $m\ge 1$, we have
%$$\sup_{0\le t\le m/n} \left| \frac{E_{\floor{nt}}} {n^{3/4}}\right|
%   \le  \frac{\sup_{k\le m} |E_k|} {n^{3/4}}
%\ \ \mbox{and}\ \
%\sup_{(m/n)\le t\le T} \left| \frac{E_{\floor{nt}}} {n^{3/4}}\right|
%    \le   \sup_{k\ge m}\left|\frac{E_{k}} {k^{3/4}}\right| T^{3/4}.$$
%We conclude by taking $m$ large enough such that
%$\sup_{k\ge m} \frac{|E_{\floor k}|} {k^{3/4}} < \e /(2T^{3/4})$
%and then $n_0$ large enough such that
%$\frac{\sup_{k\le m} E_k} {n^{3/4}} <\e/2$ for every $n\ge n_0$.

\noindent
According to Lemma \ref{cvps},
it is enough to prove, under $\PP$,  the convergence
\begin{equation}\label{eqConvergenceP14}
\left(\left(n^{-\delta} \sum_{k=0}^{\floor{nt}-1} \e_{S_k}\frac{p_{S_k}}{1-p_{S_k}}
     ,n^{-1/2}S_{\floor {nt}}\right)_{t\ge 0}\right)_n\to_{n\to+\infty} (\sigma\Delta_t,B_t)_{t\ge 0}
\end{equation}
in distribution in ${\mathcal D}([0;+\infty),{\mathbb R}^2)$.

In case {\it (b)}, $\left(\tilde\e_y:=\e_y\frac{p_{y}}{1-p_{y}}\right)_y  $
is a sequence of independent identically distributed random variables with symmetric distribution
such that
$(n^{-1/\beta}\sum_{y=1}^n\tilde\e_y)_n$ converges in distribution to a random variable
with characteristic function $\theta\mapsto \exp(-A_1|\theta|^\beta)$, where $A_1:=\mathbb{E}(p_0^2/(1-p_0)^2)/2$ if $\beta=2$.
Hence the result follows from Lemma \ref{LemmaCVdansDenDimension2}.

In case {\it (a)} with $\beta=2$, we observe that $\sum_{k=0}^{n-1}\e_{S_k}$ is equal to 0
if $n$ is even and is equal to 1 if $n$ is odd. Hence,
$((n^{-3/4}\sum_{k=0}^{\floor{nt}-1}\e_{S_k})_{t\geq 0})_n$ converges to 0 in
${\mathcal D}([0;+\infty),{\mathbb
  R})$ and it remains to prove the convergence of
$$\left(\left(n^{-3/4} \sum_{k=0}^{\floor{nt}-1} \e_{S_k}\left(\frac{p_{S_k}}{1-p_{S_k}}
       -{\mathbb E}\left[\frac{p_0}{1-p_0}\right]\right)
     ,n^{-1/2}S_{\floor {nt}}\right)_{t\ge 0}\right)_n.$$
Let us write $\lambda$ for the characteristic function of $\frac{p_0}{1-p_0}-
    {\mathbb E}\left[\frac{p_0}{1-p_0}\right]$.
Since $\frac{p_0}{1-p_0}$ has a finite variance and $\lambda(\e_y\cdot)$ behaves as $\lambda$ at 0, we can follow
the proof of the convergence of the finite distributions of \cite[prop 1]{GLN2},
which gives the convergence in distribution in ${\mathcal D}([0;+\infty),{\mathbb
  R}^2)$ thanks to the tightness that can be proved for the first coordinate
as in \cite{KestenSpitzer}.

Now, let us explain how case {\it (a)} with $\beta\in(1,2)$ will also be deduced from
Lemma \ref{LemmaCVdansDenDimension2}. This comes from the following lemma.
\begin{lem}\label{souslem}
Let $\beta\in(1,2)$.
Let $S=(S_n)_n$ be a simple symmetric random walk on $\mathbb Z$ starting from $S_0=0$.
Let $(\tilde a_y)_{y\in\mathbb Z}$ be a sequence of iid random variables such that $\mathbb{E}(|\tilde{a}_0|)<\infty$,
independent of $S$.
We have
$$\left(n^{-\delta}\left(\sum_{k=0}^{\lfloor nt\rfloor-1}
    (-1)^k\tilde a_{S_k}-\sum_y
   \left(\tilde a_{2y}-\tilde a_{2y-1}\right)
    N_{\lfloor nt\rfloor}(2y) \right),0\right)_{t\ge 0}
    %\stackrel{\mathbb P}
    \longrightarrow (0,0) $$
in distribution as $n$ goes to infinity (in ${\mathcal D}([0;+\infty),{\mathbb
  R}^2)$), with $\delta:=\frac 12+\frac 1{2\delta}$.
\end{lem}
\begin{proof}[Proof of Lemma \ref{souslem}]
Let us write
$$
e_n:=\sum_{k=0}^{n-1} (-1)^k\tilde a_{S_k}-\sum_y
   \left(\tilde a_{2y}-\tilde a_{2y-1}\right)
    N_n(2y).$$
We notice that it is enough to prove that
$$n^{-\delta}\sup_{0\le k\le n}\left|e_k\right|\stackrel{\mathbb P}
   {\longrightarrow}_{n\rightarrow  +\infty}0 .$$
Let $\eta>0$ be such that $2\eta<\frac 1{2\beta}-\frac 14$ (such a $\eta$ exists
since $\beta<2$). For every $n\ge 1$, we consider the set $\Omega'_n$
defined by
$$\Omega'_n:=\left\{\sup_{k\le n}|S_k|\le n^{\frac 12+\eta},\ \sup_{0\le k\le n}
     \sup_{|y|\le n^{\frac 12+\eta}+1}|N_k(y)-N_k(y-1)|\le n^{\frac 14
         +\eta}\right\}. $$
Let us show that $\lim_{n\rightarrow+\infty}{\mathbb P}
  (\Omega'_n)=1$.
As in Lemma \ref{lemA_n}, we have,
$$\lim_{n\rightarrow+\infty}
{\mathbb P}\left(\sup_{k\le n}|S_k|\le n^{\frac 12+\eta}\right)=1.$$
Now we recall that for any even integer $m$,
$$ \sup_y{\mathbb E}[|N_n(y)-N_n(y-1)|^m]=O(n^{\frac m 4}),$$
as $n$ goes to infinity
(see \cite[lem 3]{KestenSpitzer} and \cite[p. 77]{JP}).
Hence, using again the Markov inequality and taking $m$ large enough, we get
$$
  {\mathbb P} (\Omega'_n) \ge 1-o(1)-3n^{\frac 32+\eta}\sup_{n,y}
          \frac{{\mathbb E}[|N_n(y)-N_n(y-1)|^m]}{n^{\frac m4+\eta m}}=1-o(1).
$$
On $\Omega'_n$, using the fact that
$$ \sum_{\ell=0}^{k-1} (-1)^\ell\tilde a_{S_\ell}=\sum_y\left(
     \tilde a_{2y}N_k(2y)-\tilde a_{2y-1}N_k(2y-1)\right),$$
for every $k=0,...,n$, we have
$$|e_k|=\left\vert\sum_y \tilde a_{2y-1} (N_k(2y)-N_k(2y-1))\right\vert
\le \sum_{|y|\le n^{\frac 12+\eta}+1}|\tilde a_y|n^{\frac 14+\eta}.$$
Hence, thanks to the Markov inequality, we get for $\theta>0$.
\begin{eqnarray*}
{\mathbb P}\left(n^{-\delta}\sup_{0\le k\le n}|e_k|>\theta\right) &\le&
(1-{\mathbb P}(\Omega'_n))+{\mathbb P}\left(\sum_{|y|\le n^{\frac 12+\eta}+1}|\tilde a_y|
         >\theta n^{\frac 12+\frac 1{2\beta}-\frac 14-\eta}\right)\\
&\le& (1-{\mathbb P}(\Omega'_n)) + \frac{3{\mathbb E}\left(|\tilde{a}_0|\right)}{
  \theta} n^{-\frac 1{2\beta}+\frac 14+2\eta}.
\end{eqnarray*}
Hence, for every $\theta>0$, we have $\lim_{n\rightarrow+\infty}{\mathbb P}(n^{-\delta}\sup_{0\le k\le n}|e_k|>
\theta)=0$.
\end{proof}
Now we observe that the characteristic function of $\tilde\e_y:= \frac {p_{2y}}{1-p_{2y}}
   -\frac{p_{2y-1}}
   {1-p_{2y-1}}$ is $t\mapsto \left|\tilde\chi\left( t\right)\right|^2$
(where $\tilde\chi$ stands for the characteristic function of $\frac {p_0}{1-p_0}$).
The distribution of $\tilde\e_0$ is symmetric and $(n^{-\frac 1\beta}\sum_{k=1}^n
\tilde\e_k)_n$ converges in distribution to a random variable with characteristic function
$\theta\mapsto \exp(-2A_1|\theta|^\beta)$.
According to Lemma \ref{LemmaCVdansDenDimension2} applied with the random walk $\left(\tilde
S_{k}:=\frac{S_{2k}}2\right)_k$, we have
\begin{equation*}
\left(n^{-\delta}\sum_{k=0}^{\lfloor nt\rfloor-1}\tilde\e_{\tilde S_k},
   n^{-\frac 12}\frac{S_{\lfloor 2n t \rfloor}}2\right)_{t\geq 0}
  \longrightarrow_{n\to+\infty}
    (\tilde\Delta_t,  \tilde B_t)_{t\geq 0},
\end{equation*}
in distribution in ${\mathcal D}([0;+\infty),{\mathbb R}^2)$,
where $( \tilde B_t)_t$ is a Brownian motion such that $Var(\tilde B_1)=\frac 12$ and with
$(\tilde L_t(x))_{t,x}$ the jointly continuous version
of its local time and where
$$\tilde\Delta_t:=\int_{\mathbb R}\tilde L_t(x)\, d\tilde Z_x ,$$
with $\tilde Z$ independent of $\tilde B$ given by two independent right continuous
stable processes $(\tilde Z_x)_{x\ge 0}$ and $(\tilde Z_{-x})_{x\ge 0}$, the
characteristic functions of $\tilde Z_1$ and of $\tilde Z_{-1}$ being
$\theta\mapsto \exp(-2A_1|\theta|^\beta)$.
Hence, we have
\begin{equation*}
\left(n^{-\delta}\sum_{k=0}^{\lfloor nt/2\rfloor-1}\tilde\e_{\tilde S_k},
   n^{-\frac 12}\frac{S_{\lfloor n t \rfloor}}2\right)_{t\geq 0}
  \longrightarrow_{n\to+\infty}
    (\tilde\Delta_{t/2},  \tilde B_{t/2})_{t\geq 0},
\end{equation*}
and so
\begin{equation*}
\left(n^{-\delta}\sum_{y}\tilde\e_{y}N_{\lfloor nt\rfloor}(2y),
   n^{-\frac 12} S_{\lfloor n t \rfloor} \right)_{t\geq 0}
  \longrightarrow_{n\to+\infty}
    (\tilde\Delta_{t/2},  B_{t})_{t\geq 0},
\end{equation*}
with $B_t:=2\tilde B_{t/2}$. Now we observe that
$$\tilde\Delta_{t/2}=\int_{\mathbb R}\tilde L_{t/2}(x)\, d\tilde Z_x
    =\int_{\mathbb R} L_t(2x)\, d\tilde Z_x=\int_{\mathbb R} L_t(x)\, d Z_x,$$
where $L$ denotes the local time of $B$ and with $Z_x:=\tilde Z_{x/2}$.
Now Lemma \ref{souslem} applied to $\left(\frac{p_y}{1-p_y}\right)_{y\in \mathbb Z}$
gives \eqref{eqConvergenceP14}, which proves
Proposition \ref{sec:TLF} in the case (a) with $\beta\in(1,2)$.
\end{proof}

\begin{proof}[Proof of Theorem \ref{sec:TL}]
We recall that for every $n$, we have
$$X_n = M_{T_n}^{(1)} \ \ \mbox{and}\ \ S_n=M_{T_n}^{(2)}. $$
Moreover we observe that we have
$$T_n=\sum_{k=0}^{n-1}\left(\xi_{k}+1\right), $$
that can be rewritten
$$T_n= \sum_{k=0}^{n-1}\left(\xi_k- \frac{p_{S_k}}{1-p_{S_k}} \right)
    + \sum_{k=0}^{n-1}\left( \frac{p_{S_k}}{1-p_{S_k}}
       - {\mathbb E}\left[\frac{p_0}{1-p_0}\right] \right)
    + n\left( 1+ {\mathbb E}\left[\frac{p_0}{1-p_0}\right]\right).
$$
We recall that $\gamma= 1+{\mathbb E}\left[\frac{p_0}{1-p_0}\right]$
and we define $(U_n)_n$ such that
$$U_n:=\max\{k\ge 0\, :\, T_k\le n\} .$$
We notice that the sequences of processes
$$ \left(n^{-1}
  \sum_{k=0}^{\lfloor n t\rfloor-1}\left(\xi_k- \frac{p_{S_k}}{1-p_{S_k}} \right),t\geq 0\right)_n
\ \ \mbox{and}\ \
\left(n^{-1} \sum_{k=0}^{\lfloor n t\rfloor-1}\left( \frac{p_{S_k}}{1-p_{S_k}}
       - {\mathbb E}\left[\frac{p_0}{1-p_0}\right] \right),t\geq 0
\right)_n
$$
converge in distribution in $\mathcal{D}([0,+\infty),\RR)$ to 0.
The first convergence follows from Lemma \ref{cvps} where we take $\e_k=1$ for every
$k\in\Z$. The second convergence is a consequence of
\cite[Thm 1.1]{KestenSpitzer} since $n^{\delta}/n\to 0$ as $n\to+\infty$.
Hence $\left(n^{-1}T_{\lfloor n t \rfloor},t\geq 0 \right)_n$ converges in distribution
to $(\gamma t)_t$,
%and $\left(n^{-1}U_{\lfloor n t \rfloor},t\geq 0\right)_n$ converges almost surely to $(1/\gamma$.
%   and to the strong law of large number
%for random walks in random sceneries \cite{},
We conclude that $\left(\left(n^{-1} U_{\floor{nt}}\right)_{t\geq 0}\right)_n$
converges in distribution (in ${\mathcal D}([0;+\infty),{\mathbb R})$)
to $(t/\gamma)_t$.
%(as we did for $\left(\left(n^{-3/4} E_{\floor{nt}}   \right)_{t\ge 0}\right)_n$
%in the beginning of the proof of Proposition \ref{sec:TLF}).
Therefore, according to Proposition \ref{sec:TLF} and to \cite[Lem p. 151, Thm 3.9]{Bil},
the sequence of processes
$$\left(\left(n^{-\delta}X_{U_{\floor {nt}}},n^{-1/2}S_{U_{\floor {nt}}}\right)_{t\ge 0}\right)_n$$
converges in distribution (in ${\mathcal D}([0;+\infty),{\mathbb
  R}^2)$) to
$(\sigma\Delta_{\frac t\gamma},B_{\frac t\gamma})_{t\ge 0}$. This means that
$$\left(\left(n^{-\delta}M^{(1)}_{T_{U_{\floor {nt}}}},n^{-1/2}
   M^{(2)}_{T_{U_{\floor {nt}}}} \right)_{t\ge 0}\right)_n$$
converges in distribution (in ${\mathcal D}([0;+\infty),{\mathbb
  R}^2)$) to
$(\sigma\Delta_{\frac t\gamma},B_{\frac t\gamma})_{t\ge 0}$.

Moreover, we have $B_{\frac t\gamma}=\gamma^{-1/2}B'_t$ and $Z_{\frac x{\sqrt{\gamma}}}
  = \gamma^{-1/(2\beta)}Z'_x$, where $(B'_t)_{t\geq 0}$ is a standard Brownian motion, and
$(Z'_x)_{x\in\mathbb R}$ has the same distribution as $(Z_x)_{x\in\mathbb R}$ and
is independent of $(B'_t)_{t\geq 0}$.
Furthermore we have
$$L_{\frac t\gamma}(x)=\gamma^{-1/2}L'_t(\gamma^{1/2}x),\qquad t\geq 0,\ x\in\RR,$$
where $(L'_t)_{t\geq 0}$ is the local time of $(B'_t)_t$ and so
$$\Delta_{\frac t\gamma}=\gamma^{-\frac 12}\int_{\mathbb R}
      L'_t(\gamma^{\frac 12}x)\, dZ_x=\gamma^{-\delta}\int_{\mathbb R}
      L'_t(y)\, dZ'_y.$$
Hence
$(\sigma\Delta_{\frac t\gamma},B_{\frac t\gamma})_{t\ge 0}$
has the same distribution as $(\sigma\gamma^{-\delta}
    \Delta_t,\gamma^{-\frac 12}B_t)_{t\ge 0}$.

Now we observe that we have
$$ M^{(2)}_{\floor{nt}}=
   M^{(2)}_{T_{U_{\floor {nt}}}}\ \ \mbox{and}\ \
    \left| M^{(1)}_{\floor{nt}}-
   M^{(1)}_{T_{U_{\floor {nt}}}}\right|\le \xi_{U_{\floor{nt}}} $$
and that for every $\theta>0$ and $T>0$,
\begin{eqnarray}
{\mathbb P}\left(\sup_{t\in[0;T]}n^{-\delta}\xi_{U_{\floor{nt}}}
       \ge \theta\right)&\le&
   \sum_{k=0}^{nT}{\mathbb P}\left( \xi_{k}\ge \theta n^{\delta}\right)\nonumber\\
   &\le& \sum_{k=0}^{nT}\frac{{\mathbb E}[(\xi_0)^{\beta-\eta}]}{(\theta n^\delta)^{\beta-\eta}}= o(1)
\label{eqEntre2sauts}
\end{eqnarray}
for $\eta>0$ small enough, since $\delta\beta>1$
and since \footnote{This comes from the H\"older inequality since
${\mathbb E}\left[(\xi_0)^2|p_0\right]\le \frac 2{(1-p_0)^2}$.}
%To prove this, we can notice, for example, that conditionally on $p_0$,
%$\xi_0$ is stochastically dominated by an exponential variable $V$ of parameter $-\ln p_0$
%since
%$\lfloor V\rfloor=_{law}\xi_0$, and that $\EE(V^{\beta-\eta}\vert p_0)=\Gamma(\beta-\eta+1)
%/(-\ln p_0)^{\beta-\eta}$.
(if $\eta<\beta-1$)
${\mathbb E}[(\xi_0)^{\beta-\eta}\vert p_0]\leq C \frac{1}{(1-p_0)^{\beta-\eta}}$ a.s.
and $\EE\left[\frac{1}{(1-p_0)^{\beta-\eta}}\right]<\infty$.
This completes the proof of Theorem \ref{sec:TL}.
\end{proof}

%%%%%%%%%%%%%%%%%%%%%%%%%%%%%%%%%%%%%%%%%%%%%%%%%%%%%%%%%%%%%%%%%%%%%%%%%%%%%%%%%%%%%%%%%%%%%%%%%%%%%%%

%%%%%%%%%%%%%%%%%%%%%%%%%%%%%%%%%%%%%%%%%%%%%%%%%%%%%%%%%%%%%%%%%%%%%%%%%%%%%%%%%%%%%%%%%%%%%%%%%%%%%%%

\section{Proof of Lemma \ref{LemmaCVdansDenDimension2}}\label{SectionPreuvDuLemmeCV}
The proof is very similar to those in \cite{KestenSpitzer} and \cite{GLN2}, with some adaptations.
%; we give it for the sake of completeness.

We define $\tilde D_n:=\sum_{y\in\mathbb Z}\tilde\e_y N_{n}(y), \ n\in\NN$.

\begin{lem}\label{LemmaCVFiniDimensionnelle}
If $\beta\in(1,2]$,
the finite dimensional distributions of $(\tilde D_{\lfloor n t\rfloor}/n^{\delta},
S_{\lfloor n t \rfloor}/\sqrt{n})_{t\geq 0}$ converge to those of $(\tilde\Delta_t, \tilde B_t)_{t\geq 0}$.
\end{lem}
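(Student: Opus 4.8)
The plan is to prove convergence of the finite-dimensional distributions via the characteristic function / method of moments combined with a conditioning argument, exactly in the spirit of Kesten--Spitzer \cite{KestenSpitzer}. The key observation is that $\tilde D_n=\sum_{y\in\mathbb Z}\tilde\e_y N_n(y)$ is, conditionally on the random walk $S$ (hence on the occupation field $(N_n(y))_y$), a sum of independent symmetric random variables $\tilde\e_y N_n(y)$. So I would first condition on $S$ and use the independence and symmetry of the $\tilde\e_y$ to write, for a single time $t$,

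$$\mathbb E\bigl[e^{i\theta \tilde D_{\lfloor nt\rfloor}/n^\delta}\,\big|\,S\bigr]=\prod_{y\in\mathbb Z}\varphi\!\left(\frac{\theta N_{\lfloor nt\rfloor}(y)}{n^\delta}\right),$$

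where $\varphi$ is the characteristic function of $\tilde\e_0$. Using $\delta=\tfrac12+\tfrac1{2\beta}$ and the domain-of-attraction hypothesis $\varphi(u)=1-A_1|u|^\beta+o(|u|^\beta)$ as $u\to0$, the logarithm of this conditional characteristic function is, to leading order, $-A_1|\theta|^\beta n^{-\delta\beta}\sum_y N_{\lfloor nt\rfloor}(y)^\beta$. The natural scaling identity $N_{\lfloor nt\rfloor}(y)\approx \sqrt n\, L_t(y/\sqrt n)$ together with $\delta\beta=\tfrac{\beta+1}2$ then yields $n^{-\delta\beta}\sum_y N_{\lfloor nt\rfloor}(y)^\beta\to \int_{\mathbb R}L_t(x)^\beta\,dx$ in distribution (jointly with $S_{\lfloor nt\rfloor}/\sqrt n\to \tilde B_t$, by Donsker plus continuity of occupation measures).

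The second step is to identify the limit as $\tilde\Delta_t=\int_{\mathbb R}\tilde L_t(x)\,d\tilde Z_x$. Conditionally on $\tilde B$ (equivalently on its local time $\tilde L_t$), the stochastic integral $\int \tilde L_t(x)\,d\tilde Z_x$ against the two-sided stable process $\tilde Z$ is a symmetric $\beta$-stable variable whose scale is exactly $\bigl(A_1\int_{\mathbb R}\tilde L_t(x)^\beta\,dx\bigr)^{1/\beta}$, so its conditional characteristic function is $\exp\!\bigl(-A_1|\theta|^\beta\int \tilde L_t(x)^\beta dx\bigr)$. Matching this against the limit of the conditional characteristic functions above gives convergence of the joint characteristic function $\mathbb E[e^{i\theta \tilde D_{\lfloor nt\rfloor}/n^\delta+i\lambda S_{\lfloor nt\rfloor}/\sqrt n}]$ by dominated convergence on the $S$-side. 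For several times $0\le t_1<\dots<t_m$ I would run the same computation with a vector parameter, the point being that conditionally on the whole path $S$ the increments of $\tilde D$ over disjoint ranges of $y$ are still built from the same independent $\tilde\e_y$; the joint conditional characteristic function factorizes over $y$ and produces $\exp\!\bigl(-A_1\int_{\mathbb R}|\sum_j\theta_j \tilde L_{t_j}(x)|^\beta dx\bigr)$ in the limit, which is precisely the conditional characteristic function of $(\tilde\Delta_{t_1},\dots,\tilde\Delta_{t_m})$ given $\tilde B$.

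The main obstacle will be making the passage from the discrete sum $n^{-\delta\beta}\sum_y N_{\lfloor nt\rfloor}(y)^\beta$ to the integral $\int \tilde L_t(x)^\beta dx$ rigorous, and more delicately controlling the error term in $\varphi(u)=1-A_1|u|^\beta+o(|u|^\beta)$ uniformly over the relevant range of $y$. The $o(|u|^\beta)$ is only pointwise near $0$, so I must show that the dominant contribution comes from sites $y$ with $N_{\lfloor nt\rfloor}(y)$ of order $\sqrt n$ (where $\theta N_{\lfloor nt\rfloor}(y)/n^\delta$ is of order $n^{1/2-\delta}=n^{-1/(2\beta)}\to0$), and that the total contribution of all sites is tight; this is exactly where the moment bounds $\sup_y\mathbb E[N_n(y)^p]=O(n^{p/2})$ from \cite[Lem.~1]{KestenSpitzer} (already invoked in Lemma \ref{lemA_n}) and the local-time convergence theory are needed. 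Once the finite-dimensional characteristic functions converge, Lévy's continuity theorem conditionally on $S$, combined with boundedness of the conditional characteristic functions and dominated convergence, completes the identification of the finite-dimensional distributions of $(\tilde D_{\lfloor nt\rfloor}/n^\delta, S_{\lfloor nt\rfloor}/\sqrt n)$ with those of $(\tilde\Delta_t,\tilde B_t)$.
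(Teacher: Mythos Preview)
Your proposal is correct and follows essentially the same route as the paper: the paper also conditions on $S$ to write $\phi_n$ as a product of $\tilde\lambda(\cdot)$'s, replaces this by $\psi_n=\mathbb E[\exp(-A_0\sum_y|\sum_j\theta_j^{(1)}N_{\lfloor nt_j\rfloor}(y)n^{-\delta}|^\beta)\exp(i\sum_j\theta_j^{(2)}S_{\lfloor nt_j\rfloor}/\sqrt n)]$ via the domain-of-attraction expansion $1-\tilde\lambda(\theta)\sim A_0|\theta|^\beta$, proves the joint convergence of $\bigl(n^{-\delta\beta}\sum_y|\sum_j\theta_j^{(1)}N_{\lfloor nt_j\rfloor}(y)|^\beta,\ \sum_j\theta_j^{(2)}S_{\lfloor nt_j\rfloor}/\sqrt n\bigr)$ to $\bigl(\int|\sum_j\theta_j^{(1)}\tilde L_{t_j}(x)|^\beta dx,\ \sum_j\theta_j^{(2)}\tilde B_{t_j}\bigr)$ by a Kesten--Spitzer occupation-time discretization, and identifies the limit as the conditional characteristic function of $(\tilde\Delta_{t_j})_j$ given $\tilde B$. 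The only point to sharpen in your write-up is the final ``dominated convergence'' remark: since $S$ itself varies with $n$, what is really used is the continuous mapping theorem applied to the bounded continuous map $(u,v)\mapsto e^{-A_0 u+iv}$ together with the joint weak convergence you already stated, rather than Lévy's theorem conditionally on a fixed $S$.
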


Before proving Lemma \ref{LemmaCVFiniDimensionnelle}, we first introduce some preliminary results.

We observe that
$n^{-1/\beta}\sum_{y=1}^{n}\tilde\e_y$  converges in distribution
to a stable random variable of parameter $\beta$,
with characteristic function $\widetilde{\zeta}_\beta(\theta):=\exp(-A_0|\theta|^{\beta})$
(for some $A_0>0$).
We can now compute the characteristic function of the finite dimensional distributions of
$(\tilde\Delta_t, \tilde B_t)_{t\geq 0}$.

\begin{lem} \label{LemmaCharacteriticFunctionFiniteDimensional}
Let $k\in\NN^*$, $(t_1, t_2,\dots, t_k)\in\RR_+^k$ and $(\theta_1^{(i)}, \theta_2^{(i)},\dots, \theta_k^{(i)})_{i=1,2}\in\RR^{2k}$. We have,
\begin{eqnarray}
\EE\left[\exp\left(i\sum_{j=1}^k \big(\theta_j^{(1)}\tilde\Delta_{t_j}+\theta_j^{(2)}\tilde B_{t_j}\big)
\right)\right]&=&
\EE\left[
\exp\left(-A_0\int_{-\infty}^{+\infty}\left|\sum_{j=1}^k \theta_j^{(1)}\tilde L_{t_j}(x)
\right|^\beta d x\right)
\exp\left(i\sum_{j=1}^k \theta_j^{(2)}\tilde B_{t_j}\right)
\right].\nonumber\\
&&\label{eqCharacteriticFunctionFiniteDimensional}
\end{eqnarray}
\end{lem}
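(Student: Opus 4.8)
The plan is to condition on the Brownian motion $\tilde B$ and to exploit the independence of $\tilde Z$ from $\tilde B$. Fix $k$, the times $t_1,\dots,t_k$ and the coefficients $(\theta_j^{(i)})$. By linearity of the stochastic integral against $\tilde Z$ (defined as in \cite{KestenSpitzer}), I first write
$$\sum_{j=1}^k \theta_j^{(1)}\tilde\Delta_{t_j}=\int_{\mathbb R} f(x)\, d\tilde Z_x,\qquad f(x):=\sum_{j=1}^k \theta_j^{(1)}\tilde L_{t_j}(x).$$
Since the local times are jointly continuous and, for each fixed $t$, the map $x\mapsto \tilde L_t(x)$ is continuous with compact support (contained in the range of $\tilde B$ on $[0,t]$), the function $f$ is a continuous, compactly supported and $\sigma(\tilde B)$-measurable function; in particular $\int_{\mathbb R}|f(x)|^\beta\, dx<\infty$ almost surely.

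The key step is the conditional computation. Given $\tilde B$, and hence given the deterministic function $f$, the random variable $\int_{\mathbb R} f\, d\tilde Z$ is the integral of a deterministic function against the stable process $\tilde Z$, which is independent of $\tilde B$. I claim that
$$\EE\left[\exp\left(i\int_{\mathbb R} f(x)\, d\tilde Z_x\right)\,\Big|\,\tilde B\right]=\exp\left(-A_0\int_{\mathbb R}|f(x)|^\beta\, dx\right).$$
To prove this I approximate $f$ by step functions $g=\sum_\ell c_\ell \mathbf 1_{(a_\ell,b_\ell]}$. For such $g$ one has $\int g\, d\tilde Z=\sum_\ell c_\ell(\tilde Z_{b_\ell}-\tilde Z_{a_\ell})$, and using the independence and stationarity of the increments of the two halves $(\tilde Z_x)_{x\ge 0}$ and $(\tilde Z_{-x})_{x\ge 0}$ together with the (symmetric) characteristic function $\EE[e^{i\theta \tilde Z_1}]=\exp(-A_0|\theta|^\beta)$, I obtain
$$\EE\left[\exp\left(i\int_{\mathbb R} g\, d\tilde Z\right)\,\Big|\,\tilde B\right]=\prod_\ell \exp\left(-A_0(b_\ell-a_\ell)|c_\ell|^\beta\right)=\exp\left(-A_0\int_{\mathbb R}|g(x)|^\beta\, dx\right).$$
Here the independence of the two halves is what makes the product factorize into a single integral over all of $\mathbb R$. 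Passing to the limit along $g\to f$ in $L^\beta$ (so that $\int|g|^\beta\to\int|f|^\beta$ and $\int g\, d\tilde Z\to\int f\, d\tilde Z$ in probability) and using the continuity of characteristic functions yields the claim.

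Finally, since the factor $\exp\big(i\sum_j \theta_j^{(2)}\tilde B_{t_j}\big)$ is $\sigma(\tilde B)$-measurable, I pull it out of the conditional expectation and then take the outer expectation over $\tilde B$:
$$\EE\left[\exp\left(i\sum_{j=1}^k\big(\theta_j^{(1)}\tilde\Delta_{t_j}+\theta_j^{(2)}\tilde B_{t_j}\big)\right)\right]=\EE\left[\exp\left(i\sum_{j=1}^k\theta_j^{(2)}\tilde B_{t_j}\right)\EE\left[\exp\left(i\int_{\mathbb R}f\,d\tilde Z\right)\,\Big|\,\tilde B\right]\right].$$
Substituting the conditional characteristic function and the definition of $f$ gives exactly \eqref{eqCharacteriticFunctionFiniteDimensional}. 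The main obstacle is the justification of the conditional characteristic-function formula for the stable integral, namely the step-function approximation and the passage to the limit, together with the bookkeeping that the two-sided construction of $\tilde Z$ from two independent stable processes produces the single integral $\int_{\mathbb R}|f|^\beta$ over the whole real line.
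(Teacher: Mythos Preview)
Your proof is correct and follows the same approach as the paper: condition on $\tilde B$, use the independence of $\tilde Z$ from $\tilde B$ to compute the conditional characteristic function of the stable integral (via the step-function approximation of \cite[Lem 5]{KestenSpitzer}), and then take the outer expectation. You have simply written out in detail what the paper compresses into a one-line reference to Kesten--Spitzer.
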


\begin{proof} We condition by $\tilde B$ and we proceed as in
\cite[Lem 5]{KestenSpitzer}. We get
\begin{equation*}
\EE\left[\left.\exp\left(i\sum_{j=1}^k \theta_j^{(1)}\tilde\Delta_{t_j}
\right)\right|\tilde B\right]=
\exp\left(-A_0\int_{-\infty}^{+\infty}\left|\sum_{j=1}^k \theta_j^{(1)}\tilde
    L_{t_j}(x)\right|^\beta d x\right),
\end{equation*}
which gives the result.
\end{proof}

For fixed
$k\in\NN^*$ and $(t_1, t_2,\dots, t_k)\in\RR_+^k$, we define for every $(\theta_1, \theta_2,\dots, \theta_k)\in(\RR^{2})^k$,
\begin{equation*}
\psi_n(\theta_1,\theta_2,\dots,\theta_k)
:=    \EE\left[\exp\left(-A_0\sum_{y\in\Z}\left|\sum_{j=1}^k\theta_j^{(1)}N_{\lfloor n t_j\rfloor}(y)n^{-\delta}\right|^\beta\right)
    \exp\left(i\sum_{j=1}^k \theta_j^{(2)}\frac{S_{\lfloor n t_j\rfloor}}{\sqrt{n}}\right)
    \right]
\end{equation*}
and
\begin{eqnarray*}
\phi_n(\theta_1,\theta_2,\dots,\theta_k)
&:=& \EE\left[
    \exp\left(i\sum_{j=1}^k \big(\theta_j^{(1)}n^{-\delta}\tilde D_{\lfloor n t_j\rfloor}
    +\theta_j^{(2)}\frac{S_{\lfloor nt_j\rfloor}}{\sqrt{n}}\big)
    \right)\right]\\
& = &
    \EE\left[\prod_{y\in\Z}\tilde{\lambda}\left(\sum_{j=1}^k\theta_j^{(1)}
   N_{\lfloor n t_j\rfloor}(y)n^{-\delta}\right)
    \exp\left(i\sum_{j=1}^k \theta_j^{(2)}\frac{S_{\lfloor n t_j\rfloor}}{\sqrt{n}}\right)
    \right]
\end{eqnarray*}
where $\tilde{\lambda}(\theta):=\EE[\exp(i\theta \tilde\e_0)]$ for every $\theta\in\RR$
and $\theta_j=(\theta_j^{(1)},\theta_j^{(2)})$ for every $j\in\{1,\dots,n\}$.

\begin{lem}
For every $k\in\NN^*$, $(t_1, t_2,\dots, t_k)\in\RR_+^k$ and  $(\theta_1, \theta_2,\dots, \theta_k)\in(\RR^{2})^k$,
\begin{equation*}\label{LienPsiPhi}
\lim_{n\to+\infty}|\psi_n(\theta_1,\theta_2,\dots,\theta_k)
-\phi_n(\theta_1,\theta_2,\dots,\theta_k)
|=0.
\end{equation*}
\end{lem}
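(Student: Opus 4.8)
The plan is to condition on the random walk $S$ and to exploit that both $\psi_n$ and $\phi_n$ carry the same oscillating factor coming from $S$. Writing $v_y:=n^{-\delta}\sum_{j=1}^k\theta_j^{(1)}N_{\lfloor nt_j\rfloor}(y)$, which depends on $S$ alone and vanishes off the range of $S$, the two quantities are both of the form $\EE[(\text{scenery factor})\exp(i\sum_{j}\theta_j^{(2)}S_{\lfloor nt_j\rfloor}/\sqrt n)]$, the scenery factor being $\prod_{y\in\Z}e^{-A_0|v_y|^\beta}$ for $\psi_n$ and $\prod_{y\in\Z}\tilde\lambda(v_y)$ for $\phi_n$ (the latter by the second displayed expression defining $\phi_n$). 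Since the common exponential factor has modulus one, it cancels in the difference and I would reduce the statement to
\begin{equation*}
|\psi_n-\phi_n|\le\EE\left[\left|\prod_{y\in\Z}\tilde\lambda(v_y)-\prod_{y\in\Z}e^{-A_0|v_y|^\beta}\right|\right],
\end{equation*}
where both products are finite since $v_y=0$ off the range of $S$.

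First I would introduce a good event depending only on $S$. As in Lemma \ref{lemA_n} and \cite[Lem. 1]{KestenSpitzer}, for a small $\epsilon>0$ the event $\Omega_n:=\{\max_{1\le j\le k}\max_y N_{\lfloor nt_j\rfloor}(y)\le n^{1/2+\epsilon}\}$ satisfies $\PP(\Omega_n^c)\to0$. Because $\delta=\frac12+\frac1{2\beta}$, on $\Omega_n$ one has the uniform bound $\max_y|v_y|\le C\,n^{-\delta}n^{1/2+\epsilon}=C\,n^{-\frac1{2\beta}+\epsilon}\to0$. As both integrands above have modulus at most $1$, the contribution of $\Omega_n^c$ is at most $2\PP(\Omega_n^c)\to0$, so only the integral over $\Omega_n$ remains.

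On $\Omega_n$ I would combine the elementary inequality $|\prod_y a_y-\prod_y b_y|\le\sum_y|a_y-b_y|$ (valid for $|a_y|,|b_y|\le1$) with a Taylor estimate for $\tilde\lambda$. Since $\tilde\lambda$ is the characteristic function of a symmetric variable lying in the normal domain of attraction of the stable law with characteristic function $\exp(-A_0|\theta|^\beta)$, one has $\tilde\lambda(\theta)=1-A_0|\theta|^\beta+o(|\theta|^\beta)$ as $\theta\to0$, while $e^{-A_0|\theta|^\beta}=1-A_0|\theta|^\beta+O(|\theta|^{2\beta})$. As $\max_y|v_y|\to0$ uniformly on $\Omega_n$, for every $\eta>0$ and $n$ large these expansions give $|\tilde\lambda(v_y)-e^{-A_0|v_y|^\beta}|\le\eta|v_y|^\beta$ uniformly in $y$, whence on $\Omega_n$
\begin{equation*}
\left|\prod_{y}\tilde\lambda(v_y)-\prod_{y}e^{-A_0|v_y|^\beta}\right|\le\eta\sum_{y}|v_y|^\beta.
\end{equation*}
After taking expectations it then suffices to bound $\EE[\sum_y|v_y|^\beta]=O(1)$. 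Using $|v_y|^\beta\le C_k\,n^{-\delta\beta}\sum_j|\theta_j^{(1)}|^\beta N_{\lfloor nt_j\rfloor}(y)^\beta$ and $N_{\lfloor nt_j\rfloor}(y)\le N_{\lfloor nT\rfloor}(y)$ with $T:=\max_j t_j$, this reduces to the Kesten--Spitzer type estimate $\EE[\sum_y N_m(y)^\beta]=O(m^{(\beta+1)/2})=O(m^{\delta\beta})$, which follows from Jensen's inequality $\EE[N_m(y)^\beta]\le(\EE[N_m(y)^2])^{\beta/2}$, the bound $\EE[N_m(y)^2]=O(\sqrt m\,\EE[N_m(y)])$, and the local limit estimates that $\EE[N_m(\cdot)]$ is of order $\sqrt m$ and concentrated on $|y|\lesssim\sqrt m$ (as in \cite{KestenSpitzer}). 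Combining everything, $|\psi_n-\phi_n|\le2\PP(\Omega_n^c)+\eta\,O(1)$, and letting $n\to\infty$ and then $\eta\to0$ yields the claim. I expect the main obstacle to be this last moment bound: obtaining the sharp exponent $m^{(\beta+1)/2}$ for $\EE[\sum_y N_m(y)^\beta]$ with non-integer $\beta$, for which the crude estimate coming from $\max_y N_m(y)$ on $\Omega_n$ is insufficient and the finer local limit control of the occupation measure, as in \cite{KestenSpitzer}, is needed.
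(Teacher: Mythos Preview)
Your approach is correct and is essentially the same as the paper's: condition on $S$, use the asymptotics $1-\tilde\lambda(\theta)\sim A_0|\theta|^\beta$ (so that $|\tilde\lambda(v_y)-e^{-A_0|v_y|^\beta}|\le\eta|v_y|^\beta$ once $\max_y|v_y|$ is small), control $\max_y|v_y|\to 0$ in probability, and bound $\EE[\sum_y|v_y|^\beta]$ uniformly in $n$. The paper simply cites \cite[Lem.~5]{GLN2} for the reduction and \cite[Lem.~3.3]{DGP} for the moment bound, whereas you spell these steps out; for the latter you might find it cleaner to use Lyapunov's interpolation $\EE[N_m(y)^\beta]\le(\EE[N_m(y)])^{2-\beta}(\EE[N_m(y)^2])^{\beta-1}$ together with $\EE[N_m(y)^2]\le C\sqrt m\,\EE[N_m(y)]$ and $\sum_y\EE[N_m(y)]=m$, which gives $\sum_y\EE[N_m(y)^\beta]\le C\,m^{(\beta+1)/2}$ directly without appealing to spatial concentration of $\EE[N_m(\cdot)]$.
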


\begin{proof}
As in \cite[p. 7]{KestenSpitzer}, we have $1-\tilde{\lambda}(\theta)\sim_{\theta\to 0} A_0
|\theta|^\beta$ since the distribution of $\tilde\e_0$ belongs to the normal domain of
attraction of
the stable distribution with characteristic function $\widetilde{\zeta}_\beta$. The remainder
of the proof is the same as in \cite[Lem 5]{GLN2} with $\delta$ instead of $3/4$ and $\beta$
instead of $2$, since $\PP(n^{-\delta} \sup_{y\in\Z}|\sum_{j=1}^k\theta_j^{(1)}
N_{\lfloor n t_j\rfloor}(y)|>\e)\to_{n\to+\infty}0$ for $\e>0$ by \cite[Lem 4]{KestenSpitzer}
and since we have $\EE(\sum_{y\in\Z}|  n^{-\delta}\sum_{j=1}^k \theta_j^{(1)} N_{\lfloor n t
 _j\rfloor}(y) |^\beta)\leq C<\infty$ by \cite[Lem 3.3]{DGP}.
\end{proof}

We now prove

\begin{lem}\label{LemmeCVjointe}
For every $k\in\NN^*$, $(t_1, t_2,\dots, t_k)\in\RR_+^k$ and $(\theta_1, \theta_2,\dots, \theta_k)\in(\RR^{2})^k$,
\begin{equation*}
\left(
n^{-\delta \beta}\sum_{y\in\Z}\left|\sum_{j=1}^k \theta_j^{(1)}N_{\lfloor n t_j\rfloor}(y)\right|^\beta,
\sum_{j=1}^k\theta_j^{(2)} S_{\lfloor n t_j\rfloor}/\sqrt{n}
\right)_n
\end{equation*}
converges in distribution as $n\to+\infty$ to
\begin{equation*}
\left(\int_{-\infty}^{+\infty}\left|\sum_{j=1}^k \theta_j^{(1)}\tilde L_{t_j}(x)\right|^\beta d x ,
\sum_{j=1}^k \theta_j^{(2)} \tilde B_{t_j} \right).
\end{equation*}

\end{lem}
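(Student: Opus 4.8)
The plan is to follow Kesten and Spitzer \cite[Lem 6]{KestenSpitzer}, adapted to a general stable index $\beta\in(1,2]$, by realising both coordinates as continuous functionals of the random walk path together with its field of local times. First I would record the exact scaling that makes this work. Since $\delta=\frac12+\frac1{2\beta}$, one has $\delta\beta=\frac{\beta+1}2$, and hence, writing $\ell_n(x,t):=n^{-1/2}N_{\lfloor nt\rfloor}(\lfloor \sqrt n\,x\rfloor)$ for the rescaled occupation field, the first coordinate is exactly a Riemann sum over the lattice $n^{-1/2}\Z$:
\begin{equation*}
n^{-\delta\beta}\sum_{y\in\Z}\left|\sum_{j=1}^k\theta_j^{(1)}N_{\lfloor nt_j\rfloor}(y)\right|^\beta
=\frac1{\sqrt n}\sum_{y\in\Z}\left|\sum_{j=1}^k\theta_j^{(1)}\ell_n(y/\sqrt n,t_j)\right|^\beta .
\end{equation*}

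Second, I would invoke the joint invariance principle for the walk and its local times. By Donsker's theorem $(S_{\lfloor nt\rfloor}/\sqrt n)_t$ converges in distribution to $(\tilde B_t)_t$; and by the local time invariance principle underlying \cite{KestenSpitzer}, the rescaled field $(\ell_n(\cdot,t_j))_{1\le j\le k}$ converges, uniformly on compacts in $x$, to $(\tilde L_{t_j}(\cdot))_{1\le j\le k}$, jointly with $(S_{\lfloor nt_j\rfloor}/\sqrt n)_{1\le j\le k}\to(\tilde B_{t_j})_{1\le j\le k}$. Using Skorokhod's representation theorem I would realise this convergence almost surely on a common probability space.

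Third, I would pass to the limit in the Riemann sum. The limiting local times $\tilde L_{t_j}$ are almost surely continuous and compactly supported, and by Lemma \ref{lemA_n} the occupation numbers $N_{\lfloor nt_j\rfloor}(y)$ vanish for $|y|>n^{1/2+\delta_1}$, so the sum is effectively over a bounded range of $x$. The almost sure uniform convergence $\ell_n(\cdot,t_j)\to\tilde L_{t_j}(\cdot)$ on that range then gives
\begin{equation*}
\frac1{\sqrt n}\sum_{y\in\Z}\left|\sum_{j=1}^k\theta_j^{(1)}\ell_n(y/\sqrt n,t_j)\right|^\beta
\longrightarrow\int_{-\infty}^{+\infty}\left|\sum_{j=1}^k\theta_j^{(1)}\tilde L_{t_j}(x)\right|^\beta dx,
\end{equation*}
while $\sum_{j=1}^k\theta_j^{(2)}S_{\lfloor nt_j\rfloor}/\sqrt n\to\sum_{j=1}^k\theta_j^{(2)}\tilde B_{t_j}$ and the joint convergence is automatic under the almost sure realisation. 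The uniform moment bound $\EE(\sum_{y}|n^{-\delta}\sum_j\theta_j^{(1)}N_{\lfloor nt_j\rfloor}(y)|^\beta)\le C$ from \cite[Lem 3.3]{DGP} furnishes the uniform integrability needed to discard tail contributions.

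The hard part will be this last passage to the limit for the local time functional: one must keep the $\beta$-th power of the occupation field under control uniformly in $n$, so that the Riemann-sum convergence is not destroyed by mass escaping to the boundary of the effective support. The moment estimate of \cite[Lem 3.3]{DGP}, together with the sup bound $\PP(n^{-\delta}\sup_{y}|\sum_j\theta_j^{(1)}N_{\lfloor nt_j\rfloor}(y)|>\e)\to0$ from \cite[Lem 4]{KestenSpitzer}, is precisely what renders the functional $L\mapsto\int|\sum_j\theta_j^{(1)}L_{t_j}(x)|^\beta\,dx$ continuous along the approximating sequence and hence makes the argument go through.
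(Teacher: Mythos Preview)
Your strategy is genuinely different from the paper's, and as written it has a gap at the key step. You invoke a ``local time invariance principle underlying \cite{KestenSpitzer}'' giving uniform-on-compacts convergence of the rescaled occupation field $\ell_n(\cdot,t_j)$ to $\tilde L_{t_j}(\cdot)$, jointly with the walk. But Kesten and Spitzer do \emph{not} prove such a statement; their Lemma~6 is precisely designed to avoid it. What they establish (and what the present paper cites via \cite[Lem~2]{GLN2}) is only the joint convergence of \emph{occupation times of fixed intervals} $T_{t_j}^n(a,b)\to\Lambda_{t_j}(a,b)=\int_a^b\tilde L_{t_j}(x)\,dx$ together with the walk --- a direct consequence of Donsker and the continuous mapping theorem. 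Pointwise or uniform convergence of rescaled local times of a general finite-variance aperiodic walk is a considerably harder result (Borodin, R\'ev\'esz, etc.) and would need a separate, explicit reference; without it your Riemann-sum passage to the limit is unsupported. The tail control is also a bit loose: Lemma~\ref{lemA_n} bounds the range of $S$ by $n^{1/2+\delta_1}$, which in the $x$-variable is $n^{\delta_1}\to\infty$, so you still need the standard tightness-of-the-range argument to get a fixed compact window with probability $1-\eta$.

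The paper instead follows the coarse-graining route of \cite[Lem~6]{KestenSpitzer} and \cite[Lem~6]{GLN2}. One partitions space into blocks $[\ell\tau,(\ell+1)\tau)$ for $|\ell|<M$, replaces the sum over $y$ in each block by the block occupation time $T(\ell,n)$, and splits the target quantity as $U+V+A$: a tail term $U$ (made small in probability by taking $M\tau$ large), a coarse term $V$ built from the $T(\ell,n)$ and the walk positions (which converges by the occupation-time version of Donsker), and an error $A$ controlled in $L^1$ via $|a^\beta-b^\beta|\le 2\beta|a-b|(a+b)^{\beta-1}$, Cauchy--Schwarz, and the moment bounds of \cite[Lem~1 and (2.26),(3.9)]{KestenSpitzer}. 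Letting $M\tau\to\infty$ and $\tau\to0$ recovers the integral $\int|\sum_j\theta_j^{(1)}\tilde L_{t_j}(x)|^\beta dx$. This argument is more hands-on than yours but self-contained: it needs only Donsker for the path and elementary moment estimates, not a strong local-time invariance principle. Your approach would be shorter and conceptually cleaner \emph{if} you can cite an appropriate uniform local-time convergence theorem for the class of walks in Lemma~\ref{LemmaCVdansDenDimension2}; otherwise the paper's coarse-graining is the safer route.
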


\begin{proof}
The proof is very similar to the one of \cite[Lem 6]{GLN2}, and to the proof of \cite[Lem 6]{KestenSpitzer} which deals with the first coordinate.
Throughout the proof, $C$ denotes a positive constant, which can vary from line to line, and can
depend on $(\theta_j^{(i)}, i=1,2;j=1,\dots,k)$.
%If $y\in\Z$, $\ell\in\Z$ and $\ell< t<\ell+1$, we define $N_t(y):=(\ell+1-t)N_k(y)+(t-\ell)N_{k+1}(y)$. Moreover,
For $n\in\NN$ and real numbers $a<b$ and $t>0$, we introduce the notation
\begin{equation*}
T_{t}^n(a, b):=\int_0^{\lfloor n t\rfloor/n} 1_{\{a\leq S_{\lfloor ns\rfloor}/\sqrt{n}< b\}} d s,
\end{equation*}
which is the occupation time of $[a,b)$ by $S_{\lfloor n .\rfloor}/\sqrt{n}$ up to time $\lfloor n t\rfloor/n$.
We consider $\tau>0$ and two real numbers $\mu_1$ and $\mu_2$. We define for $M>0$, $\ell\in\ZZ$ and $n\in\NN$,
\begin{eqnarray*}
U(\tau, M, n) & := &
    \mu_1 n^{-\delta\beta}\sum_{\tiny \begin{array}{c}y<-M\tau\sqrt{n}\\ \text{or } y\geq M\tau\sqrt{n}\end{array}}
    \left|\sum_{j=1}^k \theta_j^{(1)}N_{\lfloor n t_j\rfloor} (y)\right|^\beta,\\
T(\ell,n) & := &
    \sum_{j=1}^k \theta_j^{(1)}T_{t_j}^n(\ell\tau, (\ell+1)\tau)
    =\frac{1}{n}\sum_{j=1}^k \theta_j^{(1)}\sum_{\ell\tau\sqrt{n}\leq y < (\ell+1)\tau\sqrt{n}}
    N_{\lfloor  n t_j\rfloor }(y),\\
V(\tau, M,n) & := &
    \mu_1\tau^{1-\beta}\sum_{-M\leq \ell<M}|T(\ell, n)|^\beta
    +\mu_2 n^{-1/2}\sum_{j=1}^k \theta_j^{(2)}S_{\lfloor n t_j\rfloor}.
\end{eqnarray*}
We are interested in
\begin{eqnarray*}
A(\tau, M,n) & := &
    \frac{\mu_1}{n^{\delta \beta}}\sum_{y\in\Z}\left|\sum_{j=1}^k \theta_j^{(1)}N_{\lfloor n t_j \rfloor}(y)\right|^\beta
    +\mu_2\sum_{j=1}^k \theta_j^{(2)}\frac{S_{\lfloor n t_j\rfloor}}{\sqrt{n}}
    -U(\tau, M, n)-V(\tau, M,n)\\
& = &
    \frac{\mu_1}{n^{\delta\beta}}\sum_{-M\leq\ell<M}\sum_{\ell\tau\sqrt{n}\leq y<(\ell+1)\tau\sqrt{n}}
    %\left[
    \left|\sum_{j=1}^k\theta_j^{(1)}N_{\lfloor n t_j \rfloor}(y)\right|^\beta
    %-\frac{n^{\delta \beta}}{c(\ell,n)}
    -\mu_1\sum_{-M\leq\ell<M}
    \tau^{1-\beta}|T(\ell,n)|^\beta
    %\right]
    .
\end{eqnarray*}
{\it First step:}
We  define $c(\ell,n):=\#\{y\in\Z,\ \ell\tau\sqrt{n}\leq y<(\ell+1)\tau\sqrt{n}\}$.
As in \cite{KestenSpitzer}, we have for $\mu_1\neq 0$,
\begin{eqnarray}
&&\mu_1^{-1}A(\tau, M,n) \nonumber\\
& = &
    \sum_{-M\leq\ell<M}\sum_{\ell\tau\sqrt{n}\leq y<(\ell+1)\tau\sqrt{n}}
    n^{-\delta\beta}\left[\left|\sum_{j=1}^k\theta_j^{(1)}N_{\lfloor n t_j \rfloor}(y)\right|^\beta
    -n^{\beta}(\tau\sqrt{n})^{-\beta}|T(\ell,n)|^\beta
    \right]
\label{LigneduHaut}
\\
& + &
    \sum_{-M\leq \ell <M}\left[n^{\beta-\delta\beta}(\tau\sqrt{n})^{-\beta}c(\ell,n)-\tau^{1-\beta}\right]|T(\ell,n)|^\beta.
\label{LigneduBas}
\end{eqnarray}
As in \cite[p. 19]{KestenSpitzer}, the right hand side of \eqref{LigneduBas} tends to $0$ in probability as $n\to+\infty$. Then we just have to study \eqref{LigneduHaut}. To this aim, we use the inequality suggested by \cite{KestenSpitzer}, that is
\begin{equation*}
\forall (a,b)\in\RR_+^2,\qquad |a^\beta-b^\beta|\leq \beta|a-b|(a^{\beta-1}+b^{\beta-1})
\leq 2\beta|a-b|(a+b)^{\beta-1}
\end{equation*}
since $\beta>1$. We define $T'(\ell,n)$ by the same formula as $T(\ell,n)$ where we replace each $\theta_j^{(1)}$ by   $|\theta_j^{(1)}|$. We consider, for $\ell\tau\sqrt{n}\leq y<(\ell+1)\tau\sqrt{n}$,
\begin{eqnarray}
&&
\EE\left(\left|
    \left|\sum_{j=1}^k\theta_j^{(1)}N_{\lfloor n t_j \rfloor}(y)\right|^\beta
    -n^{\beta}(\tau\sqrt{n})^{-\beta}|T(\ell,n)|^\beta
    \right|\right)
\label{EqEsperanceduHaut}
\\
& \leq &
    2\beta
    \EE\left(\left|
    \left|\sum_{j=1}^k\theta_j^{(1)}N_{\lfloor n t_j \rfloor}(y)\right|
    -\frac{\sqrt{n}}{\tau}|T(\ell,n)|
    \right|
.
    \left|
    \sum_{j=1}^k|\theta_j^{(1)}|N_{\lfloor n t_j \rfloor}(y)
    +\frac{\sqrt{n}}{\tau}T'(\ell,n)
    \right|^{\beta-1}\right)
\nonumber\\
& \leq &
    2\beta
    \EE\left(
    \left|\sum_{j=1}^k\theta_j^{(1)}N_{\lfloor n t_j \rfloor}(y)
    -\frac{\sqrt{n}}{\tau}T(\ell,n)
    \right|^2\right)^{1/2}
    \label{eqHolderavecP}
    \\
&&
\times
    \EE\left(
    \left|
    \sum_{j=1}^k|\theta_j^{(1)}|N_{\lfloor n t_j \rfloor}(y)
    +\frac{\sqrt{n}}{\tau}T'(\ell,n)
    \right|^{2(\beta-1)}
    \right)^{1/2}
    \label{eqHolderavecQ}
\end{eqnarray}
by the Cauchy-Schwarz inequality
and by the second triangular inequality in \eqref{eqHolderavecP}.
%with $p:=\frac{2}{3-\beta}>1$ and $q:=\frac{2}{\beta-1}>1$ since $\beta\in(1,2)$.
In the following RHS will stand for right hand side.
We have by \cite{KestenSpitzer} equations (3.9) and (2.26),
\begin{equation}\label{InegHolderTerme1}
\text{RHS of } \eqref{eqHolderavecP}\leq (C_1 \tau n)^{1/2},
\end{equation}
where $C_1$ is a constant, which is finite since $gcd\{k\ :\ \PP(S_1=k)>0\}=1$.
Moreover, setting $a(\ell,n):=\tau\ell\sqrt{n}$,
by the H\"{o}lder inequality and \cite[p. 346]{GLN2}, we have
\begin{eqnarray}
[\text{RHS of }\eqref{eqHolderavecQ}]^{\frac{2}{\beta-1}}
& \leq &
    \EE\left(
    \left|
    \sum_{j=1}^k|\theta_j^{(1)}|N_{\lfloor n t_j \rfloor}(y)
    +\frac{\sqrt{n}}{\tau}T'(\ell,n)
    \right|^{2}
    \right)
\\
& \leq &
C\sum_{j=1}^k
\max_{a(\ell,n)\leq x<a(\ell+1,n)}
(
\EE(N_{\lfloor n t_j\rfloor}(x)^3)^{2/3}
+\EE(N_{\lfloor n t_j\rfloor}(y)^3)^{2/3}
)
\nonumber\\
& \leq &
C \EE(N_{\lfloor n \max(t_1,\dots,t_k)\rfloor}(0)^3)^{2/3}
\leq
C n\label{InegHolderTerme2}
\end{eqnarray}
by \cite[Lem 1]{KestenSpitzer}. Combining \eqref{InegHolderTerme1} and \eqref{InegHolderTerme2}, we get
\begin{equation*}
\text{RHS of }\eqref{EqEsperanceduHaut}\leq C \tau^{\frac{1}{2}}n^{\frac{\beta}{2}}.
\end{equation*}
Hence,
\begin{equation*}
\EE(| \text{RHS of }\eqref{LigneduHaut}|)\leq C(2M+1) \tau^{\frac{3}{2}}.
\end{equation*}
As in \cite[p. 20]{KestenSpitzer}, for each $\eta>0$ we can take $M\tau$ so large that
\begin{equation}\label{ProbadeU}
\PP(U(M,n,\tau)\neq 0)\leq \eta
\end{equation}
and then $\tau$ so small that
\begin{equation}\label{InegEtaCarre}
\EE(| \text{RHS of }\eqref{LigneduHaut}|)\leq \eta^2/|\mu_1|.
\end{equation}
Hence, by \eqref{ProbadeU} and \eqref{InegEtaCarre}, and since the right hand side of \eqref{LigneduBas} tends to $0$ in probability as $n\to+\infty$, we get for $n$ large enough (even when $\mu_1=0$),
\begin{equation*}
\PP(|A(\tau, M,n)+U(\tau, M,n)|>3\eta)
\leq \PP(|A(\tau, M,n)|>3\eta)+\PP(U(M,n,\tau)\neq 0)
\leq 3\eta.
\end{equation*}

\noindent{\it Second step:} As in \cite[Lem 2]{GLN2}, we have
\begin{equation*}
(T_{t_j}^{(n)}(\ell\tau,(\ell+1)\tau), S_{\lfloor n t_j \rfloor}/\sqrt{n})_{j=1,\dots,k,\ell=-M,\dots,M}
\rightarrow
(\Lambda_{t_j}(\ell\tau,(\ell+1)\tau),\tilde B_{t_j})_{j=1,\dots,k,\ell=-M,\dots,M}
\end{equation*}
in distribution,
as $n\to+\infty$, where $\Lambda_t(a,b):=\int_a^b \tilde L_t(x) d x$ for $t>0$ and
$a<b$. Consequently, $(V(\tau,M,n))_n$ converges in distribution as $n\to+\infty$ to
\begin{equation*}
\overline{V}(\tau, M) :=
    \mu_1\tau^{1-\beta}\sum_{-M\leq \ell<M}\left|\sum_{j=1}^k \theta_j^{(1)}\Lambda_{t_j}(\ell\tau,(\ell+1)\tau)\right|^\beta
    +\mu_2\sum_{j=1}^k \theta_j^{(2)}\tilde B_{t_j}.
\end{equation*}
Since $\tilde L_t(.)$ is continuous with a compact support, %for every $A>0$
we get almost surely
\begin{equation*}
\overline{V}(\tau, M)\to_{M\tau\to +\infty,\tau\to 0}\mu_1\int_{-\infty}^{+\infty}
  \left|\sum_{j=1}^k \theta_j^{(1)}\tilde L_{t_j}(x)\right|^\beta d x
+\mu_2\sum_{j=1}^k \theta_j^{(2)}\tilde B_{t_j}=: \widehat{V}.
\end{equation*}
Hence by choosing adequate $M$ and $\tau$ we get for $n$ large enough
\begin{equation*}
\left|\EE\left[\exp\left(
    i\frac{\mu_1}{n^{\delta \beta}}\sum_{y\in\Z}\left|\sum_{j=1}^k \theta_j^{(1)}N_{\lfloor n t_j \rfloor}(y)\right|^\beta
    +i\mu_2\sum_{j=1}^k \theta_j^{(2)}\frac{S_{\lfloor n t_j\rfloor}}{\sqrt{n}}\right)
    \right]-\EE\exp(i\widehat{V})\right|\leq 11
\eta.
\end{equation*}
Since this is true for every $\mu_1\in\RR$, $\mu_2\in\RR$ and $\eta>0$, this proves Lemma \ref{LemmeCVjointe}.
\end{proof}

\noindent {\it Proof of Lemma \ref{LemmaCVFiniDimensionnelle}}.
Applying Lemma \ref{LemmeCVjointe}, we get the convergence of
$\psi_n(\theta_1,\dots,\theta_k)$ to the right hand side of
\eqref{eqCharacteriticFunctionFiniteDimensional} as $n\to+\infty$. This combined with
Lemma \ref{LemmaCharacteriticFunctionFiniteDimensional} and Lemma \ref{LienPsiPhi}
proves Lemma \ref{LemmaCVFiniDimensionnelle}.
\hfill$\Box$

\noindent {\it Proof of Lemma \ref{LemmaCVdansDenDimension2}}.
We now turn to the tightness. We know that $(\tilde D_{\lfloor n t\rfloor}/n^{\delta},\ t\geq 0)_n$ and
$(S_{\lfloor n t \rfloor}/\sqrt{n},\ t\geq 0)_n$ both converge in distribution in
$\mathcal{D}([0,+\infty),\RR)$ to continuous processes (respectively by
\cite{KestenSpitzer} and by the theorem of Donsker), and
the finite dimensional distributions of $(\tilde D_{\lfloor n t\rfloor}/n^{\delta}, S_{\lfloor n t \rfloor}/\sqrt{n})_{t\geq 0}$ converge to those of $(\tilde\Delta_t, \tilde B_t)_{t\geq 0}$ by Lemma
\ref{LemmaCVFiniDimensionnelle}, hence the distributions of $(\tilde D_{\lfloor n t\rfloor}/n^{\delta}, S_{\lfloor n t \rfloor}/\sqrt{n})_{t\geq 0}$ are tight in $\mathcal{D}([0,+\infty),\RR^2)$ (this is a consequence of \cite{Bil} Theorems 13.2 and 13.4, Corollary p.142 and inequalities (12.7) and (12.9)). This proves Lemma \ref{LemmaCVdansDenDimension2}.

{\bf Acknowledgements~:\/}

The authors thank Yves Derriennic for interesting discussions and
references.

%%%%%%%%%%%%%%%%%%%%%%%%%%%%%%%%%%%%%%

%%               BIBLIOGRAPHIE

%%%%%%%%%%%%%%%%%%%%%%%%%%%%%%%%%%%%%%

\end{document}